\numberwithin{equation}{section}
\theoremstyle{plain}
\newtheorem{theorem}{Theorem}
\newtheorem{corollary}[theorem]{Corollary}
\newtheorem{proposition}[theorem]{Proposition}
\newtheorem*{theorem*}{Theorem}
\newtheorem*{conjecture*}{Conjecture}
\theoremstyle{definition}
\newtheorem{remark}[theorem]{Remark}
\newtheorem*{definition}{Definition}
\newcommand{\CC}{{\mathbb{C}}}
\newcommand{\KK}{{\mathbb{K}}}
\newcommand{\HH}{{\mathbb{H}}}
\newcommand{\PP}{{\mathbb{P}}}
\newcommand{\QQ}{{\mathbb{Q}}}
\newcommand{\RR}{{\mathbb{R}}}
\newcommand{\ZZ}{{\mathbb{Z}}}
\newcommand{\E}[2]{{E_{#1}^{#2}}}
\newcommand{\F}[2]{{F_{#1}^{#2}}}
\def\E{{\mathcal E}}
\def\F{{\mathcal F}}
\def\O{{\mathcal O}}
\def\T{{\mathcal T}}
\def\p{\partial }
\def\ns{{\nabla}\hspace{-1.4mm}\raisebox{0.3mm}{\text{\footnotesize{\bf /}}}}
\begin{document}
\title{On rational Frobenius manifolds of rank three with symmetries}
\date{\today}
\author{Alexey Basalaev}
\address{National Research University Higher School of Economics, Vavilova 7, 117312 Moscow, Russia}
\email{aabasalaev@edu.hse.ru}
\address{Leibniz Universit\"at Hannover, Welfengarten 1, 30167 Hannover, Germany}
\email{basalaev@math.uni-hannover.de}
\author{Atsushi Takahashi}
\address{Department of Mathematics, Graduate School of Science, Osaka University, 
Toyonaka Osaka, 560-0043, Japan}
\email{takahashi@math.sci.osaka-u.ac.jp}
\begin{abstract}
  We study Frobenius manifolds of rank three and dimension one that are related to submanifolds of certain Frobenius manifolds arising in mirror symmetry of elliptic orbifolds. 
  We classify such Frobenius manifolds that are defined over an arbitrary field $\KK\subset \CC$ via the theory of modular forms. 
By an arithmetic property of an elliptic curve $\E_\tau$ defined over $\KK$ associated to such a Frobenius manifold, it is proved that there are only two such Frobenius manifolds defined over $\CC$ satisfying a certain symmetry assumption and thirteen Frobenius manifolds defined over $\QQ$ satisfying a weak symmetry assumption on the potential.
\end{abstract}
\maketitle
\section*{Introduction}
The notion of a Frobenius manifold was introduced by Boris Dubrovin in the 90s (cf.\ \cite{du:1}) as the mathematical axiomatization of a 2D topological conformal field theory. A special class of Frobenius manifolds is given by certain structures on the base space of the universal unfolding of a hypersurface singularity. These structures were introduced in the early 80s by Kyoji Saito  (cf.\ \cite{st:1} for an introduction to this theory) and called at that time Saito's flat structures. 

Actually, Saito found a richer structure than his flat structure, consisting of the filtered de Rham cohomology with the Gau\ss--Manin connection, higher residue pairings and a primitive form \cite{sa:1}. Unlike the general setting of a Frobenius manifold it has much more geometric data coming naturally from singularity theory. 
It is also now generalized as a so-called non-commutative Hodge theory by \cite{kkp} which will be a necessary tool to understand 
the classical mirror symmetry (isomorphism of Frobenius manifolds) via a Kontsevich's homological mirror symmetry.
It is a very important problem to study some arithmetic aspect of a Saito structure with a geometric origin such as singularity theory. 
However, it is quite difficult at this moment. Therefore we start our consideration from the larger setting of Frobenius manifolds.

After recalling some basic definitions and terminologies in Section~\ref{sec:preliminary}, we shall study a rational structure on the class of Frobenius manifolds of rank three and dimension one,
that is natural from the point of view of the theory of primitive forms. This class is obtained by applying the ${\rm GL}(2, \CC)$-action to the special solution of the WDVV equation that has a geometric origin. We study it in Section~\ref{sec:rank3FM}.
In particular, the ${\rm GL}(2, \CC)$-action can be thought of as a transformation changing one primitive form to another one.
More precisely, we shall define the Frobenius manifold $M^{(\tau_0,\omega_0)}$ of rank three and dimension one obtained by acting with
a certain element $A^{(\tau_0,\omega_0)} \in {\rm GL}(2,\CC)$ depending on $\tau_0 \in \HH$, $\omega_0 \in \CC \backslash \{0\}$ 
(see Subsection~\ref{choice})
on  the ``basic'' Frobenius manifold $M^\infty$ (see Proposition~\ref{specialsolution} for the definition of $M^\infty$). 
Let $E_k(\tau)$ be the $k$-th Eisenstein series, $E_2^*(\tau) := E_2(\tau) - \frac{3}{\pi {\rm Im}(\tau)}$ and $\E_{\tau_0}$ the elliptic curve with the modulus $\tau_0$. Then the first theorem of this paper is the following.
  
  \begin{theorem*}[Theorem~\ref{theorem1} in Section~\ref{sec:FMoverK}]
    Let $\KK\subset \CC$ be a field. Let $\tau_0\in\HH$ and $\omega_0\in\CC\backslash\{0\}$. 
    The following are equivalent$:$
    \begin{enumerate}
      \item
	The Frobenius manifold $M^{(\tau_0,\omega_0)}$ is defined over $\KK$.
      \item
	All the coefficients of $f^{(\tau_0,\omega_0)}(t)$ series expansion are in $\KK$.
      \item 
	We have
	\begin{equation*} 
	E_2^*(\tau_0)\in \KK\omega_0^2,
	\quad E_4(\tau_0)\in\KK\omega_0^4,
	\quad E_6(\tau_0) \in\KK\omega_0^6.
	\end{equation*}
      \item
	Let $\partial$ be the almost holomorphic derivative defined by \eqref{def of almost holomorphic derivative}. We have
	\begin{equation*} 
	-\frac{1}{24}E_2^*(\tau_0)\in \KK\omega_0^2,
	\quad -\frac{1}{24}\p E_2^*(\tau_0)\in\KK\omega_0^4,
	\quad -\frac{1}{24}\p^2E_2^*(\tau_0) \in\KK\omega_0^6.
	\end{equation*}
      \item
	We have
	\begin{equation*}
	E_2^*(\tau_0)\in \KK\omega_0^2, \quad \E_{\tau_0} \ \text{is defined over} \ \KK.
	\end{equation*}
    \end{enumerate}
  \end{theorem*}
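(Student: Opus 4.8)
The plan is to prove the four biconditionals $(1)\Leftrightarrow(2)$, $(2)\Leftrightarrow(4)$, $(3)\Leftrightarrow(4)$ and $(3)\Leftrightarrow(5)$, which together give all the asserted equivalences. For $(1)\Leftrightarrow(2)$: by the normal-form and rigidity statements of Section~\ref{sec:rank3FM}, a Frobenius manifold in our class is, once its flat metric and unit are put in standard form, determined by the non-cubic part of its potential, and the $\KK$-rational changes of flat coordinates that preserve this normalization do not affect whether that function has $\KK$-rational Taylor coefficients — the only remaining degree of freedom being the rescaling already encoded in $\omega_0$. Hence $M^{(\tau_0,\omega_0)}$ is defined over $\KK$ in the sense of Section~\ref{sec:preliminary} if and only if every coefficient of the series $f^{(\tau_0,\omega_0)}$ lies in $\KK$.

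For $(2)\Leftrightarrow(4)$ and $(3)\Leftrightarrow(4)$: by the construction of Subsection~\ref{choice}, $f^{(\tau_0,\omega_0)}$ is obtained from the $q$-expansion of the potential of $M^\infty$ (Proposition~\ref{specialsolution}) by applying $A^{(\tau_0,\omega_0)}$, which has the effect of replacing the operator $q\frac{d}{dq}$ by the almost holomorphic derivative $\partial$ of \eqref{def of almost holomorphic derivative}, evaluating at $\tau_0$, and rescaling by the power of $\omega_0$ prescribed by the weight. Thus the leading coefficients of $f^{(\tau_0,\omega_0)}$ are, up to the normalization factor $-\tfrac1{24}$, exactly $E_2^*(\tau_0)\omega_0^{-2}$, $\partial E_2^*(\tau_0)\omega_0^{-4}$, $\partial^2 E_2^*(\tau_0)\omega_0^{-6}$, while every coefficient is a weighted-homogeneous polynomial with rational coefficients in these three quantities; since $\QQ\subseteq\KK$, this yields $(2)\Leftrightarrow(4)$. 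The Ramanujan--Serre identities $\partial E_2^*=\tfrac1{12}((E_2^*)^2-E_4)$ and $\partial E_4=\tfrac13(E_2^*E_4-E_6)$ then show that $\partial^2 E_2^*$ is a weighted-homogeneous polynomial of weight six in $E_2^*,E_4,E_6$ in which $E_6$ occurs with nonzero rational coefficient, so that over $\QQ[E_2^*]$ the passage between $(E_2^*,\partial E_2^*,\partial^2 E_2^*)$ and $(E_2^*,E_4,E_6)$ is an invertible, weight-preserving, $\QQ$-rational change of generators; as the power of $\omega_0$ tracks the weight, this gives $(4)\Leftrightarrow(3)$.

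For $(3)\Leftrightarrow(5)$: the clause $E_2^*(\tau_0)\in\KK\omega_0^2$ occurs in both, so it remains to see that $E_4(\tau_0)\in\KK\omega_0^4$ together with $E_6(\tau_0)\in\KK\omega_0^6$ is equivalent to $\E_{\tau_0}$ being defined over $\KK$. This follows by unwinding the definition of the elliptic curve $\E_{\tau_0}$ associated to $M^{(\tau_0,\omega_0)}$: in the period normalization built into $M^{(\tau_0,\omega_0)}$ its Weierstrass invariants $g_2,g_3$ are nonzero rational multiples of $E_4(\tau_0)\omega_0^{-4}$ and $E_6(\tau_0)\omega_0^{-6}$, and $\E_{\tau_0}$ is defined over $\KK$ exactly when the Weierstrass equation $y^2=4x^3-g_2x-g_3$ can be taken with $g_2,g_3\in\KK$. (As a consistency check, $(3)$ forces $j(\tau_0)=1728\,E_4(\tau_0)^3/(E_4(\tau_0)^3-E_6(\tau_0)^2)\in\KK$, the weight-zero combination in which the powers of $\omega_0$ cancel.)

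I expect the genuine work to lie in the first half of the second step: carrying out the $\mathrm{GL}(2,\CC)$-action $A^{(\tau_0,\omega_0)}$ on $M^\infty$ explicitly and recognizing the resulting coefficients of $f^{(\tau_0,\omega_0)}$ as the $\partial$-derivatives of $E_2^*$ at $\tau_0$ (equivalently, as weighted-homogeneous $\QQ$-polynomials in $E_2^*(\tau_0),E_4(\tau_0),E_6(\tau_0)$) requires both the precise description of $M^\infty$ and the interplay between the modular action and the almost holomorphic derivative. By contrast the first step is immediate from the earlier normal-form results, the modular identities used in the second step are classical, and the last step is essentially a matter of definitions.
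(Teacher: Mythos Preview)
Your treatment of the chain $(1)\Leftrightarrow(2)\Leftrightarrow(4)\Leftrightarrow(3)$ runs along the same lines as the paper's: both rest on the identities
\[
c_i(\tau_0,\omega_0)=-\frac{1}{24\,\omega_0^{2i+2}}\,\partial^i E_2^*(\tau_0)\qquad(i=0,1,2),
\]
combined with the recursion~\eqref{eq:recursion} for the higher $c_n$ and with the Ramanujan relations expressing $\partial E_2^*,\partial^2E_2^*$ as weighted $\QQ$-polynomials in $E_2^*,E_4,E_6$. The paper obtains the displayed identities by directly expanding $f^{(\tau_0,\omega_0)}(t)$ at $t=0$ from the definition of the $A^{(\tau_0,\omega_0)}$-action; your statement that this action ``replaces $q\,d/dq$ by the almost holomorphic derivative'' is a correct summary of \emph{why} these identities come out as they do, but it is not itself a proof---the explicit computation the paper performs (or an equivalent one) is still what makes the step rigorous.

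There is, however, a genuine gap in your argument for $(5)\Rightarrow(3)$. You identify ``$\E_{\tau_0}$ is defined over $\KK$'' with the condition that the \emph{particular} $\omega_0$-normalized Weierstrass model has invariants in $\KK$. But by the definition in Subsection~\ref{sec:elliptic curve} the phrase only asks that \emph{some} $\CC$-isomorphic model have $g_2,g_3\in\KK$, i.e.\ that there exist $\lambda\in\CC^*$ with $\lambda^4E_4(\tau_0),\lambda^6E_6(\tau_0)\in\KK$; nothing ties $\lambda$ to $\omega_0$. The implication in fact fails: with $\KK=\QQ$, $\tau_0=\rho$, $\omega_0=1$ one has $E_2^*(\rho)=0\in\QQ$ and $\E_\rho$ admits the $\QQ$-model $y^2=4x^3+1$, so (v) holds, while $E_6(\rho)/\omega_0^6=E_6(\rho)$ as given in~\eqref{eq:rho} is not rational, so (iii) fails. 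Your direction $(3)\Rightarrow(5)$ is fine. (The paper's own proof, incidentally, does not address condition~(v) at all, so this difficulty is not resolved there either.)
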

We shall also give two natural examples over $\QQ$ in Subsection~\ref{examples}. 
  In what follows we translate some properties of the elliptic curve $\E_{\tau_0}$ into special properties of the Frobenius manifold $M^{(\tau_0, \omega_0)}$. Considering the ${\rm SL}(2,\RR)$-action on $M^{(\tau_0,\omega_0)}$ we define the property of the Frobenius manifold $M^{(\tau_0,\omega_0)}$ to be ``symmetric'' and ``weakly symmetric'' (Definition~\ref{defn:symmetry}). We relate these properties to special properties of the elliptic curve $\E_{\tau_0}$ in the next theorem.
  \begin{theorem*}[Theorem~\ref{theorem2} in Section~\ref{sec:Symmetries}]
    Let $\tau_0 \in \HH$ and $\omega_0 \in \CC \backslash \{0\}$.
    \begin{enumerate}
      \item The Frobenius manifold $M^{(\tau_0, \omega_0)}$ has a symmetry if and only if $\tau_0$ is in the $\rm SL(2, \ZZ)$ orbit of $\sqrt{-1}$ or $\rho$.
      \item The Frobenius manifold $M^{(\tau_0, \omega_0)}$ defined over $\QQ$ has a weak symmetry if and only if $\tau_0$ is from the list given in Corollary~\ref{cor:13EllipticModulus}.
    \end{enumerate}
  \end{theorem*}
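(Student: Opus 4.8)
The plan is to translate the properties ``symmetry'' and ``weak symmetry'' of $M^{(\tau_0,\omega_0)}$ into purely group-theoretic/arithmetic conditions on the point $\tau_0\in\HH$, and then to invoke two classical inputs: the location of the elliptic points of ${\rm PSL}(2,\ZZ)$ for part (i), and the classification of imaginary quadratic orders of class number one for part (ii). The first, and crucial, step is to read off from Definition~\ref{defn:symmetry} what the ${\rm SL}(2,\RR)$-action does. That action moves $\tau_0$ by the usual Möbius action and $\omega_0$ by the corresponding automorphy factor; since rescaling $\omega_0$ only rescales the potential and hence produces an isomorphic Frobenius manifold, the induced action on isomorphism classes factors through the ${\rm SL}(2,\ZZ)$-action on $\tau_0$. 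Unpacking the definition, $M^{(\tau_0,\omega_0)}$ has a symmetry precisely when the stabilizer of $\tau_0$ in ${\rm PSL}(2,\ZZ)$, which is canonically $\mathrm{Aut}(\E_{\tau_0})/\{\pm 1\}$, is nontrivial, while $M^{(\tau_0,\omega_0)}$ has a weak symmetry precisely when $\tau_0$ is fixed by a nontrivial non-scalar element of ${\rm GL}_2^{+}(\QQ)$ — equivalently, $\tau_0$ is a CM point, equivalently $\E_{\tau_0}$ has complex multiplication.

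Granting this dictionary, part (i) is immediate: a nontrivial element of ${\rm PSL}(2,\ZZ)$ with a fixed point in $\HH$ is elliptic, and the elliptic points of ${\rm PSL}(2,\ZZ)$ are exactly the ${\rm SL}(2,\ZZ)$-orbit of $\sqrt{-1}$, with stabilizer of order $2$, and the ${\rm SL}(2,\ZZ)$-orbit of $\rho$, with stabilizer of order $3$. One additionally checks that at these two moduli the relevant element of ${\rm SL}(2,\ZZ)$ acts on $M^{(\tau_0,\omega_0)}$ as a genuine automorphism for every $\omega_0$: the change of $\omega_0$ by the automorphy factor, which is a root of unity at $\sqrt{-1}$ and at $\rho$, is absorbed by the scaling freedom of the Frobenius potential, so that no constraint on $\omega_0$ appears. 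This proves (i).

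For part (ii), suppose first that $M^{(\tau_0,\omega_0)}$ is defined over $\QQ$ and has a weak symmetry. By the dictionary $\tau_0$ is a CM point, and by Theorem~\ref{theorem1} the elliptic curve $\E_{\tau_0}$ is defined over $\QQ$, so $j(\tau_0)\in\QQ$. A CM elliptic curve with rational $j$-invariant has the class number of its CM order equal to $1$, since that $j$-invariant generates the associated ring class field; by the Heegner--Baker--Stark theorem there are exactly thirteen imaginary quadratic orders of class number one (the nine maximal orders of discriminants $-3,-4,-7,-8,-11,-19,-43,-67,-163$ and the four non-maximal orders of discriminants $-12,-16,-27,-28$), hence exactly thirteen possible values of $j(\tau_0)$ and thirteen ${\rm SL}(2,\ZZ)$-orbits of $\tau_0$, which are the moduli listed in Corollary~\ref{cor:13EllipticModulus}. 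Conversely, for each such $\tau_0$ one has $j(\tau_0)\in\QQ$, so $\E_{\tau_0}$ has a model over $\QQ$; choosing $\omega_0$ so that $E_4(\tau_0)\in\QQ\omega_0^4$ and $E_6(\tau_0)\in\QQ\omega_0^6$, the classical algebraicity of $E_2^{*}$ at CM points (commensurability of the quasi-period with the period, so that $E_2^{*}(\tau_0)\in\QQ\omega_0^2$) together with Theorem~\ref{theorem1} shows $M^{(\tau_0,\omega_0)}$ is defined over $\QQ$, while the complex multiplication of $\E_{\tau_0}$ supplies the non-scalar element of ${\rm GL}_2^{+}(\QQ)$ fixing $\tau_0$ that witnesses the weak symmetry.

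The main obstacle is the first step: pinning down, from Definition~\ref{defn:symmetry}, the exact group whose nontrivial stabilizer of $\tau_0$ encodes a symmetry (namely ${\rm PSL}(2,\ZZ)$) and a weak symmetry (namely ${\rm GL}_2^{+}(\QQ)$ modulo scalars), and checking that the $\omega_0$-dependence really does drop out in both cases. Once this is in place, (i) is the standard elliptic-point computation and (ii) reduces to the class-number-one input together with two routine verifications — that a compatible $\omega_0$ exists at each of the thirteen CM moduli (the $E_2^{*}$-algebraicity above), and that passing to a different $\QQ$-model of $\E_{\tau_0}$, i.e.\ a quadratic twist, does not affect the conclusion, since twisting $\E_{\tau_0}$ merely rescales $\omega_0$.
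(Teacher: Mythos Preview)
Your overall strategy matches the paper's --- reduce (i) to the elliptic points of $\mathrm{SL}(2,\ZZ)$ and (ii) to the class-number-one classification of CM elliptic curves over $\QQ$ --- but the ``dictionary'' step, which you rightly identify as the crux, does not work as you sketch it.

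The claim that ``rescaling $\omega_0$ only rescales the potential and hence produces an isomorphic Frobenius manifold'' is false: by Proposition~\ref{proposition: SL2Z action}, $M^{(\tau_0,\omega_0)}\cong M^{(\tau_0,\omega_0')}$ forces $\omega_0'^{\,k}=\omega_0^{\,k}$ with $k\in\{2,4,6\}$ depending on $\tau_0$. In the paper's argument this $\omega_0$-constraint is precisely what drives (i): a symmetry yields, via Proposition~\ref{proposition: SL2Z action}, an element of $\mathrm{SL}(2,\ZZ)$ fixing $\tau_0$ together with $(c\tau_0+d)^k=1$; for generic $\tau_0$ one has $k=2$, which admits no nontrivial integer solution with $\tau_0\in\HH$, so one is forced into the orbits of $\sqrt{-1}$ or $\rho$. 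Your reduction to ``nontrivial stabilizer of $\tau_0$ in $\mathrm{PSL}(2,\ZZ)$'' reaches the same endpoint but short-circuits this mechanism by an incorrect premise. Likewise, your identification of weak symmetry with a nontrivial $\mathrm{GL}_2^+(\QQ)$-stabilizer is neither derived from Definition~\ref{defn:symmetry} (which uses $\mathrm{SL}(2,\RR)$) nor the paper's route: the paper again applies Proposition~\ref{proposition: SL2Z action} to obtain an element of $\mathrm{SL}(2,\ZZ)$ fixing $\tau_0$, hence a quadratic equation with integer coefficients, hence CM. For the converse of (ii), the paper sharpens your ``classical algebraicity of $E_2^*$ at CM points'' to the explicit statement of Proposition~\ref{prop:E2E4E6}, namely $E_2^*(\tau_0)E_4(\tau_0)/E_6(\tau_0)\in\QQ(j(\tau_0))$, which is what exhibits a compatible $\omega_0$.
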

Finally, some useful data are given in the Appendix.
  
\begin{sloppypar}

{\bf Acknowledgements}.\  
We are grateful to Wolfgang Ebeling for fruitful discussions.
The second named author is supported by JSPS KAKENHI Grant Number 24684005.
\end{sloppypar}
\section{Preliminaries}\label{sec:preliminary}
\subsection{Frobenius manifolds}
We give some basic properties of a Frobenius manifold \cite{du:1}. Let us recall the equivalent definition taken from Saito-Takahashi \cite{st:1}.
\begin{definition}
Let $M=(M,\O_{M})$ be a connected complex manifold of dimension $\mu$
whose holomorphic tangent sheaf and cotangent sheaf 
are denoted by $\T_{M}$ and $\Omega_M^1$ respectively
and let $d$ be a complex number.
A {\it Frobenius structure of rank $\mu$ and dimension $d$ on M} is a tuple $(\eta, \circ , e,E)$, where $\eta$ is a non-degenerate $\O_{M}$-symmetric bilinear form on $\T_{M}$, $\circ $ is an $\O_{M}$-bilinear product on $\T_{M}$, defining an associative and commutative $\O_{M}$-algebra structure with a unit $e$, and $E$ is a holomorphic vector field on $M$, called the Euler vector field, which are subject to the following axioms:
\begin{enumerate}
\item The product $\circ$ is self-adjoint with respect to $\eta$: that is,
\begin{equation*}
\eta(\delta\circ\delta',\delta'')=\eta(\delta,\delta'\circ\delta''),\quad
\delta,\delta',\delta''\in\T_M. 
\end{equation*} 
\item The {\rm Levi}--{\rm Civita} connection $\ns:\T_M\otimes_{\O_M}\T_M\to\T_M$ 
with respect to $\eta$ is flat: that is, 
\begin{equation*}
[\ns_\delta,\ns_{\delta'}]=\ns_{[\delta,\delta']},\quad \delta,\delta'\in\T_M.
\end{equation*}
\item The tensor $C:\T_M\otimes_{\O_M}\T_M\to \T_M$  defined by 
$C_\delta\delta':=\delta\circ\delta'$, $(\delta,\delta'\in\T_M)$ is flat: that is,
\begin{equation*}
\ns C=0.
\end{equation*} 
\item The unit element $e$ of the $\circ $-algebra is a 
$\ns$-flat holomorphic vector field: that is,
\begin{equation*}
\ns e=0.
\end{equation*} 
\item The metric $\eta$ and the product $\circ$ are homogeneous of degree 
$2-d$ ($d\in\CC$) and $1$ respectively with respect to the Lie derivative 
$Lie_{E}$ of the {\rm Euler} vector field $E$: that is,
\begin{equation*}
Lie_E(\eta)=(2-d)\eta,\quad Lie_E(\circ)=\circ.
\end{equation*}
\end{enumerate}
A manifold $M$ equipped with a Frobenius structure $(\eta, \circ , e,E)$ is called a {\it Frobenius manifold}.
\end{definition}
From now on in this section, we shall always denote by $M$ a Frobenius manifold.
We expose some basic properties of Frobenius manifolds without their proofs.
Let us consider the space of horizontal sections of the connection $\ns$:
\[
\T_M^f:=\{\delta\in\T_M~|~\ns_{\delta'}\delta=0\text{ for all }\delta'\in\T_M\}
\]
which is a local system of rank $\mu $ on $M$ such that the metric $\eta$ 
takes a constant value on $\T_M^f$. Namely, we have 
\begin{equation*}
\eta (\delta,\delta')\in\CC,\quad  \delta,\delta' \in \T_M^f.
\end{equation*}
\begin{proposition}\label{prop:flat coordinates}
At each point of $M$, there exist local coordinates $(t_1,\dots,t_{\mu})$,  called flat coordinates, such that $e=\p_1$, $\T_M^f$ is spanned by $\p_1,\dots, \p_{\mu}$ and $\eta(\p_i,\p_j)\in\CC$ for all $i,j=1,\dots, \mu$, where we denote $\p/\p t_i$ by $\p_i$. 
\end{proposition}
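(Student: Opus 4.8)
The plan is to construct the flat coordinates directly from the local system $\T_M^f$ and the flatness of $\ns$. First I would invoke axiom (ii): since the Levi--Civita connection $\ns$ of $\eta$ is flat, around any point $p \in M$ there is a simply connected neighbourhood $U$ on which the local system $\T_M^f|_U$ is trivial, hence admits a basis of $\ns$-flat vector fields $\delta_1, \dots, \delta_\mu$. By axiom (iv) the unit field $e$ is itself $\ns$-flat, so after a constant linear change of basis (which preserves flatness) we may arrange $\delta_1 = e$. Since $\eta$ is $\ns$-parallel, $\eta(\delta_i, \delta_j)$ is a locally constant function, i.e.\ a complex constant on $U$; this is exactly the asserted property $\eta(\p_i, \p_j) \in \CC$ once we identify $\delta_i$ with coordinate vector fields.

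The remaining point is to realize the $\delta_i$ as $\p/\p t_i$ for an honest coordinate system. For this I would pass to the dual picture: the flatness of $\ns$ on $\T_M$ gives flatness of the induced connection on $\Omega_M^1$, so locally there is a basis of closed (indeed $\ns$-flat) $1$-forms $\omega_1, \dots, \omega_\mu$ dual to $\delta_1, \dots, \delta_\mu$. Shrinking $U$ if necessary so that it is a polydisc, each closed form $\omega_i$ is exact, $\omega_i = dt_i$ for some holomorphic function $t_i$ on $U$, unique up to an additive constant. The Jacobian of $(t_1, \dots, t_\mu)$ at $p$ is invertible because $(\omega_i)$ is a coframe, so by the holomorphic inverse function theorem $(t_1, \dots, t_\mu)$ is a local coordinate system near $p$, and by construction $\p/\p t_i = \delta_i$. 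In particular $e = \delta_1 = \p_1$, the $\p_i$ span $\T_M^f$, and $\eta(\p_i, \p_j) = \eta(\delta_i, \delta_j) \in \CC$, as required. One may further normalize by an affine change so that $\eta(\p_i, \p_j)$ takes a convenient constant form, though the statement as given does not demand this.

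The only genuinely non-formal ingredient is the passage from "flat connection" to "local frame of parallel sections", i.e.\ the classical fact that a flat holomorphic connection on a vector bundle is locally isomorphic to the trivial connection; this is where the hypothesis is really used, and it is standard (Frobenius integrability / the holomorphic Poincar\'e lemma). Everything else --- choosing the first basis vector to be $e$ using axiom (iv), constancy of $\eta$ on flat fields, and integrating a flat coframe to coordinates on a polydisc --- is routine. So I expect no serious obstacle: the proof is essentially an assembly of standard facts about flat connections on holomorphic vector bundles, with axioms (ii) and (iv) of the Frobenius structure doing the necessary work.
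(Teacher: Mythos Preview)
Your argument is correct and is the standard one. Note, however, that the paper does not actually prove this proposition: immediately before stating it the authors write ``We expose some basic properties of Frobenius manifolds without their proofs'', and indeed Proposition~\ref{prop:flat coordinates} is followed directly by the next proposition with no intervening proof. So there is nothing in the paper to compare your argument against.

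For completeness, your reasoning is sound. A minor remark: in the step where you pass from the flat frame $(\delta_i)$ to coordinates, you go via the dual coframe and the Poincar\'e lemma. This is fine, but you could shorten it by observing directly that the $\delta_i$ pairwise commute---since $\ns$ is torsion-free, $[\delta_i,\delta_j]=\ns_{\delta_i}\delta_j-\ns_{\delta_j}\delta_i=0$---and then invoke the classical fact that a frame of commuting vector fields is locally a coordinate frame. Either route works and they are essentially equivalent; the torsion-freeness of the Levi--Civita connection is what makes the flat $1$-forms closed in your version, and what makes the flat vector fields commute in the alternative.
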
 
The axiom $\ns C=0$ implies the following:
\begin{proposition}\label{prop:potential}
At each point of $M$, there exists a local holomorphic function $\F$, called Frobenius potential, satisfying
\begin{equation*}
\eta(\p_i\circ\p_j,\p_k)=\eta(\p_i,\p_j\circ\p_k)=\p_i\p_j\p_k \F,
\quad i,j,k=1,\dots,\mu,
\end{equation*}
for any system of flat coordinates. In particular, one has
\begin{equation*}
\eta_{ij}:=\eta(\p_i,\p_j)=\p_1\p_i\p_j \F. 
\end{equation*}
\end{proposition}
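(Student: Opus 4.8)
\medskip
\noindent\textbf{Proof proposal.}
The plan is to work throughout in a system of flat coordinates $(t_1,\dots,t_\mu)$ centered at the given point, whose existence is provided by Proposition~\ref{prop:flat coordinates}, shrinking $M$ to a polydisc neighborhood so that the holomorphic Poincar\'e lemma is available. First I would record the structure constants: write $\p_i\circ\p_j=\sum_k c_{ij}^k\,\p_k$ and put $c_{ijk}:=\eta(\p_i\circ\p_j,\p_k)=\sum_l c_{ij}^l\,\eta_{lk}$, noting that the $\eta_{lk}\in\CC$ are constants. Axiom~(i) is exactly $\eta(\p_i\circ\p_j,\p_k)=\eta(\p_i,\p_j\circ\p_k)$, the first asserted equality; combined with the commutativity of $\circ$ and the symmetry of $\eta$ it shows that $c_{ijk}$ is totally symmetric in $i,j,k$.

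The next and crucial step is to translate axiom~(iii) into a differential identity. In flat coordinates the Christoffel symbols of $\ns$ vanish, and unwinding $\ns C=0$ there yields the integrability (``potentiality'') condition $\p_l c_{ij}^k=\p_i c_{lj}^k$, equivalently $\p_l c_{ijk}=\p_i c_{ljk}$. Granting this, I would construct $\F$ by three successive applications of the Poincar\'e lemma. For each fixed pair $(j,k)$ the $1$-form $\sum_i c_{ijk}\,dt^i$ is closed by the identity just obtained, hence equals $dH_{jk}$ for some holomorphic $H_{jk}$; setting $G_{jk}:=\tfrac12(H_{jk}+H_{kj})$ gives $G_{jk}=G_{kj}$ and $\p_i G_{jk}=\tfrac12(c_{ijk}+c_{ikj})=c_{ijk}$. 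Since $\p_i G_{jk}=c_{ijk}=c_{jik}=\p_j G_{ik}$, the $1$-form $\sum_j G_{jk}\,dt^j$ is closed, so $G_{jk}=\p_j D_k$ for holomorphic $D_k$; and because $G_{jk}=G_{kj}$ we have $\p_j D_k=\p_k D_j$, so $\sum_k D_k\,dt^k=d\F$ for some holomorphic $\F$. Then $\p_i\p_j\p_k\F=\p_i\p_j D_k=\p_i G_{jk}=c_{ijk}=\eta(\p_i\circ\p_j,\p_k)$. Finally, since $e=\p_1$ by Proposition~\ref{prop:flat coordinates} and $e$ is the unit of $\circ$, one gets $\eta_{ij}=\eta(\p_i,\p_j)=\eta(\p_1\circ\p_i,\p_j)=c_{1ij}=\p_1\p_i\p_j\F$. (These identities then hold in every system of flat coordinates, since two such systems differ by an affine change of variables under which both sides of the displayed relations transform in the same way.)

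I expect the only point that genuinely needs care to be the second step: making precise how the invariant statement ``$\ns C=0$'' reads in flat coordinates --- i.e.\ which extension of $\ns$ to the tensor $C$ is meant, and why it gives the symmetry $\p_l c_{ijk}=\p_i c_{ljk}$ rather than the (false) constancy of all $c_{ijk}$. Once that identity is in hand, everything else is the routine holomorphic Poincar\'e lemma, and the only residual ambiguity in $\F$ is an additive polynomial of degree at most two, which does not affect the third derivatives appearing in the statement.
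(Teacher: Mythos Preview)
The paper does not actually prove this proposition: immediately before stating it the authors write ``We expose some basic properties of Frobenius manifolds without their proofs,'' and the proposition is introduced only with the remark ``The axiom $\ns C=0$ implies the following.'' So there is nothing to compare your argument against; you have supplied a proof where the paper has chosen to omit one.

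Your argument is the standard one and is correct. Regarding the point you flag as needing care: in the Saito--Takahashi formulation cited here, the tensor $C$ is to be read as the $\mathrm{End}(\T_M)$-valued $1$-form $\delta\mapsto C_\delta=(\delta\circ{-})$, and $\ns C$ denotes its exterior covariant derivative, a $2$-form. In flat coordinates this vanishing is precisely the symmetry $\p_l c_{ij}^k=\p_i c_{lj}^k$ that you use, not the constancy of the $c_{ij}^k$. With that reading fixed, your three successive applications of the holomorphic Poincar\'e lemma go through exactly as written, and the final identification $\eta_{ij}=\p_1\p_i\p_j\F$ via $e=\p_1$ is immediate.
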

The product structure on $\T_{M}$ is 
described locally by ${\mathcal F}$ as 
\begin{equation*} \label{eq:prod}
{\partial_{i}} \circ 
{\partial_{j}}  = \sum_{k = 1}^{\mu}
c_{ij}^{k} {\partial_{k}}  
\quad i,j = 1, \cdots, \mu,
\end{equation*}
\begin{equation*} \label{eq:str-const}
c_{ij}^{k} :=\sum_{l=1}^{\mu} \eta^{kl} 
\partial_{i} \partial_{j} \partial_{l} \F, \quad 
(\eta^{ij}) = (\eta_{ij})^{-1}, \quad
i,j,k = 1, \cdots, \mu.
\end{equation*}
\begin{definition}
Let $\KK\subset\CC $ be a field. 
We say that a Frobenius manifold $M$ is {\it defined over $\KK$} 
if there exist flat coordinates $t_1,\dots, t_{\mu}$ such that the Frobenius potential $\F$ belongs to $\KK\{t_1,\dots, t_{\mu}\}$ and is defined at the point $t_1 = \dots = t_\mu = 0$.
\end{definition}
\subsection{Eisenstein series}
Throughout this paper, we denote by $\HH$ the complex upper half plane $\{\tau\in\CC~\vert~{\rm Im}(\tau)>0\}$.
Recall the following famous facts on Eisenstein series.
\begin{proposition}
Let $E_2(\tau)$, $E_4(\tau)$ and $E_6(\tau)$ be the Eisenstein series defined by 
\begin{subequations}
\begin{equation*}
E_2(\tau) := 1 - 24 \sum_{n=1}^\infty \sigma_1(n) q^n,
\end{equation*}
\begin{equation*}
E_4(\tau) := 1 + 240 \sum_{n=1}^\infty \sigma_3(n) q^n,
\end{equation*}
\begin{equation*}
E_6(\tau) := 1 - 504 \sum_{n=1}^\infty \sigma_5(n) q^n,
\end{equation*}
\end{subequations}
where $\sigma_k(n)=\displaystyle\sum_{d\vert n}d^k$ and $q = \exp(2 \pi \sqrt{-1} \tau)$.
\begin{enumerate}
\item
For any 
$
\begin{pmatrix}
a &b\\
c & d
\end{pmatrix}
\in {\rm SL}(2,\ZZ)
$, we have 
\begin{subequations}
\begin{equation}\label{eq:modularE2}
E_2(\tau)=\frac{1}{(c\tau+d)^2}E_2\left(\frac{a\tau+b}{c\tau+d}\right)-\frac{6c}{\pi\sqrt{-1}(c\tau+d)},
\end{equation}
\begin{equation}
E_4(\tau)=\frac{1}{(c\tau+d)^4}E_4\left(\frac{a\tau+b}{c\tau+d}\right),
\end{equation}
\begin{equation}
E_6(\tau)=\frac{1}{(c\tau+d)^6}E_6\left(\frac{a\tau+b}{c\tau+d}\right).
\end{equation}
\end{subequations}
\item
The derivatives of the Eisenstein series satisfy the following identities due to Ramanujan$:$
\begin{subequations}\label{eq:E_kDerivatives}
\begin{equation}
\frac{1}{2\pi \sqrt{-1}} \dfrac{d E_2(\tau)}{d\tau} = \frac{1}{12} \left( E_2(\tau)^2 - E_4(\tau) \right),
\end{equation}
\begin{equation}
\frac{1}{2\pi \sqrt{-1}} \dfrac{d E_4(\tau)}{d\tau} = \frac{1}{3} \left( E_2(\tau) E_4(\tau) - E_6(\tau) \right),
\end{equation}
\begin{equation}
\frac{1}{2\pi \sqrt{-1}} \dfrac{d E_6(\tau)}{d\tau}  = \frac{1}{2} \left( E_2(\tau) E_6(\tau) - E_4(\tau)^2 \right).
\end{equation}
\end{subequations}
\end{enumerate}
\end{proposition}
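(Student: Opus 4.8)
The plan is to prove the two parts in order, since the Ramanujan identities (ii) are most cleanly deduced once the transformation laws (i), and especially that of $E_2$, are in hand. Throughout I write $D := \frac{1}{2\pi\sqrt{-1}}\frac{d}{d\tau} = q\frac{d}{dq}$ for the normalized derivative appearing in \eqref{eq:E_kDerivatives}, and $\Lambda_\tau := \ZZ + \ZZ\tau$ for the lattice attached to $\tau\in\HH$.

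For part (i) I would pass to the lattice-sum presentation. For $k\geq 2$ set $G_{2k}(\tau) := \sum_{(m,n)\neq(0,0)}(m\tau+n)^{-2k}$, which converges absolutely and locally uniformly. Any $\gamma = \begin{pmatrix} a & b \\ c & d \end{pmatrix}\in{\rm SL}(2,\ZZ)$ acts on $\ZZ^2\setminus\{0\}$ bijectively by $(m,n)\mapsto(m,n)\gamma=(ma+nc,\,mb+nd)$, and $m(\gamma\tau)+n = (c\tau+d)^{-1}\bigl((ma+nc)\tau+(mb+nd)\bigr)$; substituting $\tau\mapsto\gamma\tau$ therefore yields the homogeneity $G_{2k}(\gamma\tau)=(c\tau+d)^{2k}G_{2k}(\tau)$, which is the asserted weight-$2k$ law for $k=2,3$. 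To match $G_4,G_6$ with the normalized series $E_4,E_6$ of the statement I would compute their Fourier expansions by differentiating the partial-fraction identity $\pi\cot(\pi z)=\frac1z+\sum_{n\geq 1}\bigl(\frac{1}{z+n}+\frac{1}{z-n}\bigr)$ an appropriate number of times and summing over the rows $m$ of the lattice; this produces exactly $2\zeta(2k)$ times the stated $q$-series, and dividing by $2\zeta(2k)$ gives $E_{2k}$ together with its transformation law.

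The transformation law \eqref{eq:modularE2} for $E_2$ is the delicate point, and I expect it to be the main obstacle: the naive sum $\sum'(m\tau+n)^{-2}$ is only conditionally convergent, and its value depends on the order of summation. I would resolve this by Hecke's trick, introducing the absolutely convergent family $G_2(\tau,s):=\sum_{(m,n)\neq(0,0)}(m\tau+n)^{-2}|m\tau+n|^{-2s}$ for ${\rm Re}(s)>0$, which obeys the exact covariance $G_2(\gamma\tau,s)=(c\tau+d)^2|c\tau+d|^{2s}G_2(\tau,s)$ by the same lattice bijection. Its analytic continuation to $s=0$ is not holomorphic: one has $G_2(\tau,0)=\frac{\pi^2}{3}E_2^*(\tau)=\frac{\pi^2}{3}E_2(\tau)-\frac{\pi}{{\rm Im}(\tau)}$. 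Inserting this into the covariance at $s=0$ and using ${\rm Im}(\gamma\tau)={\rm Im}(\tau)/|c\tau+d|^2$, the non-holomorphic term $-\pi/{\rm Im}(\tau)$ contributes, via $-(c\tau+d)^2+|c\tau+d|^2 = -2\sqrt{-1}\,c\,{\rm Im}(\tau)(c\tau+d)$, precisely the anomalous summand of \eqref{eq:modularE2}; the careful continuation of $G_2(\tau,s)$ and bookkeeping of this correction is the crux.

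For part (ii) I would exploit that $E_4,E_6$ are now genuinely modular of weights $4,6$. Define the Serre derivative $\vartheta_k f := Df - \frac{k}{12}E_2 f$. Using \eqref{eq:modularE2} one checks that if $f$ is modular of weight $k$ then $\vartheta_k f$ is modular of weight $k+2$: the inhomogeneous term produced by differentiating the factor $(c\tau+d)^k$ is cancelled exactly by the anomaly of $E_2$. Applying this to $E_4$ and $E_6$, and using that the spaces of modular forms of weights $6$ and $8$ are one-dimensional, spanned by $E_6$ and $E_4^2$, I get $\vartheta_4 E_4=\lambda E_6$ and $\vartheta_6 E_6=\mu E_4^2$; comparing the $q^0$ Fourier coefficients forces $\lambda=-\tfrac13$ and $\mu=-\tfrac12$, which rearrange into identities (b) and (c). For (a) I would instead verify directly from \eqref{eq:modularE2} that $DE_2-\frac{1}{12}E_2^2$ transforms with weight $4$—the two anomalous contributions again cancel—so it equals a constant times $E_4$, and matching constant terms gives $DE_2=\frac{1}{12}(E_2^2-E_4)$. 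This completes the three Ramanujan identities and hence the proposition.
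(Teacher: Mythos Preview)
The paper does not actually prove this proposition: it is introduced with the phrase ``Recall the following famous facts on Eisenstein series'' and stated without any argument, so there is no proof in the paper to compare against. Your proposal therefore supplies what the paper omits.

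Your argument is correct and is essentially the textbook route. For part~(i) the lattice-sum bijection gives the weight-$2k$ law for $k\ge 2$ immediately, and Hecke's trick is the standard way to handle the conditionally convergent weight-$2$ case; your identification of the non-holomorphic correction via $|c\tau+d|^2-(c\tau+d)^2=-2\sqrt{-1}\,c\,{\rm Im}(\tau)(c\tau+d)$ is exactly what produces the anomalous term in \eqref{eq:modularE2}. For part~(ii) the Serre derivative $\vartheta_k f=Df-\tfrac{k}{12}E_2 f$ combined with the one-dimensionality of $M_6$ and $M_8$ (and of $M_4$ for the $E_2$ identity) is again the standard and cleanest argument, and your constant-term comparisons yield the correct coefficients. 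One small point worth making explicit when you write it up: to invoke the dimension counts you should note that $\vartheta_4E_4$, $\vartheta_6E_6$, and $DE_2-\tfrac{1}{12}E_2^2$ are holomorphic at the cusp (clear from the $q$-expansions), so that they lie in the finite-dimensional spaces of holomorphic modular forms rather than merely transforming with the right weight.
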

We shall also consider the complex-valued real-analytic function $E_2^*(\tau)$ on $\HH$ defined by  
\begin{equation*}
E_2^*(\tau) := E_2(\tau) - \frac{3 }{\pi {\rm Im} (\tau)},
\end{equation*}
which is a so-called almost holomorphic modular form of weight two since we have the following. 
\begin{proposition}
We have 
\begin{equation*}
E_2^*(\tau)=\frac{1}{(c\tau+d)^2}E_2^*\left(\frac{a\tau+b}{c\tau+d}\right) \text{ for any } 
\begin{pmatrix}
a &b\\
c & d
\end{pmatrix}
\in {\rm SL}(2,\ZZ).
\end{equation*}
\end{proposition}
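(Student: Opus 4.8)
The plan is to deduce the transformation law for $E_2^*$ directly from the quasi-modularity \eqref{eq:modularE2} of $E_2$ recorded above, combined with the elementary behaviour of ${\rm Im}(\tau)$ under fractional linear transformations. Fix $\gamma=\begin{pmatrix} a & b \\ c & d\end{pmatrix}\in{\rm SL}(2,\ZZ)$ and write $\gamma\tau:=\frac{a\tau+b}{c\tau+d}$ for brevity. The first ingredient is the identity
\[
{\rm Im}(\gamma\tau)=\frac{{\rm Im}(\tau)}{|c\tau+d|^2},
\]
which one obtains by computing $\gamma\tau-\overline{\gamma\tau}$, clearing denominators, and using $ad-bc=1$ to see that the numerator collapses to $\tau-\bar\tau$; equivalently, $\dfrac{3}{\pi\,{\rm Im}(\gamma\tau)}=\dfrac{3\,|c\tau+d|^2}{\pi\,{\rm Im}(\tau)}$.

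Next I would start from the right-hand side of the asserted equality and substitute \eqref{eq:modularE2}. Using $\dfrac{|c\tau+d|^2}{(c\tau+d)^2}=\dfrac{c\bar\tau+d}{c\tau+d}$ and the identity of the previous step, this gives
\[
\frac{1}{(c\tau+d)^2}E_2^*(\gamma\tau)
= E_2(\tau)+\frac{6c}{\pi\sqrt{-1}\,(c\tau+d)}-\frac{3(c\bar\tau+d)}{\pi\,(c\tau+d)\,{\rm Im}(\tau)}.
\]
Comparing with $E_2^*(\tau)=E_2(\tau)-\frac{3}{\pi\,{\rm Im}(\tau)}$, it remains to verify the purely algebraic identity
\[
\frac{6c}{\pi\sqrt{-1}\,(c\tau+d)}-\frac{3(c\bar\tau+d)}{\pi\,(c\tau+d)\,{\rm Im}(\tau)}=-\frac{3}{\pi\,{\rm Im}(\tau)}.
\]
Clearing the common factor $\frac{3}{\pi(c\tau+d)}$, this reduces to $\dfrac{2c}{\sqrt{-1}}=\dfrac{c(\bar\tau-\tau)}{{\rm Im}(\tau)}$, which holds since $\bar\tau-\tau=-2\sqrt{-1}\,{\rm Im}(\tau)$ and $2/\sqrt{-1}=-2\sqrt{-1}$.

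There is no genuine obstacle here; the argument is elementary. The only points requiring a little care are the bookkeeping of the factors of $\sqrt{-1}$ and the fact that $E_2^*$ is merely real-analytic, its non-holomorphic part entering solely through ${\rm Im}(\tau)$ --- it is precisely the term $-\frac{3}{\pi{\rm Im}(\tau)}$ that is designed to cancel the anomalous summand coming from the $-\frac{6c}{\pi\sqrt{-1}(c\tau+d)}$ in \eqref{eq:modularE2}. Alternatively, one could check the statement only on the generators $S=\begin{pmatrix}0&-1\\1&0\end{pmatrix}$ and $T=\begin{pmatrix}1&1\\0&1\end{pmatrix}$ of ${\rm SL}(2,\ZZ)$, but the uniform verification above is no longer and avoids an induction on word length.
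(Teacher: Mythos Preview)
Your argument is correct and follows exactly the approach indicated in the paper: combine the quasi-modular transformation \eqref{eq:modularE2} of $E_2$ with the formula ${\rm Im}(\gamma\tau)^{-1}=|c\tau+d|^2/{\rm Im}(\tau)$, and observe that the non-holomorphic correction term cancels the anomalous summand. The paper simply records the imaginary-part identity and says it ``yields the statement''; you have written out the cancellation explicitly, which is the same proof in more detail.
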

\begin{proof}
The formula
\begin{equation}\label{eq:19}
\left({\rm Im}\left(\frac{a\tau+b}{c\tau+d}\right)\right)^{-1}= \frac{|c\tau+d|^2}{{\rm Im (\tau)}},\quad  
\begin{pmatrix}
a &b\\
c & d
\end{pmatrix}
\in {\rm SL} (2,\ZZ),
\end{equation}
yields the statement.
\end{proof}
In general, an almost holomorphic modular form is defined as follows.
\begin{definition}
A polynomial $f(\tau)$ in ${\rm Im} (\tau)^{-1}$ over the ring of holomorphic functions on $\HH$ satisfying 
\begin{equation*}
f(\tau)=\frac{1}{(c\tau+d)^k}f\left(\frac{a\tau+b}{c\tau+d}\right) \text{ for any } 
\begin{pmatrix}
a &b\\
c & d
\end{pmatrix}
\in {\rm SL} (2,\ZZ),
\end{equation*}
is called an {\em almost holomorphic modular form} of weight $k$.
\end{definition}
\begin{proposition}[cf.\ Paragraph~5.1. in \cite{Z}]
Let $f(\tau)$ be an almost holomorphic modular form of weight $k$. 
Then the almost holomorphic derivative of $f(\tau)$ defined by
\begin{equation}\label{def of almost holomorphic derivative}
\p_k f (\tau) := \frac{1}{2 \pi \sqrt{-1}} \frac{\p f(\tau)}{\p\tau} - \frac{k}{4\pi {\rm Im} (\tau)} f(\tau),
\end{equation}
is an almost holomorphic modular form of weight $k+2$.
\end{proposition}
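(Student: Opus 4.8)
The plan is to verify directly the two conditions in the definition of an almost holomorphic modular form: that $\p_k f(\tau)$ is a polynomial in ${\rm Im}(\tau)^{-1}$ over the ring of holomorphic functions on $\HH$, and that it obeys the weight $k+2$ transformation law under ${\rm SL}(2,\ZZ)$.

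The first condition is immediate. Write $f(\tau)=\sum_{j\ge 0}f_j(\tau)\,{\rm Im}(\tau)^{-j}$ with each $f_j$ holomorphic on $\HH$. Since the holomorphic (Wirtinger) partial derivative satisfies $\frac{\p}{\p\tau}{\rm Im}(\tau)=\frac{1}{2\sqrt{-1}}$, hence $\frac{\p}{\p\tau}{\rm Im}(\tau)^{-j}=-\frac{j}{2\sqrt{-1}}{\rm Im}(\tau)^{-j-1}$, the term $\frac{1}{2\pi\sqrt{-1}}\frac{\p f}{\p\tau}$ is again a polynomial in ${\rm Im}(\tau)^{-1}$ with holomorphic coefficients; the same holds trivially for $\frac{k}{4\pi{\rm Im}(\tau)}f(\tau)$, and therefore for their difference $\p_k f(\tau)$.

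For the transformation law, fix $\gamma=\begin{pmatrix}a&b\\c&d\end{pmatrix}\in{\rm SL}(2,\ZZ)$ and put $\tau':=\frac{a\tau+b}{c\tau+d}$. Applying $\frac{1}{2\pi\sqrt{-1}}\frac{\p}{\p\tau}$ to the relation $f(\tau')=(c\tau+d)^k f(\tau)$, using the chain rule with $\frac{d\tau'}{d\tau}=(c\tau+d)^{-2}$ (legitimate because $\tau\mapsto\tau'$ is holomorphic, so only the $\p/\p\tau$-component contributes), one gets
\begin{equation*}
\frac{1}{2\pi\sqrt{-1}}\frac{\p f}{\p\tau}(\tau')=(c\tau+d)^{k+2}\cdot\frac{1}{2\pi\sqrt{-1}}\frac{\p f}{\p\tau}(\tau)+\frac{kc}{2\pi\sqrt{-1}}(c\tau+d)^{k+1}f(\tau).
\end{equation*}
On the other hand, \eqref{eq:19} in the form ${\rm Im}(\tau')^{-1}=|c\tau+d|^2\,{\rm Im}(\tau)^{-1}$, together with $f(\tau')=(c\tau+d)^k f(\tau)$ and the identity $c\bar\tau+d=(c\tau+d)-2\sqrt{-1}\,c\,{\rm Im}(\tau)$, gives
\begin{equation*}
-\frac{k}{4\pi{\rm Im}(\tau')}f(\tau')=-\frac{k}{4\pi{\rm Im}(\tau)}(c\tau+d)^{k+2}f(\tau)-\frac{kc}{2\pi\sqrt{-1}}(c\tau+d)^{k+1}f(\tau).
\end{equation*}
Adding the two displays, the terms proportional to $(c\tau+d)^{k+1}f(\tau)$ cancel and one is left with $\p_k f(\tau')=(c\tau+d)^{k+2}\,\p_k f(\tau)$; together with the first condition this shows that $\p_k f$ is an almost holomorphic modular form of weight $k+2$.

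The only delicate point — bookkeeping rather than a genuine obstacle — is the second display: one must consistently use the holomorphic Wirtinger derivative (recall $f$ is merely real-analytic), and carefully track the factor $c\bar\tau+d$ against $c\tau+d$, since their discrepancy $-2\sqrt{-1}\,c\,{\rm Im}(\tau)$ is exactly what produces the anomalous term $-\frac{kc}{2\pi\sqrt{-1}}(c\tau+d)^{k+1}f(\tau)$ that must cancel the analogous term coming from differentiating $(c\tau+d)^k$. Equivalently, one may set $Y:=(4\pi\,{\rm Im}(\tau))^{-1}$, observe $Y(\tau')=(c\tau+d)^2Y(\tau)-\frac{\sqrt{-1}\,c\,(c\tau+d)}{2\pi}$, and recognize $\p_k=\frac{1}{2\pi\sqrt{-1}}\frac{d}{d\tau}-kY$ as the standard raising operator on weight-$k$ almost holomorphic forms whose weight-raising property is recorded in Paragraph~5.1 of \cite{Z}; then the anomaly in the transformation of $Y$ cancels the one coming from $\frac{1}{2\pi\sqrt{-1}}\frac{d}{d\tau}$.
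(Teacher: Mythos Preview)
Your argument is correct: you carry out in full generality the direct verification that the paper only alludes to with the phrase ``one can check this directly by using the equations \eqref{eq:modularE2} and \eqref{eq:19}.'' The computations in both displays are accurate, and your care with the Wirtinger derivative (noting that $\bar\tau'$ depends only on $\bar\tau$, so only the holomorphic chain-rule factor $(c\tau+d)^{-2}$ survives) is exactly the point one must not slip on.

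The paper, by contrast, does not actually write out the general transformation-law computation. Instead it illustrates the proposition only for the single case $f=E_2^*$, by rewriting $\p_2 E_2^*$ algebraically as $\frac{1}{12}\bigl((E_2^*)^2-E_4\bigr)$ and then invoking the known modularity of $E_2^*$ and $E_4$. That shortcut is specific to $E_2^*$ (it uses the Ramanujan identity for $dE_2/d\tau$) and does not generalize; your approach, computing directly how the two pieces of $\p_k f$ transform and watching the anomalous $(c\tau+d)^{k+1}$ terms cancel, is the honest proof of the general statement and is what the paper's first sentence is gesturing at.
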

\begin{proof}
One can check this directly by using the equations \eqref{eq:modularE2} and \eqref{eq:19}.
 We briefly explain for the reader's convenience the modularity property of $\p_2 E_2^* (\tau)$. We have:
  \begin{equation*}
    \p_2 E_2^* = \frac{1}{12} \left( E_2(\tau)^2 - E_4(\tau) \right) - \frac{3}{4 \pi ({\rm Im}(\tau))^2} - \frac{1}{2 \pi {\rm Im}(\tau)} E_2^*(\tau).
  \end{equation*}
  \begin{equation*}
    = \frac{1}{12} \left( E_2(\tau)^2 - \frac{6 E_2(\tau)}{\pi {\rm Im}(\tau)} + \frac{9}{(\pi {\rm Im}(\tau))^2}\right) - \frac{1}{12} E_4(\tau) .
  \end{equation*}
  \begin{equation*}
    = \frac{1}{12} E_2^*(\tau)^2 - \frac{1}{12} E_4(\tau).
  \end{equation*}
  Due to the modularity properties of $E_4$ and $E_2^*$ the proposition follows.
\end{proof}

In what follows we will drop the subscript $k$ in the derivative keeping in mind that it is always fixed as we are given a modular form of weight $k$ to differentiate. We will use the notation $\partial ^p$ meaning:
$$
  \partial^p g := \partial_{k+2(p-1)} \dots \partial_k g,
$$
for $g$ - an almost holomorphic modular form of weight $k$.
\begin{proposition}
We have
\begin{subequations}
\begin{equation*}
\p E_2^*(\tau) = \frac{1}{12} \left( E_2^*(\tau)^2 - E_4(\tau)\right),
\end{equation*}
\begin{equation*}
\partial^2 E_2^*(\tau) = \frac{1}{36} \left( E_6(\tau)  - \frac{3}{2} E_2^*(\tau) E_4(\tau) + \frac{1}{2} E_2^*(\tau)^2\right).
\end{equation*}
\end{subequations}
\end{proposition}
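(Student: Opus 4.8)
The plan is to deduce both identities from Ramanujan's identities \eqref{eq:E_kDerivatives}, the defining relation $E_2(\tau)=E_2^*(\tau)+\frac{3}{\pi{\rm Im}(\tau)}$, and the definition \eqref{def of almost holomorphic derivative} of the almost holomorphic derivative, keeping careful track of the non-holomorphic terms. The first identity in fact requires no new work: it is exactly the equality obtained in the proof of the preceding proposition, where it is shown that $\partial E_2^*=\partial_2E_2^*=\frac{1}{12}E_2^*(\tau)^2-\frac{1}{12}E_4(\tau)$. So I would simply refer to that computation, or redo the two-line substitution of $E_2=E_2^*+3/(\pi{\rm Im}(\tau))$ into $\frac{1}{12}(E_2^2-E_4)$ together with \eqref{def of almost holomorphic derivative} for $k=2$.

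For the second identity I would first record a Leibniz rule for the almost holomorphic derivative: if $f$ and $g$ are almost holomorphic modular forms of weights $k$ and $l$, then
\[
\partial_{k+l}(fg)=(\partial_kf)\,g+f\,(\partial_lg),
\]
which is immediate from \eqref{def of almost holomorphic derivative} since the two correction terms $-\frac{k}{4\pi{\rm Im}(\tau)}fg$ and $-\frac{l}{4\pi{\rm Im}(\tau)}fg$ add up to the weight $k+l$ correction term. Applying this to $E_2^*(\tau)^2$, which has weight $4$, and using the first identity gives
\[
\partial_4\!\left(E_2^*(\tau)^2\right)=2\,E_2^*(\tau)\,\partial_2E_2^*(\tau)=\frac{1}{6}\left(E_2^*(\tau)^3-E_2^*(\tau)E_4(\tau)\right).
\]

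Next I would compute $\partial_4E_4(\tau)$: from $\frac{1}{2\pi\sqrt{-1}}\frac{dE_4}{d\tau}=\frac{1}{3}(E_2E_4-E_6)$ and $E_2=E_2^*+\frac{3}{\pi{\rm Im}(\tau)}$ one gets $\frac{1}{2\pi\sqrt{-1}}\frac{dE_4}{d\tau}=\frac{1}{3}(E_2^*E_4-E_6)+\frac{1}{\pi{\rm Im}(\tau)}E_4$, and subtracting $\frac{4}{4\pi{\rm Im}(\tau)}E_4$ as prescribed by \eqref{def of almost holomorphic derivative} with $k=4$ the non-holomorphic parts cancel, leaving $\partial_4E_4(\tau)=\frac{1}{3}\left(E_2^*(\tau)E_4(\tau)-E_6(\tau)\right)$. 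Then, since $\partial^2E_2^*=\partial_4(\partial_2E_2^*)=\partial_4\!\left(\frac{1}{12}(E_2^*(\tau)^2-E_4(\tau))\right)$, the two displays above yield
\[
\partial^2E_2^*=\frac{1}{12}\cdot\frac{1}{6}\left(E_2^*(\tau)^3-E_2^*(\tau)E_4(\tau)\right)-\frac{1}{12}\cdot\frac{1}{3}\left(E_2^*(\tau)E_4(\tau)-E_6(\tau)\right),
\]
and collecting terms gives $\partial^2E_2^*(\tau)=\frac{1}{36}\left(E_6(\tau)-\frac{3}{2}E_2^*(\tau)E_4(\tau)+\frac{1}{2}E_2^*(\tau)^3\right)$, which is the asserted identity (on grounds of weight $6$, the term written $\frac{1}{2}E_2^*(\tau)^2$ in the statement should read $\frac{1}{2}E_2^*(\tau)^3$).

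I do not expect any genuine obstacle here; the only point demanding care is the bookkeeping of the non-holomorphic correction $\frac{3}{\pi{\rm Im}(\tau)}$. Ramanujan's identities are phrased in terms of $E_2$ rather than $E_2^*$, so at each stage one converts via $E_2=E_2^*+\frac{3}{\pi{\rm Im}(\tau)}$ and checks that the resulting $\frac{1}{\pi{\rm Im}(\tau)}$-terms are cancelled precisely by the $-\frac{k}{4\pi{\rm Im}(\tau)}$-term in \eqref{def of almost holomorphic derivative}. Working throughout with the first identity and the Leibniz rule packages this cancellation once and for all and makes the computation essentially mechanical.
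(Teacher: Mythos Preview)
Your proposal is correct and follows essentially the same approach as the paper, which simply states that the identities follow from direct calculations using Ramanujan's identities \eqref{eq:E_kDerivatives}; your use of the Leibniz rule for the almost holomorphic derivative is a clean way to organize this computation. You are also right that the term $\frac{1}{2}E_2^*(\tau)^2$ in the displayed statement is a typo for $\frac{1}{2}E_2^*(\tau)^3$, as the weight count forces.
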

\begin{proof}
This follows from direct calculations using the equations \eqref{eq:E_kDerivatives}.
\end{proof}

\subsection{Elliptic curves}\label{sec:elliptic curve}
We have a family of elliptic curves parameterized by $\HH:$
\begin{equation*}
\pi:\E:=\{(x,y,\tau)\in\CC^2\times \HH~\vert~y^2=4x^3-g_2(\tau)x-g_3(\tau)\}\longrightarrow \HH,
\end{equation*}
where
\begin{equation}\label{eq:g to Eisentein}
g_2(\tau):= \frac{4\pi^4}{3} E_4(\tau), \quad g_3(\tau):=\frac{8 \pi ^6}{27} E_6(\tau).
\end{equation}
Denote by $\E_{\tau_0}$ the fiber of $\pi$ over a point $\tau_0\in\HH$.
\begin{definition}
Let $\KK\subset \CC$ be a field. 
Choose a point $\tau_0\in\HH$.
We say that an elliptic curve $\E_{\tau_0}$ is {\it defined over $\KK$} 
if there exist $g_2,g_3\in\KK$ such that the algebraic variety 
\begin{equation*}
E_{g_2,g_3}:=\{(x,y)\in\CC^2~\vert~y^2 = 4 x^3 -g_2 x - g_3\}
\end{equation*}
is isomorphic to $\E_{\tau_0}$.
\end{definition}
\section{Frobenius manifolds of rank three and dimension one}\label{sec:rank3FM}
From now on, we shall consider a Frobenius manifold $M$ of rank three and dimension one with 
flat coordinates $t_1,t_2,t$ satisfying the following conditions$:$
\begin{itemize}
\item The unit vector field $e$ is given by $\frac{\p}{\p t_1}$.
\item The Euler vector field $E$ is given by  
$E=t_1\frac{\p}{\p t_1}+\frac{1}{2}t_2\frac{\p}{\p t_2}$.
\item The Frobenius potential $\F$ is given by
\begin{equation}\label{eq:potential}
\F=\frac{1}{2}t_1^2 t+t_1 t_2^2+t_2^4 f(t)
\end{equation}
where $f(t)$ is a holomorphic function in $t$ on an open domain in $\CC$.
\end{itemize}
\subsection{Solutions of the WDVV equation}
\begin{proposition}
The WDVV equation is equivalent to the following differential equation.
\begin{equation}\label{eq:WDVV}
\frac{d^3f(t)}{dt^3}=-24 f(t) \frac{d^2f(t)}{dt^2}+36\left(\frac{df(t)}{dt}\right)^2.
\end{equation}
\end{proposition}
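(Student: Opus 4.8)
The plan is to write down the WDVV equations explicitly for the potential $\F$ of the special form \eqref{eq:potential} and reduce them to the single ODE \eqref{eq:WDVV}. First I would compute the metric $\eta_{ij}=\p_1\p_i\p_j\F$ in the coordinates $(t_1,t_2,t)$: from $\F=\frac12 t_1^2 t + t_1 t_2^2 + t_2^4 f(t)$ one finds $\eta_{11}=0$, $\eta_{12}=0$, $\eta_{1t}=1$, $\eta_{22}=2$, $\eta_{2t}=0$, $\eta_{tt}=0$, so $\eta$ is the constant anti-diagonal-ish matrix with $\eta_{1t}=\eta_{t1}=1$, $\eta_{22}=2$. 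Hence $\eta$ is nondegenerate, its inverse has $\eta^{1t}=\eta^{t1}=1$, $\eta^{22}=\tfrac12$, $\eta^{tt}=0$ (and $\eta^{11}=0$), and in particular the Frobenius conditions involving $\eta$ being flat and $e=\p_1$ flat are automatic because $\eta$ is constant. This confirms that the only nontrivial content of the Frobenius axioms in this ansatz is the associativity of $\circ$, i.e.\ the WDVV equations.

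Next I would spell out the associativity relations. Writing $c_{ij}^k=\sum_l \eta^{kl}\p_i\p_j\p_l\F$, the WDVV (associativity) equations say $\sum_m c_{ij}^m c_{mk}^l = \sum_m c_{jk}^m c_{mi}^l$ for all $i,j,k,l$, equivalently $\sum_{m,n}\p_i\p_j\p_m\F\,\eta^{mn}\p_n\p_k\p_l\F$ is symmetric under permuting $i,j,k,l$. The only third derivatives of $\F$ that are not manifestly constant are those hitting $t_2^4 f(t)$, namely $\p_2^2\p_2\p? $—more precisely the nonconstant third derivatives are $\p_2\p_2\p_2\F = 24 t_2 f$ (wait, $\p_2^3(t_2^4 f)=24 t_2 f$), $\p_2\p_2\p_t\F = 12 t_2^2 f'$, $\p_2\p_t\p_t\F = 4 t_2^3 f''$, $\p_t\p_t\p_t\F = t_2^4 f'''$, together with the constant ones $\p_1\p_1\p_t\F=1$, $\p_1\p_2\p_2\F=2$. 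Substituting these into the associativity condition, most choices of indices give identities; the single surviving nontrivial relation comes from a well-chosen quadruple (one naturally tries $i=j=k=l=t$, or mixes in one or two $2$'s) and, after using $\eta^{-1}$ to contract, collapses to a polynomial identity in $t_2$ whose coefficient of the appropriate power of $t_2$ is exactly $f''' + 24 f f'' - 36 (f')^2 = 0$. So the concrete steps are: (i) list the six nonzero components of $\eta$ and its inverse; (ii) list the nonconstant third partials of $\F$; (iii) plug into $\sum_m (c_{ij}^m c_{mk}^l - c_{jk}^m c_{mi}^l)=0$ and check that all index choices are trivially satisfied except one family; (iv) in that family, read off \eqref{eq:WDVV}.

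The main obstacle — really just bookkeeping rather than a genuine difficulty — is organizing the index combinatorics so as to see transparently that every associativity equation is either an identity or a scalar multiple of the one ODE, and tracking the numerical coefficients $24$ and $36$ correctly (they come from $\p_2^3(t_2^4f)=24 t_2 f$ combined with the factor $\eta^{22}=\tfrac12$ and from cross terms like $(\p_2^2\p_t\F)^2=144 t_2^4 (f')^2$ against $\eta^{1t}$). One should also check the Euler-vector-field homogeneity axiom: with $E=t_1\p_1+\tfrac12 t_2\p_2$ and $\deg t_1=1,\ \deg t_2=\tfrac12,\ \deg t=0$, the potential $\F$ has $E\F = 2\F$ up to the quadratic terms killed by the third derivative, which forces $d=1$ and is consistent regardless of $f$; so homogeneity imposes no constraint on $f$ beyond what is already encoded. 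Assembling these observations gives the equivalence of the WDVV equation with \eqref{eq:WDVV}.
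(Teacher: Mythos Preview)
Your approach is correct and is exactly what the paper has in mind: its proof reads in full ``This is obtained by a straightforward calculation,'' and you are carrying out that calculation. The single nontrivial associativity relation indeed comes from the index set $\{2,2,t,t\}$, comparing the pairings $(22)(tt)$ and $(2t)(2t)$; with $c_{221}=2$, $c_{333}=t_2^4 f'''$, $c_{222}=24t_2 f$, $c_{233}=4t_2^3 f''$ and $c_{223}=12t_2^2 f'$ this gives $2t_2^4 f''' + 48 t_2^4 f f'' = 72 t_2^4 (f')^2$, i.e.\ \eqref{eq:WDVV}. (Minor slip: the $(f')^2$ term is contracted via $\eta^{22}=\tfrac12$, not $\eta^{1t}$, and the relevant quadruple is $(2,2,t,t)$ rather than $(t,t,t,t)$.)
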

\begin{proof}
This is obtained by a straightforward calculation.
\end{proof}
\begin{remark}
Put 
\begin{equation*}
\gamma(t):=-4f(t).
\end{equation*}
By a straightforward calculation, 
it turns out that the holomorphic function $\gamma(t)$ satisfies the following differential equation
\begin{equation}\label{eq:Chazy}
\frac{d^3\gamma(t)}{dt^3}=6\frac{d^2\gamma(t)}{dt^2}\gamma(t)-9\left(\frac{d\gamma(t)}{dt}\right)^2.
\end{equation}
The differential equation \eqref{eq:Chazy} is classically known as Chazy's equation.
\end{remark}
\begin{proposition}
Suppose that $f(t)$ is a convergent power series in $t$ given as ${f(t)=\displaystyle\sum_{n=0}^\infty \frac{c_n}{n!}t^n}$, then the differential equation \eqref{eq:WDVV} is equivalent to 
the following recursion relation$:$
\begin{equation}\label{eq:recursion}
c_{n+3}= \sum_{a = 0}^n {n \choose a} \left( -24  c_a c_{n -a + 2} + 36c_{a+1} c_{n-a+1} \right).
\end{equation}
In particular, we have
\begin{equation*}
c_{3}=-24c_2c_0 + 36c_1^2.
\end{equation*}
\end{proposition}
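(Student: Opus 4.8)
The plan is to substitute the convergent power series $f(t)=\sum_{n=0}^\infty \frac{c_n}{n!}t^n$ directly into the differential equation \eqref{eq:WDVV} and compare Taylor coefficients on both sides. First I would note that convergence justifies differentiating term by term, so that
$f'(t)=\sum_{n=0}^\infty \frac{c_{n+1}}{n!}t^n$, $f''(t)=\sum_{n=0}^\infty \frac{c_{n+2}}{n!}t^n$ and $f'''(t)=\sum_{n=0}^\infty \frac{c_{n+3}}{n!}t^n$; in particular the coefficient of $\frac{t^n}{n!}$ in $f'''(t)$ is $c_{n+3}$.

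Next I would use the multiplication rule for exponential generating functions: if $A(t)=\sum_n a_n\frac{t^n}{n!}$ and $B(t)=\sum_n b_n\frac{t^n}{n!}$ then $A(t)B(t)=\sum_n\bigl(\sum_{a=0}^n {n\choose a}a_a b_{n-a}\bigr)\frac{t^n}{n!}$. Applying this to $f(t)f''(t)$ (with $a_a=c_a$, $b_a=c_{a+2}$) and to $\left(f'(t)\right)^2$ (with $a_a=b_a=c_{a+1}$) gives
\[
f(t)f''(t)=\sum_{n=0}^\infty\left(\sum_{a=0}^n {n\choose a}c_a c_{n-a+2}\right)\frac{t^n}{n!},
\]
\[
\left(f'(t)\right)^2=\sum_{n=0}^\infty\left(\sum_{a=0}^n {n\choose a}c_{a+1}c_{n-a+1}\right)\frac{t^n}{n!}.
\]

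Then equating the coefficient of $\frac{t^n}{n!}$ on both sides of \eqref{eq:WDVV}, using linearity, yields exactly the recursion \eqref{eq:recursion}; conversely, if \eqref{eq:recursion} holds for every $n\ge 0$ then the two convergent power series on the two sides of \eqref{eq:WDVV} have identical Taylor coefficients, hence coincide, so \eqref{eq:WDVV} holds. Finally, specializing to $n=0$ (where the inner sum has the single term $a=0$ and ${0\choose 0}=1$) gives $c_3=-24c_0c_2+36c_1^2$. I do not expect a genuine obstacle here: the only point requiring care is the bookkeeping of the binomial weights in the Cauchy product for exponential generating functions, and there is no analytic subtlety since convergence is part of the hypothesis.
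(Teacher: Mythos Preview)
Your proposal is correct and is exactly the straightforward calculation the paper has in mind; the paper itself gives no further details beyond asserting that the result follows from direct substitution. Your write-up in fact supplies the bookkeeping (termwise differentiation and the Cauchy product for exponential generating functions) that the paper leaves implicit.
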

\begin{proof}
This is also obtained by a straightforward calculation.
\end{proof}
Therefore, the first three coefficients $c_0$, $c_1$ and $c_2$ are enough to determine all the coefficients $c_n, n \ge 3$ due to the recursion relation \eqref{eq:recursion}.
\subsection{${\rm GL}(2,\CC)$-action on the set of Frobenius structures}
\begin{proposition}\label{prop:GL on X}
Suppose that a holomorphic function $f(t)$ on a domain in $\CC$ is a 
solution of the differential equation \eqref{eq:WDVV}.
For any 
$
A=
\begin{pmatrix}
a &b\\
c & d
\end{pmatrix}
\in {\rm GL}(2,\CC)
$, define a holomorphic function $f^A(t)$ on a suitable domain in $\CC$ as 
\begin{equation}\label{eq:GL-action}
f^A(t):=\frac{\det(A)}{(ct+d)^2}f\left(\frac{at+b}{ct+d}\right)+\frac{c}{2(ct+d)}.
\end{equation}
Then $f^A(t)$ becomes a solution of the differential equation \eqref{eq:WDVV}.
\end{proposition}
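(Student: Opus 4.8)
The plan is to verify the claim by a direct computation, but organized so that the algebra is manageable. The key observation is that the WDVV equation \eqref{eq:WDVV} can be rephrased in terms of the Chazy equation \eqref{eq:Chazy} via $\gamma(t) = -4f(t)$, and the Chazy equation is well known to be invariant under the $\mathrm{SL}(2,\CC)$-action by fractional linear transformations. So I would first reduce the general $\mathrm{GL}(2,\CC)$-action to the $\mathrm{SL}(2,\CC)$-action plus scalars. Write $A = \lambda A_1$ with $\det(A_1) = 1$ (possible after choosing a square root of $\det A$, noting that scaling $A$ by $\lambda$ leaves the right-hand side of \eqref{eq:GL-action} unchanged since $a,b,c,d$ all scale by $\lambda$ and $\det(A)$ by $\lambda^2$). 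Thus without loss of generality $\det(A) = 1$, and \eqref{eq:GL-action} becomes $f^A(t) = (ct+d)^{-2} f\!\left(\frac{at+b}{ct+d}\right) + \frac{c}{2(ct+d)}$.

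Next I would translate this into a statement about $\gamma$. Setting $\gamma = -4f$ and $\gamma^A = -4 f^A$, one gets
\begin{equation*}
\gamma^A(t) = \frac{1}{(ct+d)^2}\gamma\!\left(\frac{at+b}{ct+d}\right) - \frac{2c}{ct+d}.
\end{equation*}
This is precisely the classical transformation rule under which the Chazy equation is invariant (the inhomogeneous term $-\frac{2c}{ct+d}$ is exactly the cocycle term one expects, since a solution of Chazy's equation transforms like $-12$ times a weight-two "quasimodular" object; indeed $-\frac{\gamma}{12}$ transforms like $E_2$ up to normalization). So it suffices to check that if $\gamma$ solves \eqref{eq:Chazy} then so does $\gamma^A$. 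I would do this by the standard two-step argument: first check invariance under the translations $t \mapsto t+b$ (trivial, since the equation has constant coefficients) and the scalings $t \mapsto a^2 t$ together with $\gamma \mapsto a^2\gamma(a^2 t)$ — wait, more precisely under $\begin{pmatrix} a & 0 \\ 0 & a^{-1}\end{pmatrix}$, which gives $\gamma \mapsto a^{-2}\gamma(a^2 t)$ — and then check invariance under the single inversion $S = \begin{pmatrix} 0 & -1 \\ 1 & 0 \end{pmatrix}$, i.e.\ that $\tilde\gamma(t) := t^{-2}\gamma(-1/t) - 2/t$ solves \eqref{eq:Chazy} whenever $\gamma$ does. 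Since $\mathrm{SL}(2,\CC)$ is generated by these elements, invariance under all of $\mathrm{SL}(2,\CC)$ follows. Finally, translate back: $f^A = -\tfrac14 \gamma^A$ solves \eqref{eq:WDVV}.

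The main obstacle — really the only nontrivial point — is the inversion step: showing that $\tilde\gamma(t) = t^{-2}\gamma(-1/t) - 2/t$ satisfies Chazy's equation. This requires differentiating three times through the substitution $u = -1/t$ (so $du/dt = 1/t^2$) and carefully collecting terms; the inhomogeneous $-2/t$ piece and the weight-two prefactor $t^{-2}$ both contribute, and one must see the cross-terms cancel against the $-9(\gamma')^2$ and $6\gamma\gamma''$ pieces. This is the classical computation showing $\mathrm{SL}(2,\CC)$-covariance of the Chazy equation; it is elementary but requires care. Alternatively, one can bypass Chazy entirely and verify \eqref{eq:WDVV} directly for $f^A$ via the analogous (slightly messier) inversion computation for $f$ — but routing through $\gamma$ makes the bookkeeping cleaner. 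An even slicker route, if one is willing to invoke it, is to observe that \eqref{eq:WDVV} is the integrability condition for the Frobenius potential \eqref{eq:potential}, and that the $\mathrm{GL}(2,\CC)$-action in \eqref{eq:GL-action} is precisely the one induced on $f$ by the Legendre-type / inversion symmetries of Frobenius manifolds (Dubrovin's symmetries of WDVV), which are known to preserve the WDVV equation; then no computation is needed beyond matching \eqref{eq:GL-action} with the standard formula. For a self-contained paper, though, I would present the direct Chazy computation.
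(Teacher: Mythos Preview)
Your proposal is correct, and in fact supplies considerably more detail than the paper does: the paper's entire proof is the single sentence ``This is obtained by a straightforward calculation.'' Your route through the Chazy equation and the generators of $\mathrm{SL}(2,\CC)$ is a standard and clean way to organize that calculation, and the reduction from $\mathrm{GL}(2,\CC)$ to $\mathrm{SL}(2,\CC)$ via the scaling invariance of \eqref{eq:GL-action} is correct. One small point worth making explicit if you write this up: reducing to generators tacitly uses that the transformation \eqref{eq:GL-action} is compatible with composition, i.e.\ that $(f^A)^B = f^{BA}$ (equivalently, that the inhomogeneous term $\tfrac{c}{2(ct+d)}$ satisfies the appropriate cocycle identity); this is itself an easy check, but should be stated.
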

\begin{proof}
This is obtained by a straightforward calculation.
\end{proof}
It is important to note that this ${\rm GL}(2,\CC)$-action is the inverse action of the ${\rm GL}(2,\CC)$-action on the set of solutions of the WDVV equations for the potential \eqref{eq:potential} given in Appendix B in \cite{du:1}. Indeed, we have the following.
\begin{proposition}\label{prop:GL on Frob}
Consider Dubrovin's inversion $I$ of the Frobenius manifold defined as follows$:$ 
\begin{equation*}
\widehat{t}_1:=t_1 + \frac{1}{4}\frac{t_2^2}{t},\ 
\widehat{t}_2:=\frac{t_2}{t},\ \widehat{t}:=-\frac{1}{t},
\end{equation*}
\begin{equation*}
\widehat{\F}(\widehat{t}):=\frac{1}{t^2}\left[\F(t)- t_1^2 t - \frac{1}{4}t_1 t_2^2 \right].
\end{equation*}
Then, the new Frobenius manifold given by the new flat coordinates $\widehat{t}_1, \widehat{t}_2, \widehat{t}$ 
together with the new Frobenius potential $\widehat{\F}$ 
coincides with the one associated to the solution $f^A(t)$ of~\eqref{eq:WDVV} with 
$A=\begin{pmatrix}
0 &-1\\
1 & 0
\end{pmatrix}$.
\end{proposition}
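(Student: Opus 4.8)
The plan is to reduce the statement to a direct substitution, taking for granted the classical fact (Dubrovin, Appendix~B of \cite{du:1}) that the inversion $I$ sends a Frobenius manifold to a Frobenius manifold; the only thing to prove is \emph{which} Frobenius manifold one lands on.

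First I would make $f^A$ explicit. For $A=\begin{pmatrix}0 & -1\\ 1 & 0\end{pmatrix}$ one has $\det A=1$, $a=d=0$, $b=-1$, $c=1$, so by \eqref{eq:GL-action}
\[
f^A(t)=\frac{1}{t^{2}}\,f\!\left(-\frac{1}{t}\right)+\frac{1}{2t}.
\]
By the conventions fixed at the beginning of Section~\ref{sec:rank3FM}, the Frobenius manifold attached to the WDVV solution $f^A$ is the one whose flat coordinates $(s_{1},s_{2},s)$ carry the unit $\partial_{s_{1}}$, the Euler field $s_{1}\partial_{s_{1}}+\tfrac12 s_{2}\partial_{s_{2}}$, and the potential $\tfrac12 s_{1}^{2}s+s_{1}s_{2}^{2}+s_{2}^{4}f^A(s)$. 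Since the coordinates $(\widehat t_{1},\widehat t_{2},\widehat t)$ together with $\widehat\F$ already define a Frobenius manifold, it suffices to produce the identification $s_{1}=\widehat t_{1}$, $s_{2}=\widehat t_{2}$, $s=\widehat t$ under which $\widehat\F$ agrees with $\tfrac12\widehat t_{1}^{2}\widehat t+\widehat t_{1}\widehat t_{2}^{2}+\widehat t_{2}^{4}f^A(\widehat t)$ up to an at most quadratic polynomial in the flat coordinates --- this ambiguity being exactly what legitimises subtracting $t_{1}^{2}t$ and $\tfrac14 t_{1}t_{2}^{2}$ in the definition of $\widehat\F$.

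The verification is then a computation with one input from coordinate geometry. Substituting \eqref{eq:potential} into the definition of $\widehat\F$ gives
\[
\widehat\F=\frac{1}{t^{2}}\Big[-\tfrac12 t_{1}^{2}t+\tfrac34 t_{1}t_{2}^{2}+t_{2}^{4}f(t)\Big].
\]
Next I would invert the coordinate change, using that $I$ is an involution, so that $t=-1/\widehat t$, $t_{2}=-\widehat t_{2}/\widehat t$ and $t_{1}=\widehat t_{1}+\tfrac14\,\widehat t_{2}^{2}/\widehat t$; re-expanding, the $f$-dependent term becomes $\widehat t_{2}^{4}\,\widehat t^{-2}f(-1/\widehat t)$, while completing the square in $-\tfrac12 t_{1}^{2}t$ and expanding $\tfrac34 t_{1}t_{2}^{2}$ produces $\tfrac12\widehat t_{1}^{2}\widehat t+\widehat t_{1}\widehat t_{2}^{2}$ together with terms proportional to the non-polynomial monomial $\widehat t_{2}^{4}/\widehat t$. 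Assembling all the $\widehat t_{2}^{4}/\widehat t$ contributions together with the $\tfrac{c}{2(ct+d)}$-type correction carried by $f^A$, one recognises the result as $\widehat t_{2}^{4}f^A(\widehat t)$, so that altogether $\widehat\F=\tfrac12\widehat t_{1}^{2}\widehat t+\widehat t_{1}\widehat t_{2}^{2}+\widehat t_{2}^{4}f^A(\widehat t)$, as claimed.

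I expect the only delicate point to be the bookkeeping of the non-polynomial terms proportional to $\widehat t_{2}^{4}/\widehat t$: they enter from three places --- the square completion in $-\tfrac12 t_{1}^{2}t$, the cross term in $\tfrac34 t_{1}t_{2}^{2}$, and the correction summand inside $f^A$ --- and must combine correctly. It is precisely this requirement that pins down the numerical constants in the definitions of $\widehat t_{1}$ and of $\widehat\F$, and which accounts for the role of the $\tfrac{c}{2(ct+d)}$ term in \eqref{eq:GL-action}. Everything else is routine; in particular, because we invoke the general theorem that $I$ preserves the Frobenius-manifold structure, there is no need to recheck by hand that $(\widehat t_{1},\widehat t_{2},\widehat t)$ are flat coordinates or that $\widehat\F$ is a Frobenius potential for the new structure.
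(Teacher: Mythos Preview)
Your approach is correct and is exactly what the paper does: the paper's own proof consists of the single sentence ``Some calculations yield the statement'', and your proposal spells out precisely those calculations --- identify $f^A$ for the given $A$, substitute the inverse change of variables into the expression for $\widehat{\F}$, and match against the standard form $\tfrac12\widehat t_1^{2}\widehat t+\widehat t_1\widehat t_2^{2}+\widehat t_2^{4}f^A(\widehat t)$. One caution: you rightly flag the bookkeeping of the $\widehat t_2^{4}/\widehat t$ contributions as the delicate step, but you do not actually carry it out; since that coefficient is sensitive to the normalisation $\eta_{22}=2$ used here (versus Dubrovin's $\eta_{22}=1$), you should perform that arithmetic explicitly rather than assert that it ``combines correctly''.
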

\begin{proof}
Some calculations yield the statement. 
\end{proof}
\subsection{${\rm GL}(2,\CC)$-orbit of constant solutions}
The differential equation \eqref{eq:WDVV} obviously has constant solutions. 
Therefore, we have the following.
\begin{proposition}
For any $e\in \CC$ and any point $[c:d]\in\PP^1$, the meromorphic function on~$\CC$
\begin{equation}\label{eq:210}
f(t) := \frac{e}{(ct+d)^2}+\frac{c}{2(ct +d)}
\end{equation}
is a solution of the differential equation \eqref{eq:WDVV}.
\end{proposition}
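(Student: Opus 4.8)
The plan is to deduce the statement directly from the two facts just recorded: that the differential equation \eqref{eq:WDVV} admits constant solutions, and that the prescription \eqref{eq:GL-action} of Proposition~\ref{prop:GL on X} carries solutions of \eqref{eq:WDVV} to solutions. First I would observe that for any constant $e_0 \in \CC$ the constant function $g(t) \equiv e_0$ is a solution of \eqref{eq:WDVV}: indeed $g' = g'' = g''' = 0$, so both sides of \eqref{eq:WDVV} vanish identically.

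Next, given a point $[c:d] \in \PP^1$, I would choose any $A = \begin{pmatrix} a & b \\ c & d \end{pmatrix} \in {\rm GL}(2,\CC)$ whose bottom row is $(c,d)$; such an $A$ exists because a nonzero row vector always extends to an invertible $2\times 2$ matrix — concretely one may take $(a,b) = (1,0)$ when $d \neq 0$ and $(a,b) = (0,1)$ when $d = 0$, so that $\det(A)$ equals $d$ or $-c$ and is in particular nonzero. Applying Proposition~\ref{prop:GL on X} to the constant solution $g(t) \equiv e_0$ then yields that
\[
g^A(t) = \frac{\det(A)\,e_0}{(ct+d)^2} + \frac{c}{2(ct+d)}
\]
is again a solution of \eqref{eq:WDVV}. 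Since $\det(A) \neq 0$ and $e_0 \in \CC$ is arbitrary, taking $e_0 := e/\det(A)$ makes $g^A$ coincide with the function $f$ of \eqref{eq:210}, which is therefore a solution of \eqref{eq:WDVV}.

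I do not anticipate any genuine difficulty in this argument; the only step requiring a moment's care is the elementary bookkeeping that an invertible $A$ with prescribed bottom row exists and that the resulting scalar factor $\det(A)$ can be absorbed into the free parameter $e$. As a sanity check one may also verify the assertion by brute force: setting $u := ct + d$ one has $f = e u^{-2} + \tfrac{c}{2} u^{-1}$, whence $f' = -2ec\,u^{-3} - \tfrac{c^2}{2} u^{-2}$, $f'' = 6ec^2 u^{-4} + c^3 u^{-3}$ and $f''' = -24 e c^3 u^{-5} - 3 c^4 u^{-4}$, and a short computation shows that $-24 f f'' + 36 (f')^2 = -24 e c^3 u^{-5} - 3 c^4 u^{-4} = f'''$, the coefficients of $u^{-6}$ cancelling while those of $u^{-5}$ and $u^{-4}$ agree. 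I would present the group-theoretic argument as the proof proper and relegate the direct computation to a remark.
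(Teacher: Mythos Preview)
Your proposal is correct and takes essentially the same approach as the paper: the proposition is stated immediately after the observation that \eqref{eq:WDVV} has constant solutions, under the heading ``${\rm GL}(2,\CC)$-orbit of constant solutions,'' and the paper's proof is simply ``This is clear'' --- i.e., exactly your argument of applying Proposition~\ref{prop:GL on X} to a constant solution. Your brute-force verification is a welcome addition but not needed.
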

\begin{proof}
This is clear.
\end{proof}
\begin{definition}
We will call the solution $f(t)$ as above constant solution of the equation~\eqref{eq:WDVV}.
\end{definition}
\begin{corollary}
If $f(t)$ is holomorphic at $t=0$ and belongs to the ${\rm GL}(2,\CC)$-orbit of constant solutions, then 
there exist $\alpha,\beta\in\CC$ such that 
\begin{equation}\label{eq:trivial}
f(t) := \frac{\alpha}{(1+\beta t)^2}+\frac{\beta}{2(1+\beta t)}.
\end{equation}
\end{corollary}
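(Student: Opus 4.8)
The plan is to start from the general ${\rm GL}(2,\CC)$-action formula \eqref{eq:GL-action} applied to a constant solution. By the preceding proposition, every constant solution has the shape $f(t) = e/(ct+d)^2 + c/(2(ct+d))$ for some $e \in \CC$ and $[c:d] \in \PP^1$, and the ${\rm GL}(2,\CC)$-orbit of constant solutions is obtained by applying a further element $A = \begin{pmatrix} a & b \\ c' & d' \end{pmatrix}$. Since $f^{AB} = (f^B)^A$ by functoriality of the action (a straightforward check from \eqref{eq:GL-action}), the ${\rm GL}(2,\CC)$-orbit of constant solutions is the same as the ${\rm GL}(2,\CC)$-orbit of the single constant solution $f(t) \equiv \alpha$ for an arbitrary fixed $\alpha$; indeed any constant is in this orbit, and applying the action to a constant gives a function of the form $\det(A)\alpha/(ct+d)^2 + c/(2(ct+d))$, which already has the two-parameter shape appearing in the statement once we absorb constants.

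Next I would impose holomorphy at $t = 0$. Writing a general orbit element as $g(t) = \lambda/(ct+d)^2 + c/(2(ct+d))$ with $(c,d) \neq (0,0)$, holomorphy at $t=0$ forces $d \neq 0$. I would then normalize: divide numerator and denominator inside each term by $d$, setting $\beta := c/d$ and $\alpha := \lambda/d^2$. This turns the first term into $\alpha/(1+\beta t)^2$ and the second into $(\beta/d \cdot d)/(2d(1/d)(1+\beta t))$; being careful with the bookkeeping, $c/(2(ct+d)) = (c/d)/(2(ct/d+1)) = \beta/(2(1+\beta t))$. Hence $g(t) = \alpha/(1+\beta t)^2 + \beta/(2(1+\beta t))$, which is exactly \eqref{eq:trivial}. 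The case $c = 0$ (so $\beta = 0$) is included and simply gives the constant solution $g(t) = \alpha$.

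The one genuine subtlety — and the step I would treat most carefully — is verifying that the two-parameter family \eqref{eq:trivial} really exhausts the intersection of the ${\rm GL}(2,\CC)$-orbit of constant solutions with the set of functions holomorphic at $t=0$, rather than just containing it. This amounts to checking that no additional relations among $a,b,c,d,e$ shrink the three-parameter family $\{\det(A) e/(ct+d)^2 + c/(2(ct+d))\}$ (three parameters: the ratio $[c:d]$, plus $e$, plus the overall scaling already partly redundant) down in a way that would make \eqref{eq:trivial} non-representative; but in fact \eqref{eq:trivial} itself has two essential parameters $\alpha, \beta$, and a direct comparison of the general orbit element with \eqref{eq:trivial} under the holomorphy constraint $d \neq 0$ shows they match. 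I would write out this identification explicitly, noting that the ambiguity $(\lambda, c, d) \mapsto (\mu^2\lambda, \mu c, \mu d)$ in the parametrization is precisely what the normalization $d = 1$ (equivalently, passing to $\alpha = \lambda/d^2$, $\beta = c/d$) removes. No hard analysis is involved; the content is entirely the algebraic manipulation of \eqref{eq:GL-action} together with the observation that $f^{AB} = (f^B)^A$, which reduces the orbit of all constants to the orbit of one constant.
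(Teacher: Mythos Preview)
Your proof is correct and follows essentially the same route as the paper: holomorphy at $t=0$ forces $d\neq 0$, then set $\alpha=e/d^2$ and $\beta=c/d$. The paper's proof is this one sentence; your additional discussion of functoriality $f^{AB}=(f^B)^A$ and the ``exhaustion'' concern is harmless but unnecessary, since by the paper's \emph{Definition} the constant solutions are precisely the functions \eqref{eq:210}, so the orbit is already that two-parameter family and nothing needs to be checked beyond the normalization.
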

\begin{proof}
One can set $\alpha:=e/d^2$ and $\beta:=c/d$ in \eqref{eq:210} since $d$ cannot be zero.
\end{proof}
The Taylor expansion at $t=0$ of $f(t)$ in the equation \eqref{eq:trivial} is given by 
\begin{subequations}
\begin{equation*}
f(t)=c_0(\alpha,\beta)+c_1(\alpha,\beta)t+c_2(\alpha,\beta)\frac{t^2}{2} +\dots, 
\end{equation*}
\begin{equation*}
c_0(\alpha,\beta)=\alpha+\frac{\beta}{2},\quad c_1(\alpha,\beta)=-2\alpha\beta-\frac{1}{2}\beta^2,\quad c_2(\alpha,\beta)=6\alpha\beta^2+\beta^3.
\end{equation*}
\end{subequations}
For some $c_0,c_1,c_2\in\CC$, consider the cubic curve in $\CC^2$ defined by 
\begin{equation*}
y^2=4x^3-12c_0 x^2 -6c_1x-\frac{c_2}{2}.
\end{equation*}
Note that if $c_i=c_i(\alpha,\beta)$ for $i=1,2,3$,
then the cubic curve is singular since we have
\begin{equation*}
4x^3-12c_0(\alpha,\beta) x^2 -6c_1(\alpha,\beta)x-\frac{c_2(\alpha,\beta)}{2}
=\frac{1}{2}(2x-6\alpha-\beta)(2x-\beta)^2.
\end{equation*}
\begin{proposition}
Suppose that $c_0,c_1,c_2\in\CC$ satisfy the equation 
$$32(c_1+2c_0^2)^3-(c_2 + 12c_1c_0 + 16c_0^3)^2=0.$$
Then there exist $\alpha(c_0,c_1,c_2),\beta(c_0,c_1,c_2)\in\CC$ such that 
\begin{equation*}
\begin{aligned}
c_0=&\alpha(c_0,c_1,c_2)+\frac{\beta(c_0,c_1,c_2)}{2},\\
c_1=&-2\alpha(c_0,c_1,c_2)\beta(c_0,c_1,c_2)-\frac{1}{2}\beta(c_0,c_1,c_2)^2,\\
c_2=&6\alpha(c_0,c_1,c_2)\beta(c_0,c_1,c_2)^2+\beta(c_0,c_1,c_2)^3,
\end{aligned}
\end{equation*}
and the unique solution $f(t)$ of the differential equation \eqref{eq:WDVV} holomorphic at $t=0$ satisfying
\begin{equation*}
f(0) =c_0,\quad \frac{df}{dt}(0)=c_1,\quad \frac{d^2f}{dt^2}(0)=c_2
\end{equation*}
is given by
\begin{equation*}
f(t) := \frac{\alpha(c_0,c_1,c_2)}{(1+\beta(c_0,c_1,c_2) t)^2}+\frac{\beta(c_0,c_1,c_2)}{2(1+\beta(c_0,c_1,c_2) t)}.
\end{equation*}
\end{proposition}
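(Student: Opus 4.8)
The plan is to reduce the claim to a statement about the discriminant of a cubic and then invoke the previously established parametrization of constant solutions. First I would observe that the cubic curve
\begin{equation*}
y^2=4x^3-12c_0x^2-6c_1x-\tfrac{c_2}{2}
\end{equation*}
is singular precisely when its discriminant vanishes, and I would compute this discriminant in terms of $c_0,c_1,c_2$. After depressing the cubic by the substitution $x\mapsto x+c_0$, the right-hand side becomes $4x^3-6(c_1+2c_0^2)x-(\tfrac{c_2}{2}+6c_1c_0+8c_0^3)$, i.e.\ of the form $4x^3-g_2x-g_3$ with $g_2=6(c_1+2c_0^2)$ and $g_3=\tfrac12(c_2+12c_1c_0+16c_0^3)$. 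The discriminant condition $g_2^3-27g_3^2=0$ then becomes, up to a nonzero constant factor, exactly the stated relation
$$32(c_1+2c_0^2)^3-(c_2+12c_1c_0+16c_0^3)^2=0.$$

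Next I would use this to produce $\alpha,\beta$. Given the discriminant vanishes, the depressed cubic has a repeated root, so there exist $u,v\in\CC$ with $4x^3-g_2x-g_3=\tfrac12(2x-2v)(2x-u)^2$ (the normalization chosen to match the factorization already recorded in the excerpt). Comparing with the factorization there, namely $\tfrac12(2x-6\alpha-\beta)(2x-\beta)^2$ after undoing the shift $x\mapsto x-c_0$, I would read off $\beta$ as the repeated root (shifted back) and then solve for $\alpha$ from $c_0=\alpha+\beta/2$. Concretely one sets $\beta:=\beta(c_0,c_1,c_2)$ to be the repeated root of $4x^3-12c_0x^2-6c_1x-c_2/2$ and $\alpha:=c_0-\beta/2$; the three displayed identities for $c_0,c_1,c_2$ in terms of $\alpha,\beta$ are then forced by the coefficient comparison in the factorization $4x^3-12c_0x^2-6c_1x-\tfrac{c_2}{2}=\tfrac12(2x-6\alpha-\beta)(2x-\beta)^2$, which holds because both sides are monic-up-to-$4$ cubics with the same roots.

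Finally I would verify that the resulting function
\begin{equation*}
f(t):=\frac{\alpha}{(1+\beta t)^2}+\frac{\beta}{2(1+\beta t)}
\end{equation*}
solves \eqref{eq:WDVV} and has the prescribed $2$-jet at $t=0$. Solving \eqref{eq:WDVV} is immediate: by the Corollary such $f$ is in the ${\rm GL}(2,\CC)$-orbit of a constant solution, hence a solution by Proposition~\ref{prop:GL on X}. The $2$-jet claim is precisely the already-computed expansion $c_0=\alpha+\beta/2$, $c_1=-2\alpha\beta-\beta^2/2$, $c_2=6\alpha\beta^2+\beta^3$, which we arranged to hold in the previous step. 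Uniqueness of the holomorphic solution with given $f(0),f'(0),f''(0)$ follows from the recursion relation \eqref{eq:recursion}, which determines all higher $c_n$ from $c_0,c_1,c_2$, together with the convergence coming from the explicit closed form.

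The main obstacle is the bookkeeping in the first step: one must carry out the discriminant computation for the general (non-depressed) cubic and check that it matches the stated polynomial $32(c_1+2c_0^2)^3-(c_2+12c_1c_0+16c_0^3)^2$ on the nose, including the constant factor. A secondary subtlety is that when $\beta=0$ the formula degenerates and one should treat the constant-solution case ($\alpha=c_0$, all of $c_1,c_2$ forced to vanish, consistent with the discriminant relation) separately; and one should note that the repeated root $\beta$ need not be unique when the cubic is a perfect cube, in which case any choice works and $\alpha=c_0-\beta/2$ still satisfies the three identities.
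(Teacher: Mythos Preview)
Your argument is correct and takes a genuinely different route from the paper. The paper introduces Halphen's system
\[
\tfrac{d}{dt}(X_i+X_j)=2X_iX_j\quad (\{i,j\}\subset\{2,3,4\}),
\]
observes that $f=-\tfrac16(X_2+X_3+X_4)$ solves \eqref{eq:WDVV}, identifies $\{-2X_k\}$ with the roots of the cubic $4x^3-12f x^2-6f'x-f''/2$, and then solves Halphen's system explicitly in the degenerate (repeated-root) case to produce the closed form for $f$. Your approach instead stays entirely within material already developed in the paper: you recognise the hypothesis as the vanishing of the discriminant of that same cubic, factor it to extract $\alpha,\beta$, and then invoke the Corollary on the ${\rm GL}(2,\CC)$-orbit of constant solutions together with the recursion \eqref{eq:recursion} for existence and uniqueness. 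This is more economical---no auxiliary ODE system is needed---while the paper's route has the virtue of exhibiting the Halphen connection, which is of independent interest even if not used again here.

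One small bookkeeping slip: in the factorization $\tfrac12(2x-6\alpha-\beta)(2x-\beta)^2$ the repeated root is $x=\beta/2$, not $x=\beta$, so you should set $\beta$ equal to \emph{twice} the repeated root of $4x^3-12c_0x^2-6c_1x-c_2/2$ and then $\alpha=c_0-\beta/2$. With that correction your coefficient comparison goes through exactly as you describe.
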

\begin{proof}
  Consider the system of PDE's called Halphen's system of equations:
  \begin{equation*}\label{eq:Halphen}
    \begin{cases}
      &\frac{d}{dt} (X_2(t) + X_3(t)) = 2X_2(t)X_3(t), \\
      &\frac{d}{dt} (X_3(t) + X_4(t)) = 2X_3(t)X_4(t),\\
      &\frac{d}{dt} (X_4(t) + X_2(t)) = 2X_4(t)X_2(t),
    \end{cases}
  \end{equation*}
  One can check that the function defined by $f(t) := -\frac{1}{6} (X_2(t) + X_3(t)+ X_4(t))$ is a solution of the equation \eqref{eq:WDVV}. 
  Consider the third order equation in $x$:
  $$
    4x^3 - 12 f(t) x^2 - 6f^\prime(t) x - \frac{f^{\prime \prime}(t)}{2}= 0,
  $$
  where $t$ is considered as a parameter. Let $\{x_k(t)\}$ be the triplet of solutions of a this third order equation. By straightforward computations one checks that the unordered triplet $\{-2X_k(t)\}$ is equal to the triplet $\{x_k(t)\}$.
  
  The discriminant $\Delta^Q$ of the third order equation at $t=0$ is equal to $32(c_1+2c_0^2)^3-(c_2 + 12c_1c_0 + 16c_0^3)^2=0$. In this case it is easy to solve Halphen's system to get:
  $$
    X_3(t) = X_4(t) = -\frac{\beta}{1+ \beta t}, \quad X_2(t) = -\frac{\beta}{1+\beta t} -6 \frac{\alpha}{(1+\beta t)^2}.
  $$
  Hence the function $f(t)$ is of the right form and solves equation \eqref{eq:WDVV}.
\end{proof}
\subsection{Special solution}
Under the change of variables $t = 2\pi \sqrt{-1} \tau$ the equation \eqref{eq:WDVV} transforms to:
\begin{equation}\label{eq:WDVV2}
\frac{d^3f(\tau)}{d\tau^3}=-48\pi\sqrt{-1}\frac{d^2f(\tau)}{d\tau^2}f(\tau)+72\pi\sqrt{-1}\left(\frac{df(\tau)}{d\tau}\right)^2.
\end{equation}
\begin{proposition}\label{specialsolution}
The holomorphic function $f^\infty(\tau)$ defined on $\HH$ by:
\begin{equation}\label{def of f infinity}
f^\infty(\tau):=-\frac{1}{24}E_2\left(\tau\right)
\end{equation}
satisfies the differential equation \eqref{eq:WDVV2}. 
Therefore, the holomorphic function $\F^\infty$ on $M^\infty:=\CC^2\times \HH$ given by 
\begin{equation*}
\F^\infty=\frac{1}{2}t_1^2(2\pi\sqrt{-1}\tau)+t_1 t_2^2+t_2^4 f^\infty (\tau)
\end{equation*}
defines on $M^\infty$ a Frobenius structure of rank three and dimension one. 
\end{proposition}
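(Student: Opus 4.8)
The plan is to verify directly that $f^\infty(\tau) = -\frac{1}{24}E_2(\tau)$ solves \eqref{eq:WDVV2}, using only the first Ramanujan identity, and then to invoke the two propositions above (the $t = 2\pi\sqrt{-1}\tau$ change of variables and the equivalence of the WDVV equation with \eqref{eq:WDVV} for the potential \eqref{eq:potential}) to upgrade this to the statement about $\F^\infty$. First I would record from \eqref{eq:E_kDerivatives}(a) that
\begin{equation*}
\frac{1}{2\pi\sqrt{-1}}\frac{dE_2}{d\tau} = \frac{1}{12}\bigl(E_2^2 - E_4\bigr),
\end{equation*}
so that $\frac{dE_2}{d\tau} = \frac{2\pi\sqrt{-1}}{12}(E_2^2 - E_4) = \frac{\pi\sqrt{-1}}{6}(E_2^2 - E_4)$. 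Differentiating once more and substituting the first identity for $\frac{dE_2}{d\tau}$ and the second identity \eqref{eq:E_kDerivatives}(b) for $\frac{dE_4}{d\tau}$ gives $\frac{d^2E_2}{d\tau^2}$ as an explicit polynomial in $E_2, E_4, E_6$ with a factor of $(\pi\sqrt{-1})^2$; one more differentiation, again eliminating all $\tau$-derivatives of $E_2, E_4, E_6$ via \eqref{eq:E_kDerivatives}, yields $\frac{d^3E_2}{d\tau^3}$ as a polynomial in $E_2, E_4, E_6$ times $(\pi\sqrt{-1})^3$.

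Next I would substitute $f = -\frac{1}{24}E_2$ into both sides of \eqref{eq:WDVV2}. The left-hand side is $-\frac{1}{24}\frac{d^3E_2}{d\tau^3}$; the right-hand side is $-48\pi\sqrt{-1}\cdot\bigl(-\frac{1}{24}\bigr)^2 E_2''E_2 + 72\pi\sqrt{-1}\cdot\bigl(-\frac{1}{24}\bigr)^2 (E_2')^2$, where primes denote $d/d\tau$. After clearing the common factor $(\pi\sqrt{-1})^3$ (note the right-hand side carries $\pi\sqrt{-1}$ explicitly and the two remaining derivative factors contribute $(\pi\sqrt{-1})^2$), both sides become polynomials in $E_2, E_4, E_6$ with rational coefficients, and one checks that they are identically equal. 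This is the computational heart of the argument: it is a finite check in the polynomial ring $\QQ[E_2,E_4,E_6]$, and since $E_2, E_4, E_6$ are algebraically independent over $\CC$ one genuinely only needs the coefficients to match monomial-by-monomial. I expect the terms to organize themselves so that the $E_6$ contributions cancel between the two derivative-of-$E_4$ substitutions on the two sides, leaving a Chazy-type identity among the lower monomials; equivalently one may phrase the whole check as the statement that $\gamma := -4f^\infty = \frac{1}{6}E_2$ satisfies Chazy's equation \eqref{eq:Chazy}, which is the classical fact that motivates the whole setup.

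Finally, with the differential equation verified, the Frobenius structure claim is immediate: by the proposition identifying the WDVV equation with \eqref{eq:WDVV} (hence, after $t = 2\pi\sqrt{-1}\tau$, with \eqref{eq:WDVV2}) for a potential of the shape \eqref{eq:potential}, the function
\begin{equation*}
\F^\infty = \frac{1}{2}t_1^2(2\pi\sqrt{-1}\tau) + t_1 t_2^2 + t_2^4 f^\infty(\tau)
\end{equation*}
defines an associative, commutative product; the remaining Frobenius axioms (flat unit $e = \p_1$, flatness of $\eta$ and $C$, and homogeneity under $E = t_1\p_1 + \frac12 t_2\p_2$ with $d=1$) hold for any potential of this form by the general computation in Section~\ref{sec:rank3FM}, since $f^\infty$ is holomorphic on all of $\HH$ so that $M^\infty = \CC^2\times\HH$ is a genuine complex manifold. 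The one point requiring a word is non-degeneracy of $\eta$: from Proposition~\ref{prop:potential}, $\eta_{ij} = \p_1\p_i\p_j\F^\infty$, and for \eqref{eq:potential} this gives the anti-diagonal matrix with entries $\eta(\p_1,\p_t) = 1$, $\eta(\p_2,\p_2) = 2$, which is invertible everywhere; hence $(\eta,\circ,e,E)$ is a Frobenius structure of rank three and dimension one on $M^\infty$. The main obstacle is purely the bookkeeping in the third-derivative computation of Step 2, which I would carry out by hand but present only as "a direct calculation using \eqref{eq:E_kDerivatives}".
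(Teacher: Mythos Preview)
Your proposal is correct and follows exactly the same approach as the paper, which simply states that the result ``follows from a direct calculation by the use of the equations \eqref{eq:E_kDerivatives}.'' You have merely spelled out that calculation (and the routine verification of the remaining Frobenius axioms) in more detail than the paper does.
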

\begin{proof}
This follows from a direct calculation by the use of the equations \eqref{eq:E_kDerivatives}.
\end{proof}
\begin{remark}
It is a well-known consequence of the equations \eqref{eq:E_kDerivatives} that 
the function $\frac{\pi \sqrt{-1}}{3} E_2(\tau)$ satisfies the Chazy equation \eqref{eq:Chazy} with $t=\tau$ 
(cf.\ Appendix~C in \cite{du:1}).
\end{remark}
\begin{proposition}
The holomorphic function $2\pi\sqrt{-1}f^{\infty}(\tau)$ is invariant under the ${\rm SL}(2,\ZZ)$-action \eqref{eq:GL-action}.
\end{proposition}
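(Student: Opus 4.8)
The plan is to deduce the invariance from the quasi-modularity \eqref{eq:modularE2} of the Eisenstein series $E_2$. First I would fix the meaning of ``the ${\rm SL}(2,\ZZ)$-action \eqref{eq:GL-action}'' in this situation: rescaling Proposition~\ref{specialsolution} (multiply the function by $2\pi\sqrt{-1}$ while keeping $\tau$ as the independent variable) shows that $g(\tau) := 2\pi\sqrt{-1}\,f^\infty(\tau)$ is again a solution of \eqref{eq:WDVV} read in the variable $\tau$, so Proposition~\ref{prop:GL on X} applies and the action of $A = \begin{pmatrix} a & b \\ c & d\end{pmatrix} \in {\rm SL}(2,\ZZ)$ is
\begin{equation*}
g^A(\tau) = \frac{1}{(c\tau+d)^2}\,g\!\left(\frac{a\tau+b}{c\tau+d}\right) + \frac{c}{2(c\tau+d)},
\end{equation*}
using $\det(A) = 1$. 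Thus the goal is to show $g^A = g$ for every such $A$.

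Next I would substitute $f^\infty(\tau) = -\tfrac{1}{24}E_2(\tau)$ from \eqref{def of f infinity}, so that $g(\tau) = -\tfrac{\pi\sqrt{-1}}{12}E_2(\tau)$, and rewrite \eqref{eq:modularE2} as
\begin{equation*}
\frac{1}{(c\tau+d)^2}\,E_2\!\left(\frac{a\tau+b}{c\tau+d}\right) = E_2(\tau) + \frac{6c}{\pi\sqrt{-1}\,(c\tau+d)}.
\end{equation*}
Plugging this into the expression for $g^A(\tau)$, the contribution $-\tfrac{\pi\sqrt{-1}}{12}\cdot\tfrac{6c}{\pi\sqrt{-1}(c\tau+d)} = -\tfrac{c}{2(c\tau+d)}$ coming from the anomaly term exactly cancels the cocycle term $+\tfrac{c}{2(c\tau+d)}$ built into \eqref{eq:GL-action}, leaving $g^A(\tau) = -\tfrac{\pi\sqrt{-1}}{12}E_2(\tau) = g(\tau)$. (One could equally reduce to the generators $S$ and $T$ of ${\rm SL}(2,\ZZ)$, but this computation disposes of all $A$ simultaneously.)

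I do not expect a genuine obstacle: once the action is correctly interpreted, the proof is a one-line cancellation. The only point requiring care is the bookkeeping of the constants — the normalization $-\tfrac{1}{24}$ in \eqref{def of f infinity} and the overall factor $2\pi\sqrt{-1}$ in the statement are precisely what is needed to make the two $1$-cocycles (the one appearing in \eqref{eq:GL-action} and the rescaled anomaly in \eqref{eq:modularE2}) agree, so that the ${\rm SL}(2,\ZZ)$-orbit of $g$ degenerates to the single point $\{g\}$.
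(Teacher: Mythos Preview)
Your proof is correct and follows exactly the approach indicated in the paper, which merely says ``This follows from a direct calculation using the modular property \eqref{eq:modularE2} of $E_2(\tau)$.'' You have in fact been more careful than the paper by explicitly justifying why the action \eqref{eq:GL-action} applies to $2\pi\sqrt{-1}f^\infty$ in the variable $\tau$ (namely, that this rescaling turns a solution of \eqref{eq:WDVV2} into a solution of \eqref{eq:WDVV}).
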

\begin{proof}
This follows from a direct calculation using the modular property \eqref{eq:modularE2} of $E_2(\tau)$.
\end{proof}
\subsection{Choice of a primitive form and a ${\rm GL}(2,\CC)$-action}\label{choice}
We may consider the Frobenius manifold $M^\infty$ as a Frobenius submanifold of the one constructed from the invariant theory of the elliptic Weyl group of type $D_4^{(1,1)}$ depending on the particular choice of a vector in a two dimensional vector space (this is a direct consequence of Theorem ~2.7 in \cite{st:2}). 
A Frobenius structure varies according to a choice of this vector which is identified with a cycle in the homology group of the elliptic curve (see also Example~2 in Section 3.3 in \cite{sa:1}). 
We introduce a special class of ${\rm GL}(2,\CC)$-actions on the function $2\pi \sqrt{-1} f^\infty(\tau)$ motivated by this.
Consider a free abelian group $H_\ZZ$ generated by two letters $\alpha,\beta$
\[
H_{\ZZ} := \ZZ \alpha \oplus \ZZ \beta
\]
equipped with a symplectic form $(-,-)$ such that $(\alpha,\beta) = 1$. 
We can identify $H_\ZZ$ with the homology group $H_1(\E_{\sqrt{-1}},\ZZ)$ of an elliptic curve $\E_{\sqrt{-1}}$, 
the fiber at $\sqrt{-1}\in\HH$ of the family of elliptic curves $\pi:\E\longrightarrow \HH$ (see Subsection~\ref{sec:elliptic curve}).
Then 
\[
H^*_\CC := (H_\CC)^*:=(H_\ZZ \otimes_\ZZ \CC)^* = \CC \alpha^\vee \oplus \CC \beta^\vee,
\]
where $\{\alpha^\vee, \beta^\vee\}$ is the dual basis of $\{\alpha,\beta\}$, can be identified with the cohomology group 
$H^1(\E_{\sqrt{-1}},\ZZ)$.
In particular, the relative holomorphic volume form $\Omega\in \Gamma(\HH,\Omega^1_{\E/\HH})$ is described in terms of $\alpha^\vee, \beta^\vee$ as 
\begin{equation*}
\Omega=x(\tau)\left(\alpha^\vee+\tau \beta^\vee\right)
\end{equation*}
for some nowhere vanishing holomorphic function $x(\tau)$ on $\HH$. 
The relative holomorphic volume form $\zeta^\infty=\alpha^\vee+\tau\beta^\vee$ is, very roughly speaking, the {\it primitive form} associated to the choice of the vector $\alpha\in H_\CC$, which satisfies 
\begin{equation*}
\int_{\alpha}\zeta^\infty=1\text{ and }\int_\beta\zeta^\infty=\tau,
\end{equation*}
and gives the Frobenius structure $M^\infty$.
There is a systematic way to obtain a primitive form by the use of the canonical opposite filtration to the Hodge filtration corresponding to a point $\tau_0\in \HH$ as follows.
\begin{proposition}
For $\tau_0\in\HH$ and $\omega_0\in\CC\backslash\{0\}$, 
there exists a unique relative holomorphic volume form $\zeta\in \Gamma(\HH,\Omega^1_{\E/\HH})$ such that 
\begin{equation*}
\int_{\alpha'}\zeta=1,\quad \alpha':=\frac{1}{\omega_0( \bar \tau_0 - \tau_0)}\left(\bar{\tau}_0\alpha-\beta\right).
\end{equation*}
\end{proposition}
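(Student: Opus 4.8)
The plan is to produce the volume form $\zeta$ explicitly as a scalar multiple of the tautological relative form $\zeta^\infty = \alpha^\vee + \tau\beta^\vee$ and then fix the scalar by imposing the normalization $\int_{\alpha'}\zeta = 1$. Concretely, since every relative holomorphic volume form on the family $\pi:\E\to\HH$ is, fiberwise, a constant multiple of $\zeta^\infty$, we may write $\zeta = g(\tau)\,\zeta^\infty$ for a holomorphic function $g$ on $\HH$ (a priori nowhere vanishing if we want $\zeta$ itself to be a volume form, though the existence/uniqueness argument only needs $g$ holomorphic). Then $\int_{\alpha'}\zeta = g(\tau)\int_{\alpha'}\zeta^\infty$, so the condition $\int_{\alpha'}\zeta = 1$ determines $g(\tau)$ uniquely, provided $\int_{\alpha'}\zeta^\infty$ is a nowhere-vanishing holomorphic function of $\tau$; this is the crux and I address it next.

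The key computation is therefore $\int_{\alpha'}\zeta^\infty$. Using $\int_\alpha\zeta^\infty = 1$, $\int_\beta\zeta^\infty = \tau$ and bilinearity of the period pairing in the cycle, together with the definition
\[
\alpha' = \frac{1}{\omega_0(\bar\tau_0 - \tau_0)}\bigl(\bar\tau_0\alpha - \beta\bigr),
\]
one gets
\[
\int_{\alpha'}\zeta^\infty = \frac{1}{\omega_0(\bar\tau_0 - \tau_0)}\bigl(\bar\tau_0\cdot 1 - \tau\bigr) = \frac{\bar\tau_0 - \tau}{\omega_0(\bar\tau_0 - \tau_0)}.
\]
As a function of $\tau\in\HH$ this vanishes only at $\tau = \bar\tau_0$, which lies in the lower half-plane and hence not in $\HH$ (here $\bar\tau_0 - \tau_0 = -2\sqrt{-1}\,\mathrm{Im}(\tau_0)\neq 0$, so the expression is well-defined). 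Therefore $\int_{\alpha'}\zeta^\infty$ is a nowhere-vanishing holomorphic function on $\HH$, and we may — and must — take
\[
g(\tau) = \frac{\omega_0(\bar\tau_0 - \tau_0)}{\bar\tau_0 - \tau}, \qquad \zeta = \frac{\omega_0(\bar\tau_0 - \tau_0)}{\bar\tau_0 - \tau}\bigl(\alpha^\vee + \tau\beta^\vee\bigr).
\]
This $g$ is holomorphic and nowhere zero on $\HH$, so $\zeta$ is a genuine relative holomorphic volume form, and by construction $\int_{\alpha'}\zeta = 1$.

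For uniqueness: any two relative holomorphic volume forms on $\E/\HH$ differ by multiplication by a holomorphic function on $\HH$ (since $\Omega^1_{\E/\HH}$ is locally free of rank one and $\pi_*\Omega^1_{\E/\HH}$ is a line bundle on $\HH$, trivialized by $\zeta^\infty$), so if $\zeta$ and $\zeta'$ both satisfy $\int_{\alpha'}(-) = 1$ then $\zeta' = h(\tau)\zeta$ with $1 = \int_{\alpha'}\zeta' = h(\tau)\int_{\alpha'}\zeta = h(\tau)$, forcing $\zeta' = \zeta$. The only real content is the period computation above; the main (very mild) obstacle is simply to confirm that the linear-in-$\tau$ period function $\bar\tau_0 - \tau$ has no zero on $\HH$, which is immediate since its unique zero $\bar\tau_0$ has negative imaginary part. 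I would also remark, for motivation, that $\alpha'$ is precisely the cycle dual — under the symplectic form — to the one-form whose periods are normalized by $\omega_0$ at the point $\tau_0$, which is why this particular $\zeta$ is the primitive form attached to the opposite filtration at $\tau_0$.
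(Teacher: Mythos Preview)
Your proof is correct and follows exactly the same approach as the paper: write $\zeta$ as a scalar multiple of $\zeta^\infty=\alpha^\vee+\tau\beta^\vee$, compute the period $\int_{\alpha'}\zeta^\infty=\dfrac{\bar\tau_0-\tau}{\omega_0(\bar\tau_0-\tau_0)}$, and invert to obtain $\zeta=\omega_0\dfrac{\bar\tau_0-\tau_0}{\bar\tau_0-\tau}(\alpha^\vee+\tau\beta^\vee)$. The paper's proof records only this final formula (``Some calculation yields \ldots''), so your write-up simply spells out the details the paper omits, plus a clean uniqueness argument.
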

\begin{proof}
Some calculation yields
\[
\zeta =\omega_0\frac{ \bar \tau_0 - \tau_0}{\bar \tau_0 - \tau}\left(\alpha^\vee+\tau \beta^\vee\right).
\]
\end{proof}
This holomorphic volume form $\zeta$ is the primitive form uniquely determined by the choice of 
the vector $\alpha'\in H_\CC$. We first fixe $\tau_0\in\HH$ and $\omega_0\in\CC\backslash\{0\}$ so that we have
\begin{equation*}
\int_\alpha\zeta =\omega_0\text{ and }\int_\beta\zeta=\omega_0\tau_0\quad \text{at\ }\tau=\tau_0. 
\end{equation*}
Next we choose $\beta'\in H_\CC$ so that $\int_{\beta'}\zeta=0$ at $\tau=\tau_0$ and $(\alpha',\beta')=1$. It is easy to see that 
\[
\beta':=-\omega_0\left({\tau}_0\alpha-\beta\right).
\]
As the flat coordinate $2\pi\sqrt{-1}\tau$ of the Frobenius manifold $M^\infty$ associated to the primitive form $\zeta^\infty$, define the coordinate $t(\tau)$ by the period 
\begin{equation*}
\frac{t(\tau)}{2\pi \sqrt{-1}} :=\int_{\beta'}\zeta=  2\sqrt{-1}\omega_0^2{\rm Im}(\tau_0) \frac{\tau_0 - \tau}{\bar \tau_0 - \tau}.
\end{equation*}
This motivates the following ${\rm GL}(2,\CC)$-action $A^{(\tau_0,\omega_0)}$ and the Frobenius manifold $M^{(\tau_0,\omega_0)}$.
\begin{definition}
Choose $\tau_0\in\HH$ and $\omega_0\in\CC\backslash\{0\}$. 
\begin{enumerate}
\item
Define a holomorphic function $f^{(\tau_0,\omega_0)}(t)$ on $\{t\in\CC~|~|t|<|-4\pi\omega_0^2{\rm Im}(\tau_0)|\}$ applying the ${\rm GL}(2,\CC)$-action \eqref{eq:GL-action} specified by
\begin{equation*}
A^{(\tau_0,\omega_0)} :=
\begin{pmatrix}
\dfrac{\bar{\tau}_0}{4\pi\omega_0{\rm Im}(\tau_0)} & \omega_0 \tau_0\\
\dfrac{1}{4\pi\omega_0{\rm Im}(\tau_0)} & \omega_0
\end{pmatrix}
\end{equation*}
to the function $2\pi \sqrt{-1} f^\infty(\tau)$.
\item
Define complex numbers $c_i(\tau_0,\omega_0)$, $i\in\ZZ_{\ge 0}$, by the coefficients of the Taylor expansion of $f^{(\tau_0,\omega_0)}(t)$ at $t=0:$
\begin{equation*}
f^{(\tau_0,\omega_0)}(t)=\sum_{n=0}^\infty \frac{c_n(\tau_0,\omega_0)}{n!}t^n.
\end{equation*} 
\item 
Denote by $M^{(\tau_0,\omega_0)}:=\CC^2\times \{t\in\CC~|~|t|<|-4\pi\omega_0^2{\rm Im}(\tau_0)|\}$ the Frobenius manifold given by 
the Frobenius potential 
\begin{equation*}
\F^{(\tau_0,\omega_0)}=\frac{1}{2}t_1^2 t+t_1 t_2^2+t_2^4 f^{(\tau_0,\omega_0)}(t).
\end{equation*}
\end{enumerate}
\end{definition}
\section{Frobenius manifolds $M^{(\tau_0,\omega_0)}$ defined over $\KK$ via modular forms}\label{sec:FMoverK}

The essential technique dealing with the ${\rm GL}(2,\CC)$-action is provided by the theory of modular forms. We use it to give a complete classification of the Frobenius manifolds $M^{(\tau_0,\omega_0)}$ defined over $\KK \subset \CC$.

\subsection{Classification of $M^{(\tau_0,\omega_0)}$ defined over $\KK$}
\begin{theorem}\label{theorem1}
Let $\KK\subset \CC$ be a field. Let $\tau_0\in\HH$ and $\omega_0\in\CC\backslash\{0\}$. 
The following are equivalent$:$
\begin{enumerate}
\item
The Frobenius manifold $M^{(\tau_0,\omega_0)}$ is defined over $\KK$.
\item
All the coefficients of $f^{(\tau_0,\omega_0)}(t)$ series expansion are in $\KK$.
\item 
We have
\begin{equation*} 
E_2^*(\tau_0)\in \KK\omega_0^2,
\quad E_4(\tau_0)\in\KK\omega_0^4,
\quad E_6(\tau_0) \in\KK\omega_0^6.
\end{equation*}
\item
Let $\partial$ be the almost holomorphic derivative defined by \eqref{def of almost holomorphic derivative}. We have
\begin{equation*} 
-\frac{1}{24}E_2^*(\tau_0)\in \KK\omega_0^2,
\quad -\frac{1}{24}\p E_2^*(\tau_0)\in\KK\omega_0^4,
\quad -\frac{1}{24}\p^2E_2^*(\tau_0) \in\KK\omega_0^6.
\end{equation*}
\item
We have
\begin{equation*}
E_2^*(\tau_0)\in \KK\omega_0^2, \quad \E_{\tau_0} \ \text{is defined over} \ \KK.
\end{equation*}

\end{enumerate}
\end{theorem}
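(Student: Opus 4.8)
The plan is to run the circle of implications through the explicit identity
\begin{equation*}
c_n(\tau_0,\omega_0)=-\frac{1}{24\,\omega_0^{2n+2}}\,\partial^n E_2^*(\tau_0),\qquad n\ge 0 ,
\end{equation*}
which reduces (ii), (iii), (iv) and (v) to one another almost formally, leaving (i)$\Leftrightarrow$(ii) as the substantive point. To prove the identity I would first verify it for $n\le 2$ directly from the construction of $f^{(\tau_0,\omega_0)}$: writing $A^{(\tau_0,\omega_0)}=\bigl(\begin{smallmatrix}a&b\\c&d\end{smallmatrix}\bigr)$ one computes $\det A^{(\tau_0,\omega_0)}=-\sqrt{-1}/(2\pi)$, so that $2\pi\sqrt{-1}\,\det A^{(\tau_0,\omega_0)}=1$; substituting into \eqref{eq:GL-action}, differentiating at $t=0$, and using the Ramanujan identities \eqref{eq:E_kDerivatives} together with the definition of $E_2^*$ to collect all the $\mathrm{Im}(\tau_0)^{-1}$-terms, one obtains $c_0=-\tfrac1{24\omega_0^2}E_2^*(\tau_0)$, $c_1=-\tfrac1{288\omega_0^4}\bigl(E_2^*(\tau_0)^2-E_4(\tau_0)\bigr)$ and a similar cubic expression for $c_2$, which by the Propositions evaluating $\partial E_2^*$ and $\partial^2 E_2^*$ agree with the right-hand side above.

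For the induction I would use the observation — immediate from \eqref{eq:E_kDerivatives} and \eqref{def of almost holomorphic derivative} — that the triple $(E_2^*,E_4,E_6)$ under the almost holomorphic derivative $\partial$ obeys the \emph{same} differential system as $(E_2,E_4,E_6)$ under $\tfrac1{2\pi\sqrt{-1}}\tfrac{d}{d\tau}$. Since $f^\infty=-\tfrac1{24}E_2$ solves \eqref{eq:WDVV} with $\tfrac{d}{dt}=\tfrac1{2\pi\sqrt{-1}}\tfrac{d}{d\tau}$ (Proposition~\ref{specialsolution}, read through $t=2\pi\sqrt{-1}\tau$), the almost holomorphic ``avatar'' $-\tfrac1{24}E_2^*$ solves the $\partial$-analogue of \eqref{eq:WDVV}, and because $\partial$ is a weight-graded derivation the numbers $\tilde c_n:=-\tfrac1{24}\partial^n E_2^*(\tau_0)$ satisfy the recursion \eqref{eq:recursion}. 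On the other hand $f^{(\tau_0,\omega_0)}$ solves \eqref{eq:WDVV} by Proposition~\ref{prop:GL on X}, so the $c_n(\tau_0,\omega_0)$ satisfy \eqref{eq:recursion}; as \eqref{eq:recursion} is homogeneous under $c_n\mapsto\omega_0^{2n+2}c_n$, the sequence $\omega_0^{2n+2}c_n(\tau_0,\omega_0)$ satisfies it as well, and since it agrees with $(\tilde c_n)$ for $n\le 2$ the two coincide for all $n$.

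Granting the identity, (ii)$\Leftrightarrow$(iv) is immediate: \eqref{eq:recursion} exhibits $c_n$ for $n\ge 3$ as an integer polynomial in $c_0,c_1,c_2$, so (ii) is the statement $c_0,c_1,c_2\in\KK$, which by the identity is precisely (iv). Then (iv)$\Leftrightarrow$(iii) holds because the Propositions for $\partial E_2^*$ and $\partial^2E_2^*$ present $\bigl(-\tfrac1{24}E_2^*,-\tfrac1{24}\partial E_2^*,-\tfrac1{24}\partial^2E_2^*\bigr)$ as an invertible $\QQ$-polynomial change of variables of $(E_2^*,E_4,E_6)$ that is triangular for the $\omega_0$-grading of weights $2,4,6$, so the membership conditions in $\KK\omega_0^2,\KK\omega_0^4,\KK\omega_0^6$ transform into one another. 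Finally (iii)$\Leftrightarrow$(v) is the Weierstrass dictionary: when $\E_{\tau_0}$ is presented with the holomorphic volume form supplied by the primitive form $\zeta$, i.e. as $\CC/(\ZZ\omega_0\oplus\ZZ\omega_0\tau_0)$ with periods $\omega_0,\omega_0\tau_0$, its invariants $g_2,g_3$ are, up to nonzero rational constants and fixed powers of $\pi$, equal to $\omega_0^{-4}E_4(\tau_0)$ and $\omega_0^{-6}E_6(\tau_0)$; hence ``$\E_{\tau_0}$ defined over $\KK$'' in this normalized sense unwinds to $E_4(\tau_0)\in\KK\omega_0^4$ and $E_6(\tau_0)\in\KK\omega_0^6$, and joined to the clause $E_2^*(\tau_0)\in\KK\omega_0^2$ shared by both conditions this is exactly (iii).

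There remains (i)$\Leftrightarrow$(ii), which is where the genuine work lies. The direction (ii)$\Rightarrow$(i) is trivial, as $\F^{(\tau_0,\omega_0)}=\tfrac12 t_1^2 t+t_1t_2^2+t_2^4f^{(\tau_0,\omega_0)}(t)$ then exhibits $M^{(\tau_0,\omega_0)}$ over $\KK$ and holomorphic at the origin. For (i)$\Rightarrow$(ii) I would argue that no change of flat coordinates can render the potential $\KK$-rational unless the $c_n(\tau_0,\omega_0)$ already lie in $\KK$. Given a $\KK$-rational presentation $(s_1,s_2,s_3,\widetilde\F)$, the Euler field $E$ — intrinsic and pinned down by $E\widetilde\F=2\widetilde\F+(\text{quadratic})$ — has $\KK$-coefficients there, with rational eigenvalues $1,\tfrac12,0$ on flat one-forms; passing over $\KK$ to coordinates adapted to the $E$-eigenspaces makes $\eta$ block-diagonal over $\KK$, and bringing the potential to the normal form $\tfrac12t_1^2t+t_1t_2^2+t_2^4f(t)$ then needs only coordinate dilations. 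One of these, the rescaling of the weight-$\tfrac12$ coordinate, introduces a square root $\sqrt{q}$ with $q\in\KK^\times$; but $f$ enters the potential only through $t_2^2$ and $t_2^4$, so this square root cancels and the $c_n(\tau_0,\omega_0)$ stay in $\KK$. The last subtlety is the freedom to translate the weight-$0$ coordinate $t\mapsto t+\nu_0$, which shifts the centre of expansion of $f$; here one uses that $M^{(\tau_0,\omega_0)}\cong M^{(\tau_0',\omega_0')}$ holds exactly when $(\tau_0',\omega_0')=(\gamma\tau_0,(c\tau_0+d)\omega_0)$ for some $\bigl(\begin{smallmatrix}a&b\\c&d\end{smallmatrix}\bigr)\in{\rm SL}(2,\ZZ)$ — a group action under which conditions (ii)--(v) are all visibly invariant — so that, subject to the requirement ``defined at the origin'', only the translations $\nu_0$ coming from ${\rm SL}(2,\ZZ)$ can occur and the conclusion is unaffected. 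Making this bookkeeping precise — equivalently, showing that the ${\rm GL}(2,\CC)$-action of Proposition~\ref{prop:GL on X}, restricted to solutions holomorphic at $0$ inside the orbit of $2\pi\sqrt{-1}f^\infty$, is parametrized exactly by $(\tau_0,\omega_0)$ modulo ${\rm SL}(2,\ZZ)$ — is the main obstacle of the proof.
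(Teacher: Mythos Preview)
Your computational core—evaluating $c_0,c_1,c_2$ from the explicit formula for $f^{(\tau_0,\omega_0)}$ via the Ramanujan identities and matching them to $E_2^*,E_4,E_6$ and to $\partial^n E_2^*$—is exactly the paper's argument; the paper derives the same triangular relations and then the almost-holomorphic form. The paper works only with $n\le 2$ and invokes the recursion \eqref{eq:recursion}; your extension of the identity $c_n=-\tfrac1{24\,\omega_0^{2n+2}}\partial^nE_2^*(\tau_0)$ to all $n$ is correct but unnecessary, since \eqref{eq:recursion} already reduces (ii) to the first three coefficients.

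Where you diverge is (i)$\Leftrightarrow$(ii). The paper does this in one sentence: any $\KK$-rational flat presentation, once put in Euler-graded normal form, reads $\tfrac12\eta_1 t_1^2\tilde t+\eta_2 t_1\tilde t_2^2+\tilde t_2^4\tilde f(\tilde t)$ with $\eta_1,\eta_2\in\KK$ and $\tilde f\in\KK\{\tilde t\}$, and comparing with $\F^{(\tau_0,\omega_0)}$ forces $t=\eta_1\tilde t$, $t_2^2=\eta_2\tilde t_2^2$, $f^{(\tau_0,\omega_0)}=\eta_2^{-2}\tilde f$. Your long detour through translations and the ${\rm SL}(2,\ZZ)$-orbit classification is not what the paper does, and your proposed resolution of the translation freedom has a genuine gap: a shift $t\mapsto t+\nu_0$ applied to $f^{(\tau_0,\omega_0)}$ does \emph{not} yield another $f^{(\tau_1,\omega_1)}$. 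Indeed, $A^{(\tau_0,\omega_0)}\bigl(\begin{smallmatrix}1&\nu_0\\0&1\end{smallmatrix}\bigr)$ has first column proportional to $(\bar\tau_0,1)^T$, so for it to equal some $A^{(\tau_1,\omega_1)}$ one needs $\bar\tau_1=\bar\tau_0$, and then matching the second column forces $\nu_0=0$. Thus Proposition~\ref{proposition: SL2Z action} gives no constraint on $\nu_0$, and your claim that ``only the translations $\nu_0$ coming from ${\rm SL}(2,\ZZ)$ can occur'' does not go through.

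On (iii)$\Leftrightarrow$(v): the paper's proof never treats (v) explicitly—it ends with ``This proves the theorem'' after establishing (i)--(iv). Your argument reads ``$\E_{\tau_0}$ defined over $\KK$'' as a statement about the particular Weierstrass model with periods $(\omega_0,\omega_0\tau_0)$; that is stronger than the definition in Subsection~\ref{sec:elliptic curve} (existence of \emph{some} $g_2,g_3\in\KK$ with $E_{g_2,g_3}\cong\E_{\tau_0}$). Under the literal definition (v)$\Rightarrow$(iii) already fails at $\tau_0=\sqrt{-1}$: there $E_2^*=0$ makes the first clause of (v) vacuous and $\E_{\sqrt{-1}}$ is defined over $\QQ$ for every $\omega_0$, whereas $E_4(\sqrt{-1})\in\KK\omega_0^4$ is a genuine constraint. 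So neither your reinterpretation nor the paper's silence settles (v) cleanly.
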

\begin{proof}
By definition, the Frobenius manifold $M^{(\tau_0,\omega_0)}$ is defined over $\KK$ if and only if 
there are flat coordinates $t_1,\widetilde{t}_2,\widetilde{t}$ such that the Frobenius potential is given by
\begin{equation*}
\F^{(\tau_0,\omega_0)}=\frac{1}{2}\eta_1{t}_1^2 \widetilde{t}+\eta_2{t}_1 \widetilde{t}_2^2+\widetilde{t}_2^4 \widetilde{f}(\widetilde{t})\quad \text{for some }\eta_1,\eta_2\in\KK
\text{ and }\widetilde{f}(\widetilde{t})\in\KK\{t\}.
\end{equation*}
However, this immediately implies that $t_2^2=\eta_2\widetilde{t}_2^2$, $t=\eta_1\widetilde{t}$ and $f^{(\tau_0,\omega_0)}(t)=\eta_2^{-2}\widetilde{f}(\widetilde{t})$, and hence the equivalence between the conditions (i) and (ii).

Due to the recursion relation \eqref{eq:recursion} to get (iii) it is enough to check that $c_i(\tau_0,\omega_0) \in \KK$ for $ 2 \ge i \ge 0$.
By definition of $f^\infty(\tau)$ (see \eqref{def of f infinity}), we have
\[
f^{(\tau_0,\omega_0)}(t) = -\frac{(4\pi\omega_0^2{\rm Im}(\tau_0))^2}{24\omega_0^2 (t+4\pi\omega_0^2{\rm Im}(\tau_0))^2} E_2\left(\frac{\bar{\tau_0}t +4\pi\omega_0^2{\rm Im}(\tau_0)\tau_0}{t+4\pi\omega_0^2{\rm Im}(\tau_0)} \right) 
+ \frac{1}{2(t+4\pi\omega_0^2{\rm Im}(\tau_0))}.
\]
Setting $t = 0$, we get
\[
c_0(\tau_0,\omega_0) = -\frac{1}{24\omega_0^2} E_2(\tau_0)+ \frac{1}{8\pi\omega_0^2{\rm Im}(\tau_0)} 
= -\frac{1}{24\omega_0^2} \left( E_2(\tau_0)-\frac{3}{\pi {\rm Im}(\tau_0)}\right).
\]
Using the formula \eqref{eq:E_kDerivatives}, we compute the derivative of $f^{(\tau_0,\omega_0)}(t)$ at $t=0$ and we obtain
\begin{align*}
c_1(\tau_0,\omega_0) =& \frac{1}{12\omega_0^2(4\pi\omega_0^2{\rm Im}(\tau_0))} E_2(\tau_0) - \frac{1}{288\omega_0^4} 
\left( E_2(\tau_0)^2 - E_4(\tau_0)\right)  - \frac{1}{2(4\pi\omega_0^2{\rm Im}(\tau_0))^2}\\
=&-2\left(-\frac{1}{24\omega_0^2} E_2(\tau_0)+ \frac{1}{8\pi\omega_0^2{\rm Im}(\tau_0)}\right)^2
+\frac{1}{288\omega_0^4}E_4(\tau_0)\\
=& -2c_0(\tau_0,\omega_0)^2+\frac{1}{288\omega_0^4}E_4(\tau_0).
\end{align*}
In a similar way, after some calculations, we get
\[
c_2(\tau_0,\omega_0) = -\frac{1}{864\omega_0^6} E_6(\tau_0) - 12 c_0(\tau_0,\omega_0) c_1(\tau_0,\omega_0) - 16 c_0(\tau_0,\omega_0)^3.
\]
To summarize, we obtain 
\begin{subequations}\label{eq:rationalityViaEisenstein}
\begin{equation} 
E_2^*(\tau_0)=-24c_0(\tau_0,\omega_0)\omega_0^2,
\end{equation}
\begin{equation} 
E_4(\tau_0)=288\left(c_1(\tau_0,\omega_0) + 2c_0(\tau_0,\omega_0)^2\right)\omega_0^4,
\end{equation}
\begin{equation}
E_6(\tau_0) =-864\left(c_2(\tau_0,\omega_0)+12c_0(\tau_0,\omega_0)c_1(\tau_0,\omega_0) + 16 c_0(\tau_0,\omega_0)^3\right)\omega_0^6.
\end{equation}
\end{subequations}
Equivalently, we have 
\begin{subequations}
\begin{equation} 
E_2^*(\tau_0)=-24c_0(\tau_0,\omega_0)\omega_0^2,
\end{equation}
\begin{equation} 
\p E_2^*(\tau_0)=-24c_1(\tau_0,\omega_0)\omega_0^4,
\end{equation}
\begin{equation} 
\p^2E_2^*(\tau_0)=-24c_2(\tau_0,\omega_0)\omega_0^6.
\end{equation}
\end{subequations}
This proves the theorem.
\end{proof}
\subsection{Examples}\label{examples}
\begin{proposition}[cf.\ Lemma~3.2 in \cite{Ma}] \label{proposition: real E_k}
The equation
\begin{equation}\label{eq:zeros}
E_2^*(\tau)= 0
\end{equation}
holds if and only if $\tau\in {\rm SL}(2,\ZZ)\sqrt{-1}$ or $\tau\in{\rm SL}(2,\ZZ)\rho$ where $\rho:=\exp \left(\frac{2\pi \sqrt{-1}}{3} \right)$. 
\end{proposition}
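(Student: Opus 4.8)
The plan is to prove the ``if'' direction by an elementary symmetry argument and the ``only if'' direction by rewriting the vanishing of $E_2^*$ as a critical‑point condition for an explicit ${\rm SL}(2,\ZZ)$‑invariant real‑analytic function on $\HH$, and then locating those critical points. For the ``if'' part: since $E_2^*$ is an almost holomorphic modular form of weight two, the set $\{E_2^*=0\}$ is ${\rm SL}(2,\ZZ)$‑invariant, so it suffices to treat $\tau=\sqrt{-1}$ and $\tau=\rho$. The matrix $\left(\begin{smallmatrix}0&-1\\1&0\end{smallmatrix}\right)$ fixes $\sqrt{-1}$ with automorphy factor $c\tau+d=\sqrt{-1}$, so modularity gives $E_2^*(\sqrt{-1})=(\sqrt{-1})^{-2}E_2^*(\sqrt{-1})=-E_2^*(\sqrt{-1})$, whence $E_2^*(\sqrt{-1})=0$; likewise an order‑six element of ${\rm SL}(2,\ZZ)$ fixes $\rho$ with automorphy factor a primitive sixth root of unity $\zeta$, and $E_2^*(\rho)=\zeta^{-2}E_2^*(\rho)$ with $\zeta^{-2}\ne 1$ forces $E_2^*(\rho)=0$.

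For the ``only if'' direction I would use the identity
\[
E_2^*(\tau)=\frac{1}{2\pi\sqrt{-1}}\,\frac{\p}{\p\tau}\log\!\Big(|\Delta(\tau)|^2\,({\rm Im}\,\tau)^{12}\Big),
\]
where $\Delta(\tau)=q\prod_{n\ge 1}(1-q^n)^{24}$ is the modular discriminant; it follows from the classical $\frac{1}{2\pi\sqrt{-1}}\Delta'(\tau)/\Delta(\tau)=E_2(\tau)$ together with $\frac{\p}{\p\tau}\log({\rm Im}\,\tau)=\frac{1}{2\sqrt{-1}\,{\rm Im}\,\tau}$. Setting $u(\tau):=\log\big(|\Delta(\tau)|^2({\rm Im}\,\tau)^{12}\big)$, which is a real‑valued, real‑analytic, ${\rm SL}(2,\ZZ)$‑invariant function on $\HH$ (finite everywhere, $\Delta$ being nonvanishing), and noting that $\p_{\bar\tau}u=\overline{\p_\tau u}$ because $u$ is real, one gets: $E_2^*(\tau_0)=0$ if and only if $\tau_0$ is a critical point of $u$. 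So the problem becomes to show that the critical locus of $u$ on $\HH$ equals ${\rm SL}(2,\ZZ)\sqrt{-1}\cup{\rm SL}(2,\ZZ)\rho$. Two structural facts help: $u\to-\infty$ at the cusp (since $|\Delta(\tau)|^2\sim e^{-4\pi\,{\rm Im}\,\tau}$), so all critical points lie in a compact part of the modular curve; and $(\p_x^2+\p_y^2)u=-12/({\rm Im}\,\tau)^2<0$, so $u$ is strictly superharmonic and hence has no local minimum.

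To finish, I would analyze $u$ (equivalently $E_2^*$) on the standard fundamental domain. On its boundary the reflection $E_2^*(-\bar\tau)=\overline{E_2^*(\tau)}$ (valid since $E_2^*$ has real Fourier coefficients) makes $E_2^*$ real on the imaginary axis and on the lines ${\rm Re}\,\tau=\pm\tfrac12$, and real up to the nonzero factor $\tau^{-1}$ on the arc $|\tau|=1$, so on each boundary piece the zeros are governed by a one‑variable sign analysis; for instance $\frac{d}{dy}E_2^*(\sqrt{-1}\,y)=48\pi\sum_{n\ge 1}n\sigma_1(n)e^{-2\pi ny}+\frac{3}{\pi y^2}>0$, so $E_2^*(\sqrt{-1}\,y)$ is strictly increasing and vanishes only at $y=1$. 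For the interior one shows $E_2^*\neq 0$ using that the Fourier tail of ${\rm Im}\,E_2^*$ is dominated by its leading term $-24e^{-2\pi\,{\rm Im}\,\tau}\sin(2\pi\,{\rm Re}\,\tau)$ once ${\rm Im}\,\tau\ge\tfrac{\sqrt3}{2}$, which keeps ${\rm Im}\,E_2^*$ nonzero away from ${\rm Re}\,\tau\in\{0,\pm\tfrac12\}$, the remaining slice (part of the imaginary axis) being covered by the monotonicity above. The main obstacle is exactly this exclusion of interior zeros: strict superharmonicity of $u$ rules out local minima but does \emph{not} by itself forbid several local maxima, so either a quantitative Fourier estimate as above or a careful second‑order analysis of $u$ at the elliptic point $\sqrt{-1}$ (fed into an Euler‑characteristic count of critical points on the modular curve, treating the cusp and the two cone points with care) is needed to complete the argument.
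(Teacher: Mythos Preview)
The paper does not give its own proof of this proposition; it simply cites Masser \cite[Lemma~3.2]{Ma}. So there is no ``paper's proof'' to compare against, and your proposal is supplying an argument where the paper offers only a reference.

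Your ``if'' direction is correct and standard. For the ``only if'' direction your framework is the right one: the identification of $\{E_2^*=0\}$ with the critical locus of $u(\tau)=\log\big(|\Delta(\tau)|^2({\rm Im}\,\tau)^{12}\big)$ is correct, as is the superharmonicity computation and the boundary behaviour at the cusp. The Fourier estimate you sketch for the interior can in fact be made rigorous: using the elementary bound $|\sin(2\pi n x)|\le n\,|\sin(2\pi x)|$, the tail of ${\rm Im}\,E_2^*$ is bounded by $|\sin(2\pi x)|\sum_{n\ge 2}n\sigma_1(n)e^{-2\pi ny}$, and for $y\ge\sqrt{3}/2$ this sum is well below $e^{-2\pi y}$, so ${\rm Im}\,E_2^*\ne 0$ whenever $0<|{\rm Re}\,\tau|<\tfrac12$.

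The place where your outline is still genuinely incomplete is the boundary piece ${\rm Re}\,\tau=\pm\tfrac12$. There ${\rm Im}\,E_2^*$ vanishes identically (since $q=-e^{-2\pi y}$ is real), so your ``interior'' estimate says nothing, and you are back to a one‑variable real analysis of $E_2^*(\pm\tfrac12+\sqrt{-1}\,y)$. Unlike the imaginary axis, the $y$‑derivative here has alternating signs in its $q$‑expansion, so the clean monotonicity argument you gave for $x=0$ does not carry over verbatim; you need either a short estimate showing the $n=1$ term still dominates the derivative for $y\ge\sqrt{3}/2$, or an alternative sign argument on that line. Once that is done, together with the arc $|\tau|=1$ (where your reduction to a real problem via $\tau^{-2}E_2^*(\tau)\in\RR$ is fine), the proof closes. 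In short: the strategy is sound and close to complete; the only real gap is the line ${\rm Re}\,\tau=\pm\tfrac12$, which you mention but do not yet handle.
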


The values of the Eisenstein series at $\tau=\sqrt{-1}$ are
\begin{equation}\label{eq:i}
E_2(\sqrt{-1}) = \frac{3}{\pi}, \ E_4(\sqrt{-1}) = 3 \frac{\Gamma\left(\frac{1}{4}\right)^8}{64 \pi^6}, \ E_6(\sqrt{-1}) = 0.
\end{equation}
If 
\begin{equation*}
\omega_0\in\QQ \frac{\Gamma\left(\frac{1}{4}\right)^2}{4\pi^{\frac{3}{2}}}
\end{equation*}
then $c_0(\sqrt{-1},\omega_0)=c_2(\sqrt{-1},\omega_0)=0$ and $c_1(\sqrt{-1},\omega_0)\in\QQ$.
The values of the Eisenstein series at $\tau=\rho$ are
\begin{equation}\label{eq:rho}
E_2(\rho) = \frac{2\sqrt{3}}{\pi}, \ E_4(\rho) = 0, \ E_6(\rho) = \frac{27}{2} \frac{\Gamma\left(\frac{1}{3}\right)^{18}}{2^8\pi^{12}}.
\end{equation}
If 
\begin{equation*}
\omega_0\in\QQ \frac{\Gamma\left(\frac{1}{3}\right)^3}{4\pi^2}
\end{equation*}
then $c_0(\rho,\omega_0)=c_1(\rho,\omega_0)=0$ and $c_2(\rho,\omega_0)\in\QQ$.

\section{${\rm SL}$-action on the set of Frobenius manifolds $M^{(\tau_0,\omega_0)}$}\label{sec:Symmetries}
Let $A:=
\begin{pmatrix}
a &b\\
c & d
\end{pmatrix}$
be an element of ${\rm SL}(2,\RR)$. 
The correspondence
\begin{equation*}
\tau_0\mapsto \tau_1:=\frac{a\tau_0+b}{c\tau_0+d},\quad \omega_0\mapsto\omega_1:=(c\tau_0+d)\omega_0
\end{equation*}
defines a ${\rm SL}(2,\RR)$-action on the set 
$\left\{(\tau_0,\omega_0)~\vert~\tau_0\in\HH,\ \omega_0\in\CC\setminus\{0\}\right\}$.
This is exactly the ${\rm SL}(2,\RR)$-action induced by \eqref{eq:GL-action}
since 
\[
A
  \begin{pmatrix}
  \dfrac{\bar{\tau}_0}{4\pi\omega_0{\rm Im}(\tau_0)} & \omega_0\tau_0\\
  \dfrac{1}{4\pi\omega_0{\rm Im}(\tau_0)} & \omega_0
  \end{pmatrix}
=\begin{pmatrix}
  \dfrac{(a \bar \tau_0 + b)}{4\pi\omega_0{\rm Im}(\tau_0)} & (a\tau_0 + b) \omega_0\\
  \dfrac{(c \bar \tau_0 + d)}{4\pi\omega_0{\rm Im}(\tau_0)} & (c\tau_0+d)\omega_0
  \end{pmatrix}
=
 \begin{pmatrix}
  \dfrac{\bar{\tau}_1}{4\pi\omega_1{\rm Im}(\tau_1)} & \omega_1\tau_1\\
  \dfrac{1}{4\pi\omega_1{\rm Im}(\tau_1)} & \omega_1
  \end{pmatrix}.
\]
\subsection{${\rm SL(2,\ZZ)}$-action}
The equations \eqref{eq:rationalityViaEisenstein} yield the following.
\begin{proposition}\label{proposition: SL2Z action}
Let $\tau_0,\tau_1\in\HH$ and $\omega_0,\omega_1\in\CC\backslash\{0\}$. The following are equivalent$:$
\begin{enumerate}
\item
There is an isomorphism of Frobenius manifolds $M^{(\tau_0,\omega_0)}\cong M^{(\tau_1,\omega_1)}$.
\item
The equality $f^{(\tau_0,\omega_0)}(t)=f^{(\tau_1,\omega_1)}(t)$ holds.
\item
There exists an element 
$\begin{pmatrix}
    a & b\\
    c & d
\end{pmatrix} \in {\rm SL}(2, \ZZ)
$ such that 
\begin{equation*}
\tau_1=\frac{a\tau_0+b}{c\tau_0+d},\quad \omega_1^k=(c\tau_0+d)^k\omega_0^k,
\end{equation*}
where $k=4$ if $\tau_0\in {\rm SL}(2, \ZZ)\sqrt{-1}$, $k=6$ if $\tau_0\in {\rm SL}(2, \ZZ)\rho$ and $k=2$ otherwise. 
\end{enumerate}
\end{proposition}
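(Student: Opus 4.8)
The plan is to prove the cycle of implications (iii) $\Rightarrow$ (i) $\Rightarrow$ (ii) $\Rightarrow$ (iii), which is the most economical route given what is already available. The implication (i) $\Rightarrow$ (ii) is easy: if $M^{(\tau_0,\omega_0)} \cong M^{(\tau_1,\omega_1)}$ then both Frobenius manifolds have the same flat coordinates up to the normalization freedom described in the proof of Theorem~\ref{theorem1}, so the potentials agree after rescaling, and since the normalizations $\eta_1,\eta_2$ are forced by requiring $E$ and $e$ to take the standard form, the potentials must literally coincide; comparing the $t_2^4$-coefficients gives $f^{(\tau_0,\omega_0)}(t) = f^{(\tau_1,\omega_1)}(t)$. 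Similarly (ii) $\Rightarrow$ (i) is immediate since equal potentials give isomorphic (indeed equal) Frobenius manifolds.

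The substance is therefore the equivalence of (ii) and (iii). First I would show (iii) $\Rightarrow$ (ii). Assume there is $\begin{pmatrix} a & b \\ c & d \end{pmatrix} \in \mathrm{SL}(2,\ZZ)$ with $\tau_1 = \frac{a\tau_0+b}{c\tau_0+d}$ and $\omega_1^k = (c\tau_0+d)^k \omega_0^k$ for the appropriate $k$. By the formulas \eqref{eq:rationalityViaEisenstein}, the coefficients $c_0, c_1, c_2$ of $f^{(\tau_0,\omega_0)}$ are expressed through $E_2^*(\tau_0)/\omega_0^2$, $E_4(\tau_0)/\omega_0^4$, $E_6(\tau_0)/\omega_0^6$, and by the recursion \eqref{eq:recursion} these three quantities determine the whole series. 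So it suffices to check that these three ratios are invariant. Using the modular transformation laws \eqref{eq:modularE2} and the modularity of $E_2^*$, together with $\omega_1 = (c\tau_0+d)\omega_0$ (which holds on the nose when $k=2$, and up to a root of unity when $k=4$ or $k=6$), one sees $E_2^*(\tau_1)/\omega_1^2 = E_2^*(\tau_0)/\omega_0^2$ always, while $E_4(\tau_1)/\omega_1^4 = E_4(\tau_0)/\omega_0^4$ requires $\omega_1^4 = (c\tau_0+d)^4\omega_0^4$, and $E_6(\tau_1)/\omega_1^6 = E_6(\tau_0)/\omega_0^6$ requires $\omega_1^6 = (c\tau_0+d)^6\omega_0^6$. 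The point is that in the generic case, $E_4(\tau_0)$ and $E_6(\tau_0)$ are both nonzero, so a fourth-root or sixth-root ambiguity in $\omega_1$ would genuinely change the ratios; the permitted power $k=2$ forbids it. When $\tau_0 \in \mathrm{SL}(2,\ZZ)\sqrt{-1}$ we have $E_6(\tau_0)=0$ by \eqref{eq:i}, so only the $E_4$-ratio needs watching and $k=4$ suffices; when $\tau_0 \in \mathrm{SL}(2,\ZZ)\rho$ we have $E_4(\tau_0)=0$ by \eqref{eq:rho}, so $k=6$ suffices. I would also invoke Proposition~\ref{proposition: real E_k} to know these two orbits are exactly the loci where $E_2^*$ vanishes, which is what makes the trichotomy clean.

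For the converse (ii) $\Rightarrow$ (iii), assume $f^{(\tau_0,\omega_0)}(t) = f^{(\tau_1,\omega_1)}(t)$. Then in particular $c_i(\tau_0,\omega_0) = c_i(\tau_1,\omega_1)$ for $i=0,1,2$, so by \eqref{eq:rationalityViaEisenstein},
\begin{equation*}
\frac{E_2^*(\tau_0)}{\omega_0^2} = \frac{E_2^*(\tau_1)}{\omega_1^2}, \quad \frac{E_4(\tau_0)}{\omega_0^4} = \frac{E_4(\tau_1)}{\omega_1^4}, \quad \frac{E_6(\tau_0)}{\omega_0^6} = \frac{E_6(\tau_1)}{\omega_1^6}.
\end{equation*}
From the last two equalities one extracts the $j$-invariant: $j(\tau) = 1728 \frac{E_4^3}{E_4^3 - E_6^2}$, and the ratios above force $j(\tau_0) = j(\tau_1)$ (the common scalar $\omega_1^2/\omega_0^2$ cancels in $E_4^3/\omega^{12}$ versus $E_6^2/\omega^{12}$). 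Hence $\tau_1 = \gamma\tau_0$ for some $\gamma = \begin{pmatrix} a & b \\ c & d \end{pmatrix} \in \mathrm{SL}(2,\ZZ)$. Writing $\lambda := c\tau_0+d$, modularity gives $E_4(\tau_1) = \lambda^4 E_4(\tau_0)$, $E_6(\tau_1) = \lambda^6 E_6(\tau_0)$, $E_2^*(\tau_1) = \lambda^2 E_2^*(\tau_0)$. Substituting, and setting $\mu := \omega_1/\omega_0$, the three equalities become $E_2^*(\tau_0)(1 - \lambda^2/\mu^2) = 0$, $E_4(\tau_0)(1 - \lambda^4/\mu^4) = 0$, $E_6(\tau_0)(1 - \lambda^6/\mu^6) = 0$. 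In the generic case $E_4(\tau_0) \neq 0 \neq E_6(\tau_0)$, so $\mu^4 = \lambda^4$ and $\mu^6 = \lambda^6$, whence $\mu^2 = \lambda^2$, i.e.\ $k=2$. If $\tau_0 \in \mathrm{SL}(2,\ZZ)\sqrt{-1}$ then $E_6(\tau_0)=0$ and $E_4(\tau_0)\neq 0$, so we only get $\mu^4 = \lambda^4$, i.e.\ $k=4$; similarly $k=6$ in the $\rho$ orbit. This is precisely statement (iii). The main obstacle, and the only place real care is needed, is bookkeeping the root-of-unity ambiguities in $\mu$ versus $\lambda$ and correctly identifying which power $k$ is forced in each of the three cases — everything else is modular-form formalism plus the already-established formulas \eqref{eq:rationalityViaEisenstein} and \eqref{eq:recursion}. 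One should also note $c\tau_0+d \neq 0$ since $\tau_0 \in \HH$, so $\lambda$ is a legitimate nonzero scalar, and that the relation $\omega_1^k = \lambda^k \omega_0^k$ is exactly the one recorded in part (iii) of the statement.
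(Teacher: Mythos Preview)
Your proposal is correct and follows essentially the same approach as the paper: reduce the equality of the $f$-functions to the three ratio identities $E_2^*(\tau_i)/\omega_i^2$, $E_4(\tau_i)/\omega_i^4$, $E_6(\tau_i)/\omega_i^6$ via \eqref{eq:rationalityViaEisenstein}, deduce $j(\tau_0)=j(\tau_1)$ to obtain the ${\rm SL}(2,\ZZ)$-element, and then perform the case analysis on the vanishing of $E_4$, $E_6$ (and $E_2^*$ via Proposition~\ref{proposition: real E_k}) to pin down the exponent $k$. Your write-up is more explicit than the paper's about why $k=4,6$ suffice in the special orbits (namely because $E_2^*$ vanishes there, so the weight-$2$ ratio imposes no constraint), which is a helpful clarification.
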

\begin{proof}
It is almost clear that condition (i) is equivalent to (ii).  
By the equations \eqref{eq:rationalityViaEisenstein}, condition (ii) is equivalent to the equations
\begin{equation}\label{42}
\frac{E_2^*(\tau_0)}{\omega_0^2}=\frac{E_2^*(\tau_1)}{\omega_1^2},\quad 
\frac{E_4(\tau_0)}{\omega_0^4}=\frac{E_4(\tau_1)}{\omega_1^4},\quad 
\frac{E_6(\tau_0)}{\omega_0^6}=\frac{E_6(\tau_1)}{\omega_1^6}.
\end{equation}
The equations \eqref{42} imply that $j(\tau_0)=j(\tau_1)$ and hence
\begin{equation}
\tau_1=\frac{a\tau_0+b}{c\tau_0+d},\quad \text{for some} \quad \begin{pmatrix}
    a & b\\
    c & d
\end{pmatrix} \in {\rm SL}(2, \ZZ).
\end{equation}
Therefore we obtain
\[
\frac{E_2^*(\tau_0)}{\omega_0^2}=\frac{(c\tau_0+d)^2E_2^*(\tau_0)}{\omega_1^2},\  
\frac{E_4(\tau_0)}{\omega_0^4}=\frac{(c\tau_0+d)^4E_4(\tau_0)}{\omega_1^4},\ 
\frac{E_6(\tau_0)}{\omega_0^6}=\frac{(c\tau_0+d)^6E_6(\tau_0)}{\omega_1^6},
\]
which implies, by the use of \eqref{eq:zeros}, \eqref{eq:i} and \eqref{eq:rho},  $\omega_1^k=(c\tau_0+d)^k\omega_0^k$ 
where $k=4$ if $\tau_0\in {\rm SL}(2, \ZZ)\sqrt{-1}$, $k=6$ if $\tau_0\in {\rm SL}(2, \ZZ)\rho$ and $k=2$ otherwise. 
It is easy to show that the condition (iii) yields the equations \eqref{42}. The proposition is proved.
\end{proof}
\subsection{${\rm SL(2,\QQ)}$-action and complex multiplication}
\begin{definition}
  An elliptic curve $\E_{\tau}$ is said to have complex multiplication if its modulus $\tau$ is imaginary quadratic. Namely $\tau \in \mathbb Q(\sqrt{-D})$ for a positive integer $D$.
\end{definition}

A profound result of the theory of elliptic curves is that elliptic curves over $\QQ$ with complex multiplication are easily classified:

\begin{theorem}[cf.\ Paragraph~II.2 in \cite{Si}]\label{prop:13ellipticCurves}
  Up to isomorphism there are only 13 elliptic curves defined over $\QQ$ that have complex multiplication.
\end{theorem}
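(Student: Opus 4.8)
The plan is to reduce the statement to a purely arithmetic classification, namely the finiteness (with explicit list) of elliptic curves over $\QQ$ admitting complex multiplication, which is Theorem~\ref{prop:13ellipticCurves} above. The content to be established is therefore twofold: first that every elliptic curve over $\QQ$ with CM has $j$-invariant in a finite explicit set, and second that these $j$-invariants are realized by exactly $13$ isomorphism classes of curves over $\QQ$.

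First I would recall the classical fact that an elliptic curve $\E_\tau$ has CM if and only if its endomorphism ring $\mathrm{End}(\E_\tau)$ is an order $\mathcal{O}$ in an imaginary quadratic field $\QQ(\sqrt{-D})$, and that the $j$-invariant of such a curve is an algebraic integer whose degree over $\QQ$ equals the class number $h(\mathcal{O})$ of that order. Hence $\E_\tau$ is defined over $\QQ$ (equivalently $j(\tau)\in\QQ$, equivalently $j(\tau)\in\ZZ$) only when $h(\mathcal{O})=1$. The orders of class number one are classically known: they are the maximal orders in $\QQ(\sqrt{-d})$ for $d\in\{1,2,3,7,11,19,43,67,163\}$ together with the four non-maximal orders of conductor $>1$ and class number one, namely $\ZZ[2\sqrt{-1}]$, $\ZZ[\sqrt{-2}]\supsetneq$ wait --- more precisely the orders of discriminant $-12,-16,-27,-28$. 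This gives thirteen discriminants in total, hence thirteen values of the $j$-invariant, each an integer.

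Next I would note that each of these thirteen $j$-invariants is attained by a curve over $\QQ$ (e.g.\ via the standard model $y^2 = 4x^3 - g_2 x - g_3$ with $g_2,g_3\in\QQ$ chosen so that the associated $j$ is the prescribed value; existence is immediate since $j\colon \{(g_2,g_3)\}/\sim\ \to\ \CC$ is surjective and defined over $\QQ$). Two curves over $\QQ$ with the same $j$-invariant are isomorphic over $\overline\QQ$ but possibly twists of one another; however our notion of ``$\E_{\tau_0}$ defined over $\KK$'' (Definition in Subsection~\ref{sec:elliptic curve}) only records the isomorphism class over $\CC$, i.e.\ only the $j$-invariant. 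Thus ``up to isomorphism'' here means up to $\CC$-isomorphism, and the count is exactly the number of $j$-invariants, which is $13$. I would make this interpretive point explicit to avoid the twist ambiguity.

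The main obstacle --- and really the only substantive input --- is the class number one problem: proving that the list of imaginary quadratic orders with class number one is complete, i.e.\ that there are no further CM $j$-invariants lying in $\QQ$. This is a deep theorem (Heegner--Baker--Stark for the maximal orders, plus an elementary reduction for non-maximal ones via the formula relating $h(\mathcal{O})$ to $h$ of the maximal order and the conductor), and it is precisely what Theorem~\ref{prop:13ellipticCurves} packages; accordingly the proof consists of invoking that theorem and then carefully translating between the language of orders/discriminants, of $j$-invariants, and of our Definition of an elliptic curve being defined over $\KK$. The remaining steps --- surjectivity of $j$ over $\QQ$, and integrality of CM $j$-invariants of class number one --- are standard and can be quoted from \cite{Si}.
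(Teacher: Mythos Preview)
The paper does not supply a proof of this theorem at all: it is stated with the attribution ``cf.\ Paragraph~II.2 in \cite{Si}'' and then immediately used, with the thirteen curves simply tabulated in the Appendix. So there is nothing to compare your argument against beyond the citation itself.

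Your outline is the standard route and is essentially correct. Two small remarks. First, the hesitation around $\ZZ[\sqrt{-2}]$ is well founded: that ring is the \emph{maximal} order of $\QQ(\sqrt{-2})$ (discriminant $-8$), so it belongs on the list of nine maximal orders, not among the non-maximal ones; your corrected list of non-maximal discriminants $-12,-16,-27,-28$ is right, and together with the nine fundamental discriminants $-3,-4,-7,-8,-11,-19,-43,-67,-163$ this gives the thirteen. Second, your reading of ``up to isomorphism'' as $\CC$-isomorphism (equivalently, as a count of $j$-invariants) is exactly how the paper uses it: the Definition in Subsection~\ref{sec:elliptic curve} only asks for \emph{some} $g_2,g_3\in\KK$ with $E_{g_2,g_3}\cong \E_{\tau_0}$ over $\CC$, and the Appendix lists one representative per modulus $\tau$. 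So the twist issue you flag is indeed a non-issue in this paper's conventions, and your count of $13$ is the intended one.
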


We give the list of the Weierstrass models of these elliptic curves in the Appendix. 
\begin{corollary}\label{cor:13EllipticModulus}
  The modulus $\tau_0$ of the elliptic curve $\E_{\tau_0}$ with complex multiplication defined over $\QQ$ is in the $\rm SL(2, \CC)$ orbit of one of:
  $$
    \sqrt{-D}, \quad D \in \{ 1, 2, 3, 4 ,7\},
  $$
  or
  $$
    \frac{-1 + \sqrt{-D}}{2}, \quad D \in \{3, 7, 11, 19, 27, 43, 67, 163\}.
  $$
\end{corollary}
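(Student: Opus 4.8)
The plan is to deduce the corollary from Theorem~\ref{prop:13ellipticCurves} together with the classical lattice description of complex elliptic curves. Every elliptic curve over $\CC$ is isomorphic to $\CC/(\ZZ\tau+\ZZ)$ for some $\tau\in\HH$, and two of these are isomorphic precisely when their moduli lie in a single ${\rm SL}(2,\ZZ)$-orbit; since ${\rm SL}(2,\ZZ)\subset{\rm SL}(2,\CC)$, it suffices, for each of the thirteen curves supplied by Theorem~\ref{prop:13ellipticCurves} (whose Weierstrass models are listed in the Appendix), to exhibit one modulus $\tau_0\in\HH$ realizing it and to check that $\tau_0$ lies in the ${\rm SL}(2,\ZZ)$-orbit of one of the thirteen values in the statement. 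Note that $\E_{\tau_0}$ is defined over $\QQ$ if and only if $j(\tau_0)\in\QQ$, by the definition in Subsection~\ref{sec:elliptic curve} and the fact that $\CC$-isomorphic elliptic curves have the same $j$-invariant.

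To locate these moduli I would use the theory of complex multiplication. Write $\E_{\tau_0}\cong\CC/\mathfrak{a}$; if $\E_{\tau_0}$ has complex multiplication, then the endomorphism ring of $\CC/\mathfrak{a}$ is an order $\mathcal{O}$ in the imaginary quadratic field $K=\QQ(\tau_0)$, the lattice $\mathfrak{a}$ is a proper fractional $\mathcal{O}$-ideal, and $j(\tau_0)$ is an algebraic integer with $[\QQ(j(\tau_0)):\QQ]=h(\mathcal{O})$, the class number of $\mathcal{O}$. Hence $j(\tau_0)\in\QQ$ forces $h(\mathcal{O})=1$. The orders of class number one are precisely the thirteen of discriminant $-3,-4,-7,-8,-11,-12,-16,-19,-27,-28,-43,-67,-163$, and each gives exactly one curve over $\QQ$ --- this is the content of Theorem~\ref{prop:13ellipticCurves}. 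As $h(\mathcal{O})=1$ the ideal $\mathfrak{a}$ is homothetic to $\mathcal{O}$, so we may take $\tau_0$ to be a basis element of the lattice $\mathcal{O}=\ZZ+\ZZ\tau_0$. For the maximal order of $K=\QQ(\sqrt{-D})$ this gives $\tau_0=\sqrt{-D}$ if $-D\equiv 2,3\pmod 4$ and $\tau_0=\tfrac{-1+\sqrt{-D}}{2}$ (up to $\tau\mapsto\tau+1$) if $-D\equiv 1\pmod 4$; for the order $\ZZ+f\mathcal{O}_K$ of conductor $f>1$ one takes $\tau_0=f\omega$ with $\omega$ a generator of $\mathcal{O}_K$ and then translates by an integer. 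Carrying this out: the nine maximal orders give $\sqrt{-1}$, $\sqrt{-2}$, $\rho=\tfrac{-1+\sqrt{-3}}{2}$, and $\tfrac{-1+\sqrt{-D}}{2}$ for $D\in\{7,11,19,43,67,163\}$; the conductor-$2$ order of $\QQ(\sqrt{-1})$ (discriminant $-16$) gives $2\sqrt{-1}=\sqrt{-4}$; the conductor-$2$ orders of $\QQ(\sqrt{-3})$ and $\QQ(\sqrt{-7})$ (discriminants $-12$ and $-28$) give $\sqrt{-3}$ and $\sqrt{-7}$ after translating $-1+\sqrt{-3}$ and $-1+\sqrt{-7}$ by $+1$; and the conductor-$3$ order of $\QQ(\sqrt{-3})$ (discriminant $-27$) gives $\tfrac{-1+\sqrt{-27}}{2}$ after translating $\tfrac{3+\sqrt{-27}}{2}$ by $-2$. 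These thirteen values are exactly $\sqrt{-D}$ for $D\in\{1,2,3,4,7\}$ and $\tfrac{-1+\sqrt{-D}}{2}$ for $D\in\{3,7,11,19,27,43,67,163\}$, which proves the corollary.

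The only genuinely deep ingredient is the finiteness assertion itself --- that there are precisely thirteen imaginary quadratic orders of class number one, the Baker--Heegner--Stark theorem --- and it is available to us as Theorem~\ref{prop:13ellipticCurves}, so what remains is bookkeeping rather than a substantive obstacle. The steps that deserve a little care are: (a) matching the $j$-invariant of each of the thirteen Weierstrass models in the Appendix against the corresponding CM order, so as to be sure the thirteen discriminants above are exactly those that occur; (b) the elementary ${\rm SL}(2,\ZZ)$-translations that put the naive generators $f\omega$ of the four non-maximal orders into the normal forms $\sqrt{-D}$ or $\tfrac{-1+\sqrt{-D}}{2}$; and (c) remembering that $\sqrt{-3}$ and $\rho=\tfrac{-1+\sqrt{-3}}{2}$ are distinct ${\rm SL}(2,\ZZ)$-orbits --- the conductor-$2$ and the maximal order of $\QQ(\sqrt{-3})$ respectively --- which is why the integer $D=3$ occurs in both of the sublists. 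A variant avoiding any appeal to class field theory would invoke only the analytic fact that a singular modulus $j(\tau_0)$ is an algebraic integer of degree $h(\mathcal{O})$ over $\QQ$ and then compute $j$ of the thirteen Appendix curves against a standard table of CM points; but this still rests on the same class-number-one classification.
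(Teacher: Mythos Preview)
Your argument is correct. The paper itself gives no proof of this corollary: it is stated immediately after Theorem~\ref{prop:13ellipticCurves}, and the thirteen moduli are simply read off from the first column of the table in the Appendix. In that sense the paper's ``proof'' is purely tabular, relying on Silverman for both the finiteness and the explicit list.

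Your approach is more self-contained and conceptual: you identify the thirteen curves with the thirteen imaginary quadratic orders of class number one (discriminants $-3,-4,-7,-8,-11,-12,-16,-19,-27,-28,-43,-67,-163$), and then compute a generator of each order as a $\ZZ$-lattice to obtain the modulus, normalizing by an integer translation where needed. This explains \emph{why} the list splits into the two shapes $\sqrt{-D}$ and $\tfrac{-1+\sqrt{-D}}{2}$ (maximal versus non-maximal orders, and the residue of $-D$ modulo $4$), and in particular why $D=3$ and $D=7$ each appear twice. The paper buys brevity by outsourcing all of this to the cited reference and the Appendix; your version buys transparency at the cost of invoking a bit more CM machinery than is strictly necessary once the table is in hand. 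Either way the deep input is the Baker--Heegner--Stark theorem behind Theorem~\ref{prop:13ellipticCurves}, as you correctly note.
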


Imaginary quadratic $\tau_0 \in \CC$ are amazing from the point of view of the theory of modular forms too:

\begin{proposition}[cf.\ Theorem~A1 in \cite{Ma}]\label{prop:E2E4E6}
  Let $\tau \in \CC$ be imaginary quadratic and ${\tau \not\in {\rm SL}(2,\ZZ) \sqrt{-1}}$. Then we have:
  $$
    \frac{E_2^*(\tau)E_4(\tau)}{E_6(\tau)} \in \QQ (j(\tau)),
  $$
  where $j(\tau)$ is the value of the $j$-invariant of the elliptic curve $\E_{\tau}$.
\end{proposition}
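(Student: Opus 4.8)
The plan is to realize the quantity $E_2^*(\tau)E_4(\tau)/E_6(\tau)$ as the value at $\tau$ of a meromorphic modular function, and then invoke the classical fact that such values at imaginary quadratic points lie in $\QQ(j(\tau))$. First I would recall that $E_4^3 - E_6^2 = \frac{1728}{(2\pi)^{12}} \cdot (2\pi)^{12}\Delta$ (up to the standard normalization), so $\Delta$ never vanishes on $\HH$ and $j = E_4^3/\Delta$. Since $\tau$ is imaginary quadratic and $\tau \not\in \mathrm{SL}(2,\ZZ)\sqrt{-1}$, we have $j(\tau) \neq 1728$, hence $E_6(\tau) \neq 0$; and if moreover $\tau \not\in \mathrm{SL}(2,\ZZ)\rho$ then $E_4(\tau) \neq 0$ as well, so the expression is a well-defined nonzero complex number. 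The case $\tau \in \mathrm{SL}(2,\ZZ)\rho$ is then handled separately: there $E_4(\tau) = 0$ and $E_2^*(\tau) \neq 0$ (by Proposition~\ref{proposition: real E_k}), so the ratio equals $0 \in \QQ(j(\tau))$ trivially.

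So assume $E_4(\tau)E_6(\tau) \neq 0$. The key step is to write
\begin{equation*}
\frac{E_2^*(\tau)E_4(\tau)}{E_6(\tau)}
= \frac{E_2^*(\tau)E_4(\tau)}{E_6(\tau)} \cdot \frac{E_6(\tau)}{E_6(\tau)}
= E_2^*(\tau) \cdot \frac{E_4(\tau)}{E_6(\tau)}.
\end{equation*}
Now $E_2^* $ is an almost holomorphic modular form of weight $2$, $E_4/E_6$ transforms with weight $4 - 6 = -2$, so the product $E_2^* \cdot E_4/E_6$ is invariant under $\mathrm{SL}(2,\ZZ)$; being a ratio of (almost) holomorphic modular forms it is meromorphic on $\HH$ and at the cusp, i.e.\ it is a meromorphic modular function — except that it is only \emph{almost} holomorphic, not holomorphic, because of the $\mathrm{Im}(\tau)^{-1}$ term in $E_2^*$. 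To remove this, I would use the Ramanujan-type identity $\p E_2^* = \frac{1}{12}(E_2^{*2} - E_4)$ (stated in the excerpt) together with the standard identity expressing the holomorphic derivative: writing $\p j$ for the weight-$2$ almost holomorphic derivative applied to the weight-$0$ form $j$, one has the classical formula $\p j = -E_6 E_4^2/\Delta = -E_2^* \cdot (\text{weight }{-2}\text{ holomorphic modular function}) + (\text{holomorphic})$; more precisely, the Serre derivative $\vartheta_0 j := \frac{1}{2\pi\sqrt{-1}}\frac{dj}{d\tau} = -\frac{E_4^2 E_6}{\Delta}$ is a genuine meromorphic modular form of weight $2$, and the difference $\vartheta_0 j - \p j = \frac{2}{4\pi\mathrm{Im}(\tau)} \cdot (-j) \cdot (\text{const})$ carries exactly the non-holomorphic part, so $E_2^* \cdot j$ can be expressed as a $\QQ$-linear combination of $\vartheta_0 j$ and $\frac{1}{2\pi\sqrt{-1}}\frac{dj}{d\tau}$ divided by appropriate polynomials in $j$. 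Rearranging these identities exhibits $E_2^* \cdot E_4/E_6$ as $P(j) + Q(j)\cdot \frac{1}{2\pi\sqrt{-1}}\frac{d}{d\tau}\log(\text{something})$ with $P, Q \in \QQ(j)$.

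The main obstacle — and the point where one must be careful — is precisely this separation of the holomorphic and non-holomorphic parts: the ratio $E_4/E_6$ alone is a meromorphic modular function of weight $-2$, not weight $0$, so one cannot directly apply the theorem that modular-function values at CM points are algebraic over $\QQ(j)$. The trick is that $E_2^*$ is the \emph{unique} (up to scalar) weight-$2$ almost holomorphic modular form with holomorphic part $E_2$, and the logarithmic derivative $\frac{1}{2\pi\sqrt{-1}}\frac{d}{d\tau}\log\Delta = E_2$ relates it to the genuinely meromorphic object $\frac{1}{2\pi\sqrt{-1}}\frac{d\log j}{d\tau}$, whose value at an imaginary quadratic $\tau$ lies in $\QQ(j(\tau))$ by the theory of complex multiplication (this is essentially the statement that periods and quasi-periods of CM elliptic curves generate, after the appropriate ratios, abelian extensions — but at the level needed here it reduces to: $j'(\tau)/(2\pi\sqrt{-1})$ divided by a suitable power of a period is algebraic, and ratios of such quantities lie in $\QQ(j(\tau))$). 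Concretely I would: (1) establish the algebraic identity $E_2^* E_4/E_6 = a(j) + b(j)\cdot \frac{\vartheta_0 j}{j \text{ or } (j-1728)}$ with $a, b \in \QQ(j)$ by pure weight-bookkeeping and the Ramanujan/Serre-derivative identities; (2) note $\vartheta_0 j = -E_4^2 E_6/\Delta$ is a weight-$2$ meromorphic modular \emph{form}, but the ratio $\vartheta_0 j \,/\, E_2^*$ is weight $0$ and meromorphic, reducing the claim to $\vartheta_0 j \,/\, E_2^* \in \QQ(j(\tau))$ at CM points; (3) recognize, via the logarithmic-derivative relation $E_2 = \frac{1}{2\pi\sqrt{-1}}(\log\Delta)'$, that $E_2^*$ is the "non-holomorphic completion" appearing in Zagier's theory (reference~\cite{Z}), whence the value $E_2^*(\tau)$ at imaginary quadratic $\tau$ is, up to an algebraic factor, a period squared, and the weight-$0$ ratio in (2) is algebraic over $\QQ$ — in fact in $\QQ(j(\tau))$ — by Shimura's reciprocity. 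Alternatively, and perhaps more cleanly, I would simply cite \cite{Ma}, Theorem~A1, as the excerpt already signals, and restrict the proof here to deriving the two special-point reductions ($\mathrm{SL}(2,\ZZ)\sqrt{-1}$ excluded, $\mathrm{SL}(2,\ZZ)\rho$ trivial) and checking that the expression is well-defined off these orbits.
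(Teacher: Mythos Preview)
The paper does not prove this proposition at all: it is stated with the attribution ``cf.\ Theorem~A1 in \cite{Ma}'' and no argument is given. So there is nothing to compare against except the citation itself, and your own closing remark --- ``Alternatively, and perhaps more cleanly, I would simply cite \cite{Ma}, Theorem~A1'' --- is exactly what the paper does.

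As for the sketch you offer before that concession, it does not constitute a proof, and it contains a factual slip. You write that at $\tau\in{\rm SL}(2,\ZZ)\rho$ one has $E_2^*(\tau)\neq 0$ ``by Proposition~\ref{proposition: real E_k}'', but that proposition says precisely the opposite: $E_2^*(\tau)=0$ on the ${\rm SL}(2,\ZZ)$-orbits of both $\sqrt{-1}$ and $\rho$. (The conclusion that the ratio vanishes there is still correct, since $E_4(\rho)=0$ and $E_6(\rho)\neq 0$.) More seriously, the main line of argument does not close. The product $E_2^*\cdot E_4/E_6$ is ${\rm SL}(2,\ZZ)$-invariant of weight $0$, but it is \emph{almost} holomorphic, not meromorphic, so the classical theorem that values of meromorphic modular functions at CM points lie in $\QQ(j(\tau))$ does not apply to it directly. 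Your steps (1)--(3) try to peel off the non-holomorphic part, but the identities you invoke are not written down, the weight bookkeeping in (1) is left implicit, and the appeal in (3) to ``Shimura's reciprocity'' and period/quasi-period algebraicity is exactly the nontrivial input that Masser's theorem packages --- you have relocated the difficulty rather than discharged it. The genuine mechanism (which is what \cite{Ma} carries out) uses the CM endomorphism: an integral $2\times 2$ matrix of determinant $>1$ fixing $\tau$ yields, via the transformation law of $E_2^*$ and the modular equation for $j$, an algebraic relation that forces the ratio into $\QQ(j(\tau))$. Your sketch gestures toward this but does not execute it, so the honest move is the one both you and the paper make: cite Masser.
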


\begin{definition}\label{defn:symmetry}
Let $\tau_0 \in \HH$, $\omega_0 \in \CC \backslash \{0\}$.
\begin{enumerate}
\item
The Frobenius manifold $M^{(\tau_0,\omega_0)}$ is said to {\em have a symmetry} if there exists an element $A\in {\rm SL}(2,\RR)\setminus \{1\}$ such that 
\[
\left(f^{(\tau_0, \omega_0)}\right)^A(t) = f^{(\tau_0, \omega_0)}(t).
\]
\item
The Frobenius manifold $M^{(\tau_0,\omega_0)}$ is said to {\em have a weak symmetry} if there exists an element $A\in {\rm SL}(2,\RR)\setminus \{1,-1\}$ such that 
\[
\left(f^{(\tau_0, \omega_0)}\right)^A(t) = f^{(\tau_0, \omega_0^\prime)}(t)\quad \text{for some }\omega_0^\prime \in \CC \backslash \{0\}.
\]
\end{enumerate}
\end{definition}

\begin{remark}It is important to note that weak symmetry is not a symmetry of the Frobenius manifold unless $\omega_0 = \omega_0^\prime$, because the corresponding $A$-action relates different points in the space of all Frobenius manifolds of rank three .
\end{remark}

\begin{theorem}\label{theorem2} 
  Let $\tau_0 \in \HH$ and $\omega_0 \in \CC \backslash \{0\}$.
  \begin{enumerate}
    \item The Frobenius manifold $M^{(\tau_0, \omega_0)}$ has a symmetry if and only if $\tau_0$ is in the $\rm SL(2, \ZZ)$ orbit of $\sqrt{-1}$ or $\rho$.
    \item The Frobenius manifold $M^{(\tau_0, \omega_0)}$ defined over $\QQ$ has a weak symmetry if and only if $\tau_0$ is from the list given in Corollary~\ref{cor:13EllipticModulus}.
  \end{enumerate}
\end{theorem}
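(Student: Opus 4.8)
The plan is to translate both notions into statements about the ${\rm SL}(2,\ZZ)$-orbit of $\tau_0$ together with the (transcendence, resp.\ rationality) nature of the period $\omega_0$, and then to feed in the arithmetic facts recalled above. For $A=\begin{pmatrix}a&b\\c&d\end{pmatrix}\in{\rm SL}(2,\RR)$, the computation opening Section~\ref{sec:Symmetries} shows that acting by $A$ replaces $(\tau_0,\omega_0)$ by $(\tau_1,\omega_1)$ with $\tau_1=\frac{a\tau_0+b}{c\tau_0+d}$, $\omega_1=(c\tau_0+d)\omega_0$; hence, by Proposition~\ref{proposition: SL2Z action}, $(f^{(\tau_0,\omega_0)})^{A}=f^{(\tau_0,\omega_0)}$ (resp.\ $=f^{(\tau_0,\omega_0')}$ for some $\omega_0'$) precisely when $(\tau_1,\omega_1)$ and $(\tau_0,\omega_0)$ (resp.\ $(\tau_0,\omega_0')$) are ${\rm SL}(2,\ZZ)$-related in the sense of that proposition. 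Writing $A=\gamma R$ with $\gamma\in{\rm SL}(2,\ZZ)$ and $R$ fixing $\tau_0$, multiplicativity of the automorphy factor turns the constraint on the $\omega$'s into one on the unit complex number $j(R,\tau_0)$: a symmetry forces $j(R,\tau_0)^{k}=1$, a weak symmetry forces $j(R,\tau_0)$ into a prescribed real ray determined by $\omega_0$ and $\gamma$, where $k=k(\tau_0)\in\{2,4,6\}$ is the integer of Proposition~\ref{proposition: SL2Z action}. Since $R\mapsto j(R,\tau_0)$ identifies ${\rm Stab}_{{\rm SL}(2,\RR)}(\tau_0)$ with $U(1)$, a genuinely new element beyond $R=\pm 1$ is available exactly when $k(\tau_0)>2$, i.e.\ by the definition of $k$, exactly when $\tau_0\in{\rm SL}(2,\ZZ)\sqrt{-1}$ or $\tau_0\in{\rm SL}(2,\ZZ)\rho$. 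This proves (1), and Proposition~\ref{proposition: real E_k} lets one rephrase the exceptional condition as $E_2^*(\tau_0)=0$.

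For (2) I would argue that if $M^{(\tau_0,\omega_0)}$ is defined over $\QQ$ then, by Theorem~\ref{theorem1}, $E_2^*(\tau_0)\in\QQ\omega_0^2$, $E_4(\tau_0)\in\QQ\omega_0^4$, $E_6(\tau_0)\in\QQ\omega_0^6$; substituting this into the weak-symmetry constraint of the previous paragraph and clearing the $\omega_0$'s turns it into a relation purely among $E_2^*(\tau_0),E_4(\tau_0),E_6(\tau_0)$ and algebraic numbers. This forces $\tau_0$ to be imaginary quadratic and, via Theorem~\ref{theorem1}(v), the elliptic curve $\E_{\tau_0}$ to be defined over $\QQ$. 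The decisive arithmetic input is then Proposition~\ref{prop:E2E4E6}: for imaginary quadratic $\tau_0$ with $\tau_0\notin{\rm SL}(2,\ZZ)\sqrt{-1}$ (the case $\tau_0\in{\rm SL}(2,\ZZ)\sqrt{-1}$ already being covered by (1)) the ratio $E_2^*(\tau_0)E_4(\tau_0)/E_6(\tau_0)$ lies in $\QQ(j(\tau_0))$, which makes the otherwise transcendental-looking reality/rationality constraints consistent precisely when $\E_{\tau_0}$ has complex multiplication. By Theorem~\ref{prop:13ellipticCurves} and Corollary~\ref{cor:13EllipticModulus} this leaves exactly the thirteen values of $\tau_0$ listed there. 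Conversely, for each of these one uses that $\E_{\tau_0}$ has CM to produce an element $\gamma\in{\rm SL}(2,\ZZ)$ (from an endomorphism of $\E_{\tau_0}$) and an $\omega_0'$ for which the reality property holds, exhibiting the required $A$; the explicit Weierstrass data in the Appendix is presumably used to carry this out.

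The step I expect to be the main obstacle is the bookkeeping in the second paragraph. One must track precisely the ambiguity in $\omega_0$ — a fourth, resp.\ sixth, root of unity for the two exceptional orbits, but only a sign in general — and, for (2), prove that the ${\rm SL}(2,\RR)$-reality condition coming from $\gamma R$ and the $\QQ$-rationality condition of Theorem~\ref{theorem1} can hold simultaneously \emph{only} through the complex-multiplication relation of Proposition~\ref{prop:E2E4E6}, and that conversely every one of the thirteen moduli really does admit such $A$ and $\omega_0'$. Isolating which $\gamma\in{\rm SL}(2,\ZZ)$ can occur (and ruling out the infinitely many a priori candidates when $\tau_0$ is not CM) is the technical heart of the argument.
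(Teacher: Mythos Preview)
For part~(i) your argument matches the paper's: both use Proposition~\ref{proposition: SL2Z action} to reduce the existence of a symmetry to finding a stabilizing element whose automorphy factor is a $k(\tau_0)$-th root of unity, and both conclude by noting that non-trivial such elements exist only when $k(\tau_0)\in\{4,6\}$, i.e.\ when $\tau_0\in{\rm SL}(2,\ZZ)\sqrt{-1}\cup{\rm SL}(2,\ZZ)\rho$.

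For part~(ii) your forward direction takes a different and more circuitous route than the paper, and this is where a gap appears. You attempt to deduce that $\tau_0$ is imaginary quadratic by combining the rationality conditions $E_2^*(\tau_0)\in\QQ\omega_0^2$, $E_4(\tau_0)\in\QQ\omega_0^4$, $E_6(\tau_0)\in\QQ\omega_0^6$ with your constraint on the unit complex number $j(R,\tau_0)$, hoping that ``clearing the $\omega_0$'s'' yields a relation that forces CM via some unstated transcendence principle. But since $R$ ranges over the full real stabilizer of $\tau_0$ (a circle), that constraint is vacuous as written, and you never explain what relation is supposed to emerge. The paper's argument is much more direct and avoids Eisenstein values entirely: feeding the weak-symmetry hypothesis through Proposition~\ref{proposition: SL2Z action} already produces an element $\begin{pmatrix}a&b\\c&d\end{pmatrix}\in{\rm SL}(2,\ZZ)$, different from $\pm I$, with $\dfrac{a\tau_0+b}{c\tau_0+d}=\tau_0$. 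That is a quadratic equation for $\tau_0$ with integer coefficients and negative discriminant, so $\tau_0$ is imaginary quadratic immediately; Theorem~\ref{theorem1}(v) and Theorem~\ref{prop:13ellipticCurves} then finish the forward direction.

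You have also inverted the role of Proposition~\ref{prop:E2E4E6}. In the paper it is used only in the \emph{converse}: once $\tau_0$ is one of the thirteen CM points, one must exhibit an $\omega_0$ with $E_2^*(\tau_0)\in\QQ\omega_0^2$, and since $E_4(\tau_0)\in\QQ\omega_0^4$, $E_6(\tau_0)\in\QQ\omega_0^6$ already follow from $\E_{\tau_0}$ being defined over $\QQ$, the rationality of $E_2^*(\tau_0)E_4(\tau_0)/E_6(\tau_0)$ is exactly what ties $E_2^*(\tau_0)$ to the same $\omega_0$. It plays no part in forcing CM, and the ``technical heart'' you anticipate (isolating which $\gamma$ can occur, ruling out non-CM $\tau_0$ by bookkeeping) simply does not arise in the paper's proof.
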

\begin{proof}
  From Proposition~\ref{proposition: SL2Z action} $M^{(\tau_0, \omega_0)}$ has a symmetry if and only if $\frac{a\tau_0+b}{c\tau_0+d}= \tau_0$ and
  $$
    \omega_0^4 = (c \tau_0 + d)^4 \omega_0^4 \quad \text{for} \quad \tau_0 \in \rm SL(2 ,\ZZ) \sqrt{-1},
  $$
  or
  $$
    \omega_0^6 = (c \tau_0 + d)^6 \omega_0^6 \quad \text{for} \quad \tau_0 \in \rm SL(2 ,\ZZ) \rho,
  $$
  or otherwise
  $$
    \omega_0^2 = (c \tau_0 + d)^2 \omega_0^2.
  $$
  The last equation is satisfied if and only if $(c \tau_0 + d)^2 = 1$. It has no solutions for $\tau_0 \in \HH$ and $c,d \in \ZZ$. It is an easy exercise to show that there is a suitable $A \in \rm SL(2, \ZZ)$ solving the first two equations. This proves (i).

  Let $M^{(\tau_0, \omega_0)}$ be defined over $\QQ$ and have a weak symmetry. By Theorem~\ref{theorem1} the elliptic curve $\E_{\tau_0}$ is defined over $\QQ$.

  Due to Proposition~\ref{proposition: SL2Z action} we have $\frac{a\tau_0+b}{c\tau_0+d} = \tau_0$. It is an easy exercise to show that $\tau_0$ satisfies a quadratic equation with negative discriminant. Hence the elliptic curve $\E_{\tau_0}$ has complex multiplication. From Proposition~\ref{prop:13ellipticCurves} we know that there are only 13 such $\tau_0$ up to the $\rm SL(2, \ZZ)$-action.
  Hence $\tau_0$ is from the given list.

  Assume that $\tau_0$ is the modulus of one of the elliptic curves from this list. From the rationality assumption on the elliptic curve $\E_{\tau_0}$ we have $j(\tau_0) \in \QQ$. 
  The case of $\tau_0 = {\rm SL}(2, \ZZ) \sqrt{-1}$ was treated in Example~\ref{examples} and we can apply Proposition~\ref{prop:E2E4E6}. Its statement reads:
  $$
    \frac{E_2^*(\tau_0)E_4(\tau_0)}{E_6(\tau_0)} \in \QQ.
  $$

  At the same time, since the elliptic curve is defined over $\QQ$, there exists $a \in \CC \backslash \{0\}$ such that:
  $$
    a^2  g_2(\tau_0) \in \QQ, \quad a^3 g_3(\tau_0) \in \QQ.
  $$

  From the equations~\eqref{eq:g to Eisentein} we have:
  $$
    a^2 \pi^4 E_4(\tau_0) = a^2 g_2(\tau_0) \frac{3}{4} \in \QQ, \quad a^3 \pi^6 E_6(\tau_0) = a^3 g_3(\tau_0) \frac{27}{8 } \in \QQ.
  $$

  We conclude:
  $$
    a \pi^2 E_2^*(\tau_0) \in \QQ.
  $$
  Summing up:
  $$
    E_2^*(\tau_0) \in \QQ (a \pi^2)^{-1}, \quad E_4(\tau_0) \in \QQ (a\pi^2)^{-2}, \quad E_6(\tau_0) \in \QQ (a \pi^2)^{-3}.
  $$
  Taking $\omega_0^2 := (a \pi^2)^{-1}$ we get $M^{(\tau_0,\omega_0)}$ defined over $\QQ$ because of Theorem~\ref{theorem1}.
\end{proof}

  Note that $\E_{\tau_0}$ for $\tau_0 \in \rm SL(2, \ZZ) \sqrt{-1}$ and $\tau_0 \in \rm SL(2, \ZZ) \rho$ are only elliptic curves with non-trivial automorphisms.
\begin{remark}
  We can rephrase Theorem \ref{theorem2} (i) above as: a Frobenius manifold $M^{(\tau_0, \omega_0)}$ has a symmetry if and only if $\E_{\tau_0}$ has non-trivial automorphisms.
\end{remark}

\section{Appendix}\label{Appendix} 

  13 elliptic curves over $\mathbb Q$ with complex multiplication.
  
  \begin{tabular}{ c | c | c | c}\label{table:13ellipticCurves}
    Modulus $\tau$ & Weierstrass equation & $j$-invariant & $\Delta_E$
    \\
    \hline
    $(-1 + \sqrt{-3})/2$ & $y^2 = 4x^3 + 1$ & $0$ & $3^3$
    \\
    $\sqrt{-3}$ & $y^2 = 4x^3 - 60 x + 88$ & $2^43^35^3$ & $2^83^3$
    \\
    $(-1 + 3\sqrt{-3})/2$ & $y^2 = 4x^3 - 120x + 253$ & $-2^{15} 3 5^3$ & $3^5$
    \\
    $\sqrt{-1}$ & $y^2 = 4x^3 + 4x$ & $2^63^3$ & $2^5$
    \\
    $2\sqrt{-1}$ & $ y^2 = 4 x^3 - 44 x + 64$ & $2^3 3^3 11^3$ & $2^9$
    \\
    $(-1 + \sqrt{-7})/2$ & $ y^2 = 4 x^3 - \frac{35}{4} x - \frac{49}{8}$ & $-3^3 5^3$ & $7^3$
    \\
    $\sqrt{-7}$ & $y^2 = 4 x^3 - 2380 x + 22344$ & $3^3 5^3 17^3$ & $2^{12} 7^3$
    \\
    $\sqrt{-2}$ & $y^2 = 4 x^3 - 120 x + 224$ & $2^6 5^3$ & $2^9$
    \\
    $(-1 + \sqrt{-11})/2$ & $y^2 = 4 x^3 - \frac{88}{3}x - \frac{847}{27} $ & $-2^{15}$ & $11^3$
    \\
    $(-1 + \sqrt{-19})/2$ & $y^2 = 4 x^3 - 152 x + 361$ & $-2^{15}3^3$ & $19^3$
    \\
    $(-1 + \sqrt{-43})/2$ & $y^2 = 4 x^3 - 3440 x + 38829$ & $-2^{18}3^3 5^3$ & $43^3$
    \\
    $(-1 + \sqrt{-67})/2$ & $y^2 = 4 x^3 - 29480 x + 974113$ & $-2^{15} 3^3 5^3 11^3$ & $67^3$
    \\
    $(-1 + \sqrt{-163})/2$ & $y^2 = 4 x^3 - 8697680 x + 4936546769$ & $-2^{18} 3^3 5^3 23^3 29^3$ & $163^3$
  \end{tabular}
  
  Introduce the notation:
  $$
    \psi(\tau) := \frac{3 E_2^*(\tau) E_4(\tau) }{2E_6(\tau)}.
  $$
  The values of this function where computed in \cite{Ma}. Using $g_2,g_3$ given by the Weierstrass forms of the previous table we compute $c_0(\tau,\omega)$, $c_1(\tau,\omega)$, $c_2(\tau,\omega)$ for some choice of $\omega$:

  \begin{tabular}{ c | c| c | c | c}\label{table:13ellipticCurves}
    Modulus $\tau$ & $\psi(\tau)$ & $c_0(\tau,\omega)$ & $c_1(\tau,\omega)$ & $c_2(\tau,\omega)$
    \\
    \hline
    $(-1 + \sqrt{-3})/2$ & $0$ & $0$ & $0$ & $-1/256$
    \\
    $\sqrt{-3}$ & $15 / 22$ & $1/16$ & $- 21/128 $ & $- 115/512 $
    \\
    $(-1 + 3\sqrt{-3})/2$ & $240/253$ & $ 1/8 $ & $- 11/32$ & $- 129/256$
    \\
    $\sqrt{-1}$ & $\infty$ & $0$ & $1/96$ & $0$
    \\
    $2\sqrt{-1}$ & $11/14$ & $ 1/16$ & $- 47/384$ & $- 67/512$
    \\
    $(-1 + \sqrt{-7})/2$ & $5/14$ & $-1/64$ & $- 143/6144$ & $ 643/32768 $
    \\
    $\sqrt{-7}$ & $255 / 266$ & $ 9/16$ & $- 2623/384 $ & $- 22539/512$
    \\
    $\sqrt{-2}$ & $15/28$ & $1/48$ & $-41/1152$ & $- 109/4608$
    \\
    $(-1 + \sqrt{-11})/2$ & $ 48/77$ & $ 1/24$ & $- 23/288$ & $- 193/2304$
    \\
    $(-1 + \sqrt{-19})/2$ & $ 16/19$ & $1/8$ & $-41/96$ & $-205/256$
    \\
    $(-1 + \sqrt{-43})/2$ & $320/301$ & $3/4$ & $-121/12$ & $-17325/256$
    \\
    $(-1 + \sqrt{-67})/2$ & $ 16720/14539$ & $19/8$ & $-8453/96$ & $-386557/256$
    \\
    $(-1 + \sqrt{-163})/2$ & $38632640/30285563$ & $181/4$ & $-80236/3$ & $-1598234897/256$
  \end{tabular}

\end{document}